\numberwithin{equation}{section}
\newenvironment{proof2.1}{\medskip\noindent{\bf Proof of the Theorem 2.1:}\enspace}{\hfill \qed \newline \medskip}
\newenvironment{proof2.2}{\medskip\noindent{\bf Proof of the Theorem 2.2:}\enspace}{\hfill \qed \newline \medskip}
\newtheorem{theorem}{\color{black}\indent Theorem}[section]
\newtheorem{lemma}{\color{black}\indent Lemma}[section]
\newtheorem{proposition}{\color{black}\indent Proposition}[section]
\newtheorem{definition}{\color{black}\indent Definition}[section]
\newtheorem{remark}{\color{black}\indent Remark}[section]
\begin{document}
\title{Existence and multiplicity of solutions for a critical Kirchhoff type elliptic equation with a logarithmic perturbation}
\author{Qian Zhang\qquad Yuzhu Han$^{\dag}$}

\affil{School of Mathematics, Jilin University,
 Changchun 130012, P.R. China}
\renewcommand*{\Affilfont}{\small\it}
\date{} \maketitle
\vspace{-20pt}

\footnotetext{\hspace{-1.9mm}$^\dag$Corresponding author.\\
Email addresses: yzhan@jlu.edu.cn(Y. Han).

\thanks{
$^*$Supported by the National Key Research and Development Program of China
(grant no.2020YFA0714101).}}

{\bf Abstract}
In this paper, we are interested in the following critical Kirchhoff type elliptic equation with
a logarithmic perturbation
\begin{equation}\label{eq0}
\begin{cases}
-\left(1+b\int_{\Omega}|\nabla{u}|^2\mathrm{d}x\right) \Delta{u}=\lambda u+\mu u\log{u^2}+|u|^{2^{*}-2}u,
&x\in\Omega,\\
u=0,&x\in\partial\Omega,
\end{cases}
\end{equation}
where $\Omega$ is a bounded domain in $\mathbb{R}^{N}(N\geq3)$ with smooth boundary $\partial \Omega$,
$b$, $\lambda$ and $\mu$ are parameters and $2^{*}=\frac{2N}{N-2}$ is the critical Sobolev exponent.
The presence of a nonlocal term, together with a critical nonlinearity and a logarithmic term, prevents
to apply in a straightforward way the classical critical point theory. Moreover, the geometry structure
of the energy functional changes as the space dimension $N$ varies, which has
a crucial influence on the existence of solutions to the problem. On the basis of some careful analysis
on the structure of the energy functional, existence and (or) multiplicity results are obtained by
using variational methods. More precisely, if $N=3$, problem \eqref{eq0} admits a local minimum solution,
a ground state solution and a sequence of solutions with their $H_0^1(\Omega)$-norms converging to $0$.
If $N=4$, the existence of infinitely many solutions is also obtained. When $N\geq5$, problem \eqref{eq0}
admits a local minimum solution with negative energy. Sufficient conditions are also derived for the
local minimum solution to be a ground state solution.

{\bf Keywords} Multiplicity; Kirchhoff; Critical; Logarithmic term; Truncation;  Ekeland's variational principle;
Symmetric mountain pass lemma.

{\bf AMS Mathematics Subject Classification 2020:} Primary 35J20; Secondary 35J62.

\section{Introduction and the main results}

In this paper, we consider the following critical Kirchhoff type elliptic problem with a logarithmic
perturbation
\begin{equation}\label{eq1}
\begin{cases}
-\left(1+b\int_{\Omega}|\nabla{u}|^2\mathrm{d}x\right) \Delta{u}=\lambda u+\mu u\log{u^2}+|u|^{2^{*}-2}u,
&x\in\Omega,\\
u=0,&x\in\partial\Omega,
\end{cases}
\end{equation}
where $\Omega$ is a bounded domain in $\mathbb{R}^N(N\geq3)$ with a smooth boundary $\partial\Omega$,
$b>0$, $\lambda \in\mathbb{R}$, $\mu<0$, and $2^{*}=\frac{2N}{N-2}$ is the critical Sobolev exponent
for the embedding $H_0^1(\Omega)\hookrightarrow L^{p}(\Omega)$.

Problem \eqref{eq1} is closely related to the stationary state of the following Kirchhoff type equation
\begin{align}\label{kirchhoff1}
\rho\frac{\partial^2 u}{\partial t^2}-\left(\frac{P_0}{h}+\frac{E}{2L}\int_0^L\left|\frac{\partial u}
{\partial x}\right|^2\mathrm{d}x\right)
\frac{\partial^2 u}{\partial x^2}=f(x,u),
\end{align}
proposed by Kirchhoff \cite{Kirchhoff1883} as a generalization of the classical d'Alembert's
wave equation by considering the effects of the changes in the length of the strings during
the vibrations. Here $\rho$ is the mass density, $P_0$ is the initial tension, $h$ is the area
of cross-section, $E$ is the Young modulus of the material, $L$ is the length of the string
and $f$ is the external force. Kirchhoff equations, also referred to as nonlocal equations,
have their origins in the theory of nonlinear vibration and have been widely applied in physics,
biological, engineering and other fields. For more details on the background of Kirchhoff type
equations, we refer the interested readers to \cite{Arosio,Cousin,Herrmann2024} and the references
therein.

During the past few decades, the analysis of the stationary problems of equation \eqref{kirchhoff1}
with different nonlinearities has been receiving considerable attention. When the nonlinearity is
of the form $f(x,s)=|s|^{2^{*}-2}s$, the interaction between the nonlocal term and the critical term
prevents to apply in a straightforward way the classical critical point theory and makes the study
of such problems not only challenging but also interesting. In view of this, many authors have devoted
themselves to the study of Kirchhoff type elliptic equations and remarkable progress has been made on
the existence, nonexistence and multiplicity of weak solutions by using variational and topological
methods. In particular, problem \eqref{eq1} with $\mu=0$, i.e., the following problem
\begin{equation}\label{eq2}
\begin{cases}
-\left(1+b\int_{\Omega}|\nabla{u}|^2\mathrm{d}x\right) \Delta{u}=\lambda u+|u|^{2^{*}-2}u,&x\in\Omega,\\
u=0,&x\in\partial\Omega
\end{cases}
\end{equation}
has been investigated by several authors. For example, when $N=3$, Naimen \cite{Naimen-3} considered the
existence of solutions to problem \eqref{eq2} by regarding $\lambda$ as a given constant and $b$ as a
parameter. He showed that if $\lambda\leq\lambda_1/4$, problem \eqref{eq2} has no solution for all $b>0$.
If $\lambda_1/4<\lambda\leq \lambda_1$, problem \eqref{eq2} has a mountain pass type solution for $b>0$
suitably small while it has no solution for $b>0$ sufficiently large. If $\lambda>\lambda_1$, problem
\eqref{eq2} has a local minimum solution when $b>0$ is sufficiently large. It may also admit a mountain
pass type solution when $\lambda$ lies in a right neighborhood of $\lambda_1$ and $b$ fulfills some
additional conditions. Here $\lambda_1$ is the first eigenvalue of $-\Delta$ in $H_0^1(\Omega)$.
When $N=4$, Naimen \cite{Naimen-4} proved that if $0<\lambda <\lambda_1$, problem \eqref{eq2} possesses
a mountain pass type solution if and only if $bS^2<1$, where $S>0$ is the best Sobolev embedding constant
given in \eqref{S}. In the high dimension case $N\geq5$, Naimen et al. \cite{Naimen-5} showed that
problem \eqref{eq2} admits at least two solutions (one is a local minimum solution and the other is a
mountain pass type solution) when $0<\lambda<\lambda_1$ and $b>0$ is suitably small. For more results
on the existence, nonexistence and multiplicity of solutions to Kirchhoff type elliptic equations
with other nonlinearities, interested readers may refer to \cite{Faraci,F-2013,Li2012,Perera2006,Wang2012,
zhang2024,Figueiredo2025,Ambrosio2024} and the references therein.

On the other hand, partial differential equations (PDEs for short) with logarithmic type perturbation
have also been investigated extensively in recent years. One reason is that PDEs involving logarithmic
terms can be used to describe many interesting phenomena in physics and other applied sciences such
as the viscoelastic mechanics and quantum mechanics theory, see \cite{Squassina, Shuaiwei2023}.
Another reason is that different from the power type nonlinearity, the sign of the logarithmic term $u\log u^2$
is indefinite for $u>0$ and it does not satisfy the standard Aimbrosetti-Rabinowitz
condition. Moreover, the logarithmic term in $\eqref{eq1}$ has the following properties
\begin{align*}
\lim_{u\rightarrow0^{+}}\frac{u\log u^2}{u}=-\infty, \ \ \ \ \
\ \ \ \lim_{u\rightarrow +\infty}\frac{u\log u^2}{|u|^{2^{*}-2}u}=0,
\end{align*}
which means that $u=o(u\log u^2)$ for $u$ very close to $0$, and that the logarithmic term $u\log u^2$
is a lower-order term at infinity compared with the critical term $|u|^{2^{*}-2}u$. All these properties
make the structure of the corresponding functional more complicated and bring essential difficulties in
looking for weak solutions to such problems in the variational framework. When $b=0$,
problem \eqref{eq1} is reduced to the following Br\'{e}zis-Nirenberg problem with a logarithmic
perturbation
\begin{equation}\label{eq3}
\begin{cases}
-\Delta{u}=\lambda u+\mu u\log{u^2}+|u|^{2^{*}-2}u,&x\in\Omega,\\
u=0,&x\in\partial\Omega.
\end{cases}
\end{equation}
With the help of Mountain Pass Lemma and some delicate estimates on the logarithmic term, Deng et al.
\cite{Deng2023} proved the existence of a mountain pass type solution (which is also a ground state
solution) to problem \eqref{eq3} when $\lambda\in \mathbb{R}$, $\mu>0$ and $N\geq4$. When $\mu<0$,
the corresponding energy functional possesses quite different variational properties and whether or
not the energy functional is still locally compact is still unknown. By applying the
mountain pass theorem without local $(PS)$ condition they obtained a weak solution under suitable
assumptions on $\lambda$ and $\mu$ when $N=3,4$. Moreover, when $N\geq3$, a nonexistence result
was also obtained for $\mu<0$. Later, Hajaiej et al. \cite{Hajaiej2024,Ha2024} reconsidered problem
\eqref{eq3} with $\mu<0$. They not only weakened part of the existence conditions, but also specified
the types of the weak solutions when $N\geq4$. Recently, by using the subcritical approximation method,
the existence of sign-changing solutions to problem \eqref{eq3} with $N\geq6, \lambda\in \mathbb{R}$
and $\mu>0$ was obtained by Liu et al. in \cite{Liu2024}. For more results on the existence and
multiplicity of solutions to PDEs with logarithmic type nonlinearities, we refer the interested readers to
\cite{Zhang2023, Shen2025,yang2022,LHW2023,Troy2016} and the references therein

To the best of our knowledge, there are few works dealing with the existence and multiplicity of weak
solutions to problem \eqref{eq1} with $\mu\neq 0$. When $N=4$ and $\mu>0$, by applying Jeanjean's monotonicity
result \cite{Jeanjean} and an approximation method, the second author of this paper \cite{Han2024} proved
that either the $H_0^1(\Omega)$-norm of the sequence of approximate solutions diverges to infinity or
the problem \eqref{eq1} admits a nontrivial weak solution. Furthermore, the former case can be excluded
when $\Omega$ is star-shaped. A key point in the process is the local compactness
of the energy functional, the proof of which depends heavily on the assumption $\mu>0$ and the fact that the
exponent corresponding to the critical term is exactly the same as that corresponding to the nonlocal term
in the energy functional. However, the situation becomes more thorny when $\mu<0$, and the approach used in
\cite{Han2024} to obtain the local compactness of the $(PS)$ sequence cannot be applied here. Moreover,
the topological properties of the energy functional also change and $0$ is no longer a local minimum of the
energy functional.

Motivated by the works mentioned above, we consider problem \eqref{eq1} with $\mu<0$ and investigate how
the interaction between the nonlocal term, the critical nonlinearity and the logarithmic term affect the
existence and (or) multiplicity of weak solutions to this problem. Since the geometry structure of the energy
functional changes dramatically as the space dimension $N$ varies, our discussion is divided into three
cases, i.e., $N=3$, $N=4$ and $N\geq 5$.

When $N=3$, the energy functional possesses the local minimum structure and mountain pass structure under
suitable assumptions. In view of this, we apply Ekeland's variational principle, Br\'{e}zis-Lieb's lemma
and some estimates to prove that problem \eqref{eq1} admits a local minimum solution and a ground state
solution. Furthermore, it is shown under some more assumptions on $b$, $\lambda$ and $\mu$ that the local
minimum solution is also a ground state solution. Moreover, by combining Kajikiya's symmetric mountain
pass lemma with an appropriate truncation on the critical term, we prove that problem \eqref{eq1} possesses
a sequence of solutions whose energies and $H_0^1(\Omega)$-norms converge to $0$. Set
\begin{align*}
\mathcal{A}_0:&=\left\{( b, \lambda, \mu): b>0, \lambda\in \mathbb{R}, \mu<0, \frac{bS^3}{12}+\frac{(4+b^2S^3)(bS^3+\sqrt{b^2S^6+4S^3})}{24}+\frac{\mu}{2}e^{-\frac{\lambda}{\mu}}|\Omega|>0 \right\},\\
\mathcal{A}_1:&=\left\{( b, \lambda, \mu): b>0, \lambda\in \mathbb{R}, \mu<0,
\frac{1}{3}C_{b,S}+\frac{b}{12}C_{b,S}^2+\frac{\sqrt{e}}{3}\mu e^{-\frac{\lambda}{\mu}}|\Omega|\geq0 \right\},\\
\mathcal{A}_2:&=\left\{( b, \lambda, \mu): b>0, \lambda\in \mathbb{R}, \mu<0, \frac{1}{3}S^{\frac{3}{2}}+\frac{\mu}{2}e^{-\frac{\lambda}{\mu}}|\Omega|>0,
\frac{1}{3}C_{b,S}+\frac{b}{12}C_{b,S}^2+\frac{\sqrt{e}}{3}\mu e^{-\frac{\lambda}{\mu}}|\Omega|>0 \right\},
\end{align*}
where $C_{b,S}:=\frac{1}{2}\left(bS^3+\sqrt{b^2S^6+4S^3}\right)$ is the positive solution of the equation $$1+bC_{b,S}-S^{-3}C_{b,S}^2=0$$
and $S>0$ is the best Sobolev embedding constant given
in \eqref{S}. Notice that $\mathcal{A}_0\supset\mathcal{A}_1\supset\mathcal{A}_2$ since
$\frac{1}{3}C_{b,S}+\frac{b}{12}C_{b,S}^2=\frac{bS^3}{12}+\frac{(4+b^2S^3)(bS^3+\sqrt{b^2S^6+4S^3})}{24}$.
The existence and multiplicity results for this case can be stated as follows.

\begin{theorem}\label{th1.1}
Suppose that $N=3$ and $(b, \lambda,\mu)\in \mathcal{A}_0$.
Then problem \eqref{eq1} has a local minimum solution with negative energy.
\end{theorem}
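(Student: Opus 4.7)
The plan is to construct a local minimizer of the energy functional
\[
I(u)=\frac{1}{2}\|u\|^2+\frac{b}{4}\|u\|^4-\frac{\lambda}{2}\|u\|_2^2-\frac{\mu}{2}\int_{\Omega}u^2\log u^2\,\mathrm{d}x+\frac{\mu}{2}\|u\|_2^2-\frac{1}{6}\|u\|_6^6
\]
on a sufficiently small closed ball of $H_0^1(\Omega)$ via Ekeland's variational principle. The role of the hypothesis $(b,\lambda,\mu)\in\mathcal{A}_0$ is to produce a strictly positive barrier on a sphere whose radius lies \emph{below} the Sobolev threshold $\sqrt{C_{b,S}}$; this will later rule out concentration at the critical exponent.

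First I would establish the barrier. Grouping the $u^2$- and $u^2\log u^2$-contributions in $I$ and minimizing the resulting integrand pointwise over $u\in\mathbb{R}$ (the minimum is attained at $u^2=e^{-\lambda/\mu}$) yields
\[
-\frac{\lambda-\mu}{2}\int_{\Omega}u^2\,\mathrm{d}x-\frac{\mu}{2}\int_{\Omega}u^2\log u^2\,\mathrm{d}x\;\geq\;\frac{\mu}{2}e^{-\lambda/\mu}|\Omega|.
\]
Combining this with the Sobolev inequality $\|u\|_6^6\leq S^{-3}\|u\|^6$ gives $I(u)\geq g(\|u\|^2)$, where
\[
g(t)=\frac{t}{2}+\frac{b t^2}{4}-\frac{t^3}{6S^3}+\frac{\mu}{2}e^{-\lambda/\mu}|\Omega|.
\]
A direct calculation shows $g$ attains its unique maximum on $(0,\infty)$ at $t=C_{b,S}$ with value $\tfrac{1}{3}C_{b,S}+\tfrac{b}{12}C_{b,S}^2+\tfrac{\mu}{2}e^{-\lambda/\mu}|\Omega|$, which after substituting the closed form of $C_{b,S}$ coincides with the quantity defining $\mathcal{A}_0$. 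Hence $g$ has two positive zeros $t_1<C_{b,S}<t_2$, and any $\rho$ with $\rho^2\in(t_1,C_{b,S})$ satisfies $\inf_{\|u\|=\rho}I>0$ together with $\rho^2<C_{b,S}$. For any fixed $v\in H_0^1(\Omega)\setminus\{0\}$, expanding $I(tv)$ shows that as $t\to 0^+$ the term $-\mu t^2\log t\,\|v\|_2^2$ is strictly negative (since $\mu<0$ and $\log t<0$) and dominates over the other $O(t^2)$ contributions because $|t^2\log t|/t^2=|\log t|\to\infty$; hence $I(tv)<0$ for $t>0$ small, so $m:=\inf_{\overline{B}_\rho}I<0$.

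Next, I would apply Ekeland's variational principle on the complete metric space $\overline{B}_\rho$ to obtain a sequence $\{u_n\}\subset\overline{B}_\rho$ with $I(u_n)\to m$ whose constrained derivative tends to zero. Since $I$ is strictly positive on $\partial B_\rho$ while $I(u_n)\to m<0$, for all large $n$ the $u_n$ lie in the open ball, so the constraint becomes inactive and $\|I'(u_n)\|_{H^{-1}(\Omega)}\to 0$. Up to a subsequence $u_n\rightharpoonup u_0$ in $H_0^1(\Omega)$, $u_n\to u_0$ in every $L^p$ with $p<6$ and a.e.\ on $\Omega$. Standard logarithmic estimates of the form $|s^2\log s^2|\leq C_\varepsilon(|s|^{2-\varepsilon}+|s|^{2+\varepsilon})$ together with dominated convergence give $\int_{\Omega}u_n^2\log u_n^2\,\mathrm{d}x\to\int_{\Omega}u_0^2\log u_0^2\,\mathrm{d}x$.

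The hard part is to rule out a nontrivial Sobolev deficit $l^2:=\lim_n\|u_n-u_0\|^2>0$. I would pass to the weak limit in $\langle I'(u_n),\phi\rangle=o(1)$ for $\phi\in C_c^\infty(\Omega)$, tracking the nonlocal factor $b\|u_n\|^2\to bA_\infty$ with $A_\infty:=\|u_0\|^2+l^2$, so that $u_0$ weakly solves the limiting equation with coefficient $1+bA_\infty$. Testing that equation against $u_0$ and subtracting it from the identity produced by $\langle I'(u_n),u_n\rangle\to 0$, then invoking the Br\'ezis--Lieb lemma on the critical term, produces the key relation
\[
(1+bA_\infty)\,l^2\;=\;\lim_{n\to\infty}\|u_n-u_0\|_6^6\;\leq\;S^{-3}l^6.
\]
If $l>0$, this forces $l^4\geq S^3(1+bA_\infty)\geq S^3(1+bl^2)$, whence $l^2\geq C_{b,S}$; but $\|u_n\|\leq\rho$ together with $\|u_n\|^2=\|u_0\|^2+\|u_n-u_0\|^2+o(1)$ gives $l^2\leq\rho^2<C_{b,S}$, a contradiction. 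Therefore $l=0$, $u_n\to u_0$ strongly in $H_0^1(\Omega)$, and continuity of $I$ gives $I(u_0)=m<0$. Since $u_0$ lies in the open ball $B_\rho$, it is an unconstrained critical point of $I$ and hence the sought local-minimum weak solution of \eqref{eq1} with negative energy. All other steps are rather routine; it is precisely the choice $\rho^2<C_{b,S}$, made possible by the assumption $(b,\lambda,\mu)\in\mathcal{A}_0$, that neutralizes the critical exponent.
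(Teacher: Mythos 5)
Your proposal is correct, and it shares the paper's overall skeleton: the pointwise logarithmic estimate giving $-\frac{\lambda-\mu}{2}\|u\|_2^2-\frac{\mu}{2}\int_\Omega u^2\log u^2\,\mathrm{d}x\geq\frac{\mu}{2}e^{-\lambda/\mu}|\Omega|$, the Sobolev inequality producing a positive barrier on a sphere (with $\mathcal{A}_0$ being exactly the condition that the barrier function is positive at its maximizer $C_{b,S}$), negativity of the infimum from the $t^2\log t^2$ term, Ekeland's variational principle on the closed ball, and the convergence lemma for the logarithmic terms. Where you genuinely diverge is the recovery of compactness. You rule out a deficit $l>0$ by a concentration--threshold argument: passing to the limit equation with Kirchhoff coefficient $1+bA_\infty$, testing it against $u_0$, subtracting from $\langle I'(u_n),u_n\rangle\to0$ to get $(1+bA_\infty)l^2=\lim_n\|u_n-u_0\|_6^6\leq S^{-3}l^6$, and concluding $l^2\geq C_{b,S}$, which is incompatible with your deliberate choice $\rho^2<C_{b,S}$. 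The paper instead substitutes the identity coming from $\langle I'(u_n),u_n\rangle=o_n(1)$ directly into the Br\'ezis--Lieb expansion of $I(u_n)$, obtaining $c_\rho=I(\widetilde{u})+\bigl(\text{strictly positive terms in the deficit}\bigr)$, so any nonzero deficit yields $c_\rho>I(\widetilde{u})\geq c_\rho$ immediately; no Sobolev threshold is invoked, which is why the paper can take $\rho^2=C_{b,S}$ itself (the maximizer of the barrier, hence the weakest membership condition), whereas your argument needs $\rho^2$ strictly below $C_{b,S}$. Both routes are valid under $(b,\lambda,\mu)\in\mathcal{A}_0$ and deliver the same conclusion; the paper's is slightly more economical, while yours makes explicit the role of $C_{b,S}$ as the concentration threshold and is closer to the argument the paper itself uses later for the ground state (Lemma \ref{le3.4}) via the concentration compactness principle.
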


\begin{theorem}\label{th1.2}
Suppose that $N=3$ and $(b, \lambda,\mu)\in \mathcal{A}_1$.
Then problem \eqref{eq1} has a ground state solution with negative energy.
\end{theorem}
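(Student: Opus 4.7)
The plan is to realise the ground state as a minimiser of $I$ on the Nehari manifold
$$\mathcal{N}:=\{u\in H_0^1(\Omega)\setminus\{0\}:\langle I'(u),u\rangle=0\},$$
and to use the hypothesis $(b,\lambda,\mu)\in\mathcal{A}_1$ together with the Br\'ezis--Lieb lemma to rule out concentration of a critical bubble along the minimising sequence. Every nontrivial weak solution of \eqref{eq1} lies in $\mathcal{N}$, so any critical point of $I$ that attains $m:=\inf_{\mathcal{N}}I$ is automatically a ground state. Eliminating $\|u\|_{6}^{6}$ from $I(u)$ through the Nehari identity (recall $2^{*}=6$ when $N=3$) yields
$$I(u)=\tfrac{1}{3}\|u\|^{2}+\tfrac{b}{12}\|u\|^{4}+\int_{\Omega}h(u)\,dx\quad\text{for every }u\in\mathcal{N},$$
where $h(s):=-\tfrac{1}{3}(\lambda s^{2}+\mu s^{2}\log s^{2})+\tfrac{\mu}{2}s^{2}$. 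A direct computation gives $\min_{s\in\mathbb{R}}h(s)=\tfrac{\sqrt{e}}{3}\mu e^{-\lambda/\mu}$, attained at $s^{2}=\sqrt{e}\,e^{-\lambda/\mu}$, so $m\ge\tfrac{\sqrt{e}}{3}\mu e^{-\lambda/\mu}|\Omega|>-\infty$. Since $\mathcal{A}_1\subset\mathcal{A}_0$, Theorem~\ref{th1.1} supplies $u_0\in\mathcal{N}$ with $I(u_0)<0$, and hence $m<0$.

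I would then apply Ekeland's variational principle on $\mathcal{N}$ to produce a sequence $\{u_n\}\subset\mathcal{N}$ with $I(u_n)\to m$ and $I'(u_n)\to 0$ in $H^{-1}(\Omega)$. The Lagrange multiplier associated with the Nehari constraint is shown to vanish by verifying that $\Phi'(u_n)[u_n]=2b\|u_n\|^{4}-2\mu\|u_n\|_{2}^{2}-4\|u_n\|_{6}^{6}$ stays away from $0$ along $\{u_n\}$; otherwise combining this with $\langle I'(u_n),u_n\rangle=0$ would drive $\|u_n\|$ into a regime incompatible with $m<0$. Boundedness of $\{u_n\}$ in $H_0^1(\Omega)$ follows directly from the displayed formula for $I$, so that, up to a subsequence, $u_n\rightharpoonup u_\infty$ in $H_0^1(\Omega)$ and $u_n\to u_\infty$ a.e.\ and in $L^{q}(\Omega)$ for every $q\in[1,6)$; standard testing against smooth functions yields $I'(u_\infty)=0$.

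The key step is strong convergence. Put $v_n:=u_n-u_\infty$, $A:=\lim\|v_n\|^{2}$ and $B:=\lim\|v_n\|_{6}^{6}$. Two applications of the Br\'ezis--Lieb lemma (the logarithmic integral being handled by splitting $\{|u_n|\le 1\}\cup\{|u_n|>1\}$ and using dominated convergence on each piece) combined with $\langle I'(u_n),u_n\rangle=0$, $I'(u_\infty)=0$ and $I(u_n)\to m$ produce the identities
$$A\bigl(1+2b\|u_\infty\|^{2}\bigr)+bA^{2}=B,\qquad m=I(u_\infty)+\tfrac{A}{3}+\tfrac{b\|u_\infty\|^{2}A}{6}+\tfrac{b}{12}A^{2},$$
while the Sobolev inequality gives $B\le S^{-3}A^{3}$. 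If $u_\infty\ne 0$, then $u_\infty\in\mathcal{N}$ so $I(u_\infty)\ge m$, and the second identity forces $A=0$ because each of the $A$-terms is non-negative. If instead $u_\infty=0$, the first identity combined with Sobolev forces $A\ge C_{b,S}$, whence $m\ge\tfrac{1}{3}C_{b,S}+\tfrac{b}{12}C_{b,S}^{2}$; coupled with the lower bound $I(u_\infty)\ge\tfrac{\sqrt{e}}{3}\mu e^{-\lambda/\mu}|\Omega|$ obtained from the pointwise minimum of $h$, the inequality defining $\mathcal{A}_1$ is exactly what makes this scenario incompatible with $m<0$. Hence $A=0$ in either case, $u_n\to u_\infty$ strongly in $H_0^1(\Omega)$, $u_\infty\ne 0$, and $I(u_\infty)=m<0$, giving the sought ground state. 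The principal obstacle is to align these two distinct thresholds — the Kirchhoff--Sobolev constant $C_{b,S}$ (the positive root of $1+bC=S^{-3}C^{2}$, which sets the minimal bubble mass) and the sharp pointwise baseline $\tfrac{\sqrt{e}}{3}\mu e^{-\lambda/\mu}$ of $h$ — so that the inequality defining $\mathcal{A}_1$ emerges as the precise compatibility requirement preventing bubble formation.
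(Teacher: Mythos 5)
Your overall strategy is recognizable and you have correctly isolated the two quantities that the set $\mathcal{A}_1$ is designed to reconcile: the minimal bubble mass $C_{b,S}$ coming from the Kirchhoff--Sobolev relation $1+bC=S^{-3}C^2$, and the pointwise minimum $\tfrac{\sqrt{e}}{3}\mu e^{-\lambda/\mu}$ of the reduced integrand on the constraint. Your formula for $I$ restricted to the Nehari set and the resulting lower bound for $m$ agree with the paper's inequality \eqref{3-ineq-11}. However, there are two genuine gaps. First, the elimination of the Lagrange multiplier is asserted, not proved: you would need $\Phi'(u_n)[u_n]=2b\|u_n\|^4-2\mu\|u_n\|_2^2-4\|u_n\|_6^6$ to stay away from $0$, and with $\mu<0$ this is delicate because the fibering map $t\mapsto I(tu)$ tends to $0^-$ as $t\to0^+$ and to $-\infty$ as $t\to\infty$, so each ray meets $\mathcal{N}$ at least twice (a stable and an unstable crossing) and the degenerate set where $\Phi'(u)[u]=0$ need not be empty; the heuristic ``otherwise $\|u_n\|$ would enter a regime incompatible with $m<0$'' is not an argument. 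Second, and more seriously, the claim that ``standard testing against smooth functions yields $I'(u_\infty)=0$'' is false for Kirchhoff functionals under weak convergence: with $A=\lim\|u_n-u_\infty\|^2$, the coefficient $1+b\|u_n\|^2$ converges to $1+b(\|u_\infty\|^2+A)$, so the weak limit solves a perturbed equation and $\langle I'(u_\infty),u_\infty\rangle=-bA\|u_\infty\|^2<0$ whenever $A>0$ and $u_\infty\neq0$. Hence $u_\infty\in\mathcal{N}$ and the resulting inequality $I(u_\infty)\geq m$ --- the crux of your case $u_\infty\neq0$ --- presuppose $A=0$, which is exactly what is to be proved; the argument is circular at its key step. (Your first Br\'ezis--Lieb identity, with the coefficient $2b\|u_\infty\|^2$, is likewise derived under the unjustified assumption $I'(u_\infty)=0$; the correct relation, obtained by testing $I'(u_n)$ against $u_\infty$, is $A(1+b\|u_\infty\|^2)+bA^2=B$.)

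The paper sidesteps both difficulties by minimizing $I$ over the set $\mathcal{K}$ of \emph{exact} nontrivial critical points, which is nonempty because $\mathcal{A}_1\subset\mathcal{A}_0$ and Theorem \ref{th1.1} already provides one; then no Ekeland argument and no multiplier appear, since $I'(u_n)=0$ identically along the minimizing sequence. Concentration is excluded not by Br\'ezis--Lieb but by the concentration-compactness principle: testing $I'(u_n)=0$ against $u_n\phi$ with a cutoff $\phi$ localized at a putative concentration point yields $(1+b\mu_k)\mu_k\leq\nu_k$, hence $\mu_k\geq C_{b,S}$, and the identity $I(u_n)-\tfrac16\langle I'(u_n),u_n\rangle$ then gives $c_{\mathcal{K}}\geq\tfrac13C_{b,S}+\tfrac{b}{12}C_{b,S}^2+\tfrac{\mu}{3}e^{\frac12-\frac{\lambda}{\mu}}|\Omega|\geq0$, contradicting $c_{\mathcal{K}}<0$. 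This localized argument also accommodates a bubble coexisting with a nonzero weak limit, which your dichotomy ($u_\infty=0$ versus $u_\infty\neq0$) handles only through the unjustified step above. If you wish to keep a Nehari-type framework you would need either to prove that $\mathcal{N}$ is a natural constraint for this functional, or to replace the comparison $I(u_\infty)\geq m$ by a fibering projection of $u_\infty$ back onto $\mathcal{N}$; as written, the proof is incomplete.
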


\begin{remark}\label{remark1.1}
In addition to the assumptions in Theorems \ref{th1.1} and \ref{th1.2}, if there holds
\begin{equation}\label{local-ground}
b^2S^3+4+\sqrt{b^4S^6+4b^2S^3}-4\sqrt{1+b|\mu|e^{\frac{1}{2}-\frac{\lambda}{\mu}}|\Omega|}\geq0,
\end{equation}
then the local minimum solution is also a ground state solution.
\end{remark}

\begin{theorem}\label{th1.3}
Suppose that $N=3$ and $(b, \lambda,\mu)\in \mathcal{A}_2$. Then problem \eqref{eq1} has a sequence of
solutions $\{u_k\}$ with $I(u_k)\leq0$ and $\|u_k\|_{H_0^1(\Omega)}\rightarrow 0$ as $k\rightarrow \infty$.
\end{theorem}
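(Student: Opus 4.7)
The plan is to apply Kajikiya's symmetric mountain pass lemma after truncating the critical term, so that the modified functional is globally coercive while remaining equal to $I$ on a small ball about the origin. Writing the energy functional of \eqref{eq1} as
\[
I(u)=\tfrac12\|u\|^2+\tfrac{b}{4}\|u\|^4-\tfrac{\lambda-\mu}{2}\!\int_\Omega u^2\,dx-\tfrac{\mu}{2}\!\int_\Omega u^2\log u^2\,dx-\tfrac16\!\int_\Omega|u|^6\,dx,
\]
where $\|u\|^2=\int_\Omega|\nabla u|^2\,dx$, I fix a smooth non-increasing cutoff $\tau:[0,\infty)\to[0,1]$ with $\tau\equiv1$ on $[0,1]$ and $\tau\equiv0$ on $[2,\infty)$, and for a small $\rho_0>0$ to be chosen set
\[
\tilde I(u):=I(u)+\tfrac16\bigl(1-\tau(\|u\|/\rho_0)\bigr)\!\int_\Omega|u|^6\,dx.
\]
Then $\tilde I$ is $C^1$, even, agrees with $I$ on $\{\|u\|\le\rho_0\}$, and has the critical term switched off on $\{\|u\|\ge 2\rho_0\}$. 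Combining the elementary estimate $|s^2\log s^2|\le C_\delta(1+|s|^{2+\delta})$ for some $\delta\in(0,4)$ with the Sobolev embedding, the Kirchhoff quartic $\tfrac{b}{4}\|u\|^4$ then forces $\tilde I(u)\to+\infty$ as $\|u\|\to\infty$, so $\tilde I$ is bounded below.

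The main technical step is to verify the Palais–Smale condition for $\tilde I$ at every negative level. Any $(PS)_c$ sequence $\{u_n\}$ with $c<0$ is bounded by coercivity, hence up to a subsequence $u_n\rightharpoonup u$ in $H_0^1(\Omega)$, and passage to the limit gives $\tilde I'(u)=0$. The Br\'ezis–Lieb decomposition applied to $\int|u_n|^6$ together with Sobolev's inequality, and a careful handling of $\int u_n^2\log u_n^2$ using the elementary bound $s^2\log s^2\ge -s^2/e$, reduce strong convergence to excluding nontrivial concentration. The quantitative input that closes this is supplied by $\mathcal A_2$: the bound $\tfrac13 C_{b,S}+\tfrac{b}{12}C_{b,S}^2+\tfrac{\sqrt{e}}{3}\mu e^{-\lambda/\mu}|\Omega|>0$ rules out a single concentration profile in the three-dimensional nonlocal setting, while $\tfrac13 S^{3/2}+\tfrac{\mu}{2}e^{-\lambda/\mu}|\Omega|>0$ controls the companion estimate when the Kirchhoff coefficient disappears in the limit. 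This compactness verification, carefully tailored to both the truncation and the logarithmic term, is where I expect the main obstacle to lie.

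Next, for each integer $k\ge 1$ I construct a symmetric closed set $A_k\subset H_0^1(\Omega)\setminus\{0\}$ with Krasnoselskii genus $\gamma(A_k)\ge k$ and $\sup_{A_k}\tilde I<0$. Fix any $k$-dimensional subspace $E_k\subset C_c^\infty(\Omega)$. Finite dimensionality guarantees $c_k:=\min\{\int_\Omega u^2:u\in E_k,\,\|u\|=1\}>0$ and uniform bounds for $\int_\Omega u^2$, $\int_\Omega u^2\log u^2$ and $\int_\Omega|u|^6$ on that unit sphere. For $\|u\|=1$ and $0<t<1$, splitting $\log(t^2u^2)=\log t^2+\log u^2$ in $\tilde I(tu)$ yields
\[
\tilde I(tu)\le t^2\!\left[K_k+\tfrac{|\mu|\,c_k}{2}\log t^2\right]+\tfrac{b}{4}\,t^4,
\]
where $K_k$ depends only on $k$, $\lambda$, $\mu$ and $|\Omega|$. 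Since $\log t^2\to-\infty$ as $t\to 0^+$, the bracket becomes arbitrarily negative, so there exists $r_k\in(0,\rho_0)$ with $\tilde I(u)<0$ on $A_k:=\{u\in E_k:\|u\|=r_k\}$. The set $A_k$ is homeomorphic to $\mathbb S^{k-1}$, whence $\gamma(A_k)=k$.

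All three hypotheses of Kajikiya's symmetric mountain pass lemma are thereby in place, producing a sequence of nontrivial critical points $\{u_k\}$ of $\tilde I$ with $\tilde I(u_k)\le 0$, $\tilde I(u_k)\to 0^-$ and $\|u_k\|\to 0$. For $k$ large, $\|u_k\|<\rho_0$, so $\tilde I\equiv I$ near $u_k$ and each such $u_k$ is a weak solution of \eqref{eq1} with $I(u_k)\le 0$ and $\|u_k\|_{H_0^1(\Omega)}\to 0$, which is precisely the assertion of Theorem \ref{th1.3}.
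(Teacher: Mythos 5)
Your overall strategy coincides with the paper's: truncate the critical term to make the functional bounded below, verify a local Palais--Smale condition via concentration--compactness using the two inequalities in $\mathcal{A}_2$, build genus-$k$ sets with negative supremum from the $\log t^2\to-\infty$ behaviour, invoke Kajikiya's symmetric mountain pass lemma (in the form requiring only a \emph{local} $(PS)$ condition, which the paper states separately as Lemma \ref{symmetric mountain-pass lemma}), and recover solutions of \eqref{eq1} because the critical points have small norm. The genus construction and the final de-truncation step are essentially identical to the paper's Lemmas \ref{3-2-Ak} and the proof of Theorem \ref{th1.3}.

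The gap is in the compactness step, and it is caused by your choice of truncation. The paper's cutoff $\psi(\|u\|^2)$ is engineered (via the explicit tail $3S^3x^{-2}+\cdots$) so that the lower-bound function $\widetilde{G}$ satisfies $\widetilde{G}(t)\geq G_0>0$ for all $t\geq t_0$; consequently any $(PS)_c$ sequence with $c<G_0$ --- in particular with $c\leq 0$ --- is eventually trapped in $\{\|u\|<t_0\}$, where $\psi\equiv1$ and $\psi'\equiv0$, so the truncation disappears entirely from both $I_\psi(u_n)$ and $\langle I_\psi'(u_n),u_n\phi\rangle$ and the concentration analysis runs on the original functional. Your generic cutoff $\tau(\|u\|/\rho_0)$ has no such property: coercivity bounds a $(PS)_c$ sequence with $c<0$ but does not force $\|u_n\|<\rho_0$, since $\tilde I$ can be negative well outside that ball (the constant $\frac{\mu}{2}e^{-\lambda/\mu}|\Omega|<0$ coming from the logarithmic term survives at all scales). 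Hence in your $(PS)$ verification the transition-region terms persist, including the derivative contribution proportional to $\tau'(\|u_n\|/\rho_0)\|u_n\|_6^6\langle\nabla u_n,\nabla(u_n\phi)\rangle/\|u_n\|$ and a concentration bubble weighted by $\lim\tau(\|u_n\|/\rho_0)\in[0,1]$; your sketch does not address these, and the thresholds you quote from $\mathcal{A}_2$ are computed for the untruncated bubble equation $(1+b\mu_k)\mu_k\leq\nu_k$. Relatedly, you verify $(PS)_c$ only for $c<0$, whereas the localized Kajikiya lemma needs it for all $c$ below some strictly positive $c^*$ (the paper takes $c^*=\min\{G_0,\frac13C_{b,S}+\frac{b}{12}C_{b,S}^2+\frac{\mu}{3}e^{\frac12-\frac{\lambda}{\mu}}|\Omega|\}$). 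Both defects are repairable --- either by redesigning the cutoff as in the paper so that sub-$G_0$ energy forces small norm, or by carrying the $\tau$ and $\tau'$ terms through the concentration--compactness argument and extending the level range to a positive threshold --- but as written the central compactness claim is asserted rather than proved, precisely at the point you yourself flag as the main obstacle.
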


For the case $N=4$, considering the corresponding energy functional, the interaction between the Kirchhoff
term $\|u\|_{H_0^1(\Omega)}^4$ and the critical term $\|u\|_4^4$ results in some interesting phenomena.
If $bS^2-1\geq0$, the nonlocal term dominates the critical term. In this case the energy functional is
coercive in $H_0^1(\Omega)$ and satisfies the $(PS)$ condition globally. The existence of infinitely
many solutions to problem \eqref{eq1} is obtained by using the symmetric mountain pass lemma for all
$\lambda\in \mathbb{R}$ and $\mu<0$. However, if $bS^2-1<0$, the critical term plays a dominated role
and the properties of the energy functional change. The energy functional is on longer bounded from
below and fails to satisfy the $(PS)$ condition globally. In order to overcome these difficulties,
we first introduce a suitable truncation on the critical term $\|u\|_4^4$.
Then we apply the Concentration Compactness principle to obtain a local $(PS)$ condition.
Finally, with the help of the symmetric mountain pass lemma, the existence of infinitely many solutions to
problem \eqref{eq1} is obtained. Set
\begin{align*}
\mathcal{B}_0:=\left\{( b, \lambda, \mu): b>0, \lambda\in \mathbb{R}, \mu<0, \frac{S^2}{4(1-bS^2)}+
\frac{\mu}{4}e^{1-\frac{\lambda}{\mu}}|\Omega|>0 \right\}.
\end{align*}

\begin{theorem}\label{th1.4}
Suppose that $N=4$ and one of the following $(i)$ and $(ii)$ holds.\\
$(i)$ $bS^2-1\geq0$ and $\lambda\in \mathbb{R}$, $\mu<0$,\\
$(ii)$ $bS^2-1<0$ and $(b,\lambda,\mu)\in \mathcal{B}_0$.\\
Then problem \eqref{eq1} has a sequence of solutions $\{u_k\}$ with $I(u_k)\leq0$ and
$\|u_k\|_{H_0^1(\Omega)}\rightarrow 0$ as $k\rightarrow \infty$.
\end{theorem}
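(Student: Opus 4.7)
The plan is to apply Kajikiya's symmetric mountain pass lemma to the even $C^1$ energy functional
\[
I(u)=\frac{1}{2}\|u\|_{H_0^1(\Omega)}^{2}+\frac{b}{4}\|u\|_{H_0^1(\Omega)}^{4}-\frac{\lambda-\mu}{2}\int_\Omega u^2\,\mathrm{d}x-\frac{\mu}{2}\int_\Omega u^2\log u^2\,\mathrm{d}x-\frac{1}{4}\int_\Omega u^4\,\mathrm{d}x
\]
associated with problem \eqref{eq1}, where $2^{*}=4$ since $N=4$. Kajikiya's theorem will produce the desired sequence of critical points $u_k$ with $\|u_k\|_{H_0^1(\Omega)}\to 0$ and $I(u_k)\leq 0$, provided I can exhibit (a) a symmetric neighborhood of $0$ on which $I$ (or an auxiliary functional coinciding with $I$ there) is bounded below and satisfies the $(PS)$ condition, and (b) for every $k\in\mathbb{N}$ a closed symmetric subset $A_k$ of that neighborhood of Krasnoselskii genus at least $k$ on which $\sup_{A_k}I<0$.

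For case $(i)$ the hypothesis $bS^2\geq 1$ together with $\int_\Omega u^4\,\mathrm{d}x\leq S^{-2}\|u\|_{H_0^1(\Omega)}^{4}$ gives $\tfrac{b}{4}\|u\|_{H_0^1(\Omega)}^{4}\geq\tfrac{1}{4}\int_\Omega u^4\,\mathrm{d}x$, so the nonlocal Kirchhoff term absorbs the critical term. Using the elementary estimate $|s^{2}\log s^{2}|\leq C_\varepsilon(|s|^{2-\varepsilon}+|s|^{2+\varepsilon})$ together with Sobolev--H\"older, $I$ is coercive and bounded below on all of $H_0^1(\Omega)$. A Br\'ezis--Lieb argument, combined with the same absorption to kill the non-compact $L^{4}$ remainder, then yields the global $(PS)$ condition, and Kajikiya's lemma applies directly with $J=I$.

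For case $(ii)$, where $bS^{2}<1$, the critical term dominates at infinity, $I$ is unbounded below, and the $(PS)$ condition fails above the Kirchhoff--Br\'ezis--Nirenberg level $c^{*}:=\frac{S^{2}}{4(1-bS^{2})}$. To bypass this, I would use the standard truncation trick: choose a nonincreasing $\phi\in C^\infty([0,\infty))$ with $\phi\equiv 1$ on $[0,R_0]$ and $\phi\equiv 0$ on $[2R_0,\infty)$ for a small $R_0$, and replace the critical term by $\frac{\phi(\|u\|_{H_0^1(\Omega)}^{2})}{4}\int_\Omega u^{4}\,\mathrm{d}x$. The resulting functional $\tilde I$ is even, $C^{1}$, coercive, bounded below, and agrees with $I$ on a symmetric ball $B_{r_0}$. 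Applying the Concentration--Compactness principle together with the defining inequality of $\mathcal{B}_0$ would deliver $(PS)_c$ for $\tilde I$ at every level $c<c^{*}$; in particular all negative levels are admissible. Because critical points of $\tilde I$ lying in $B_{r_0}$ are automatically critical points of $I$, it suffices to produce infinitely many small critical points of $\tilde I$.

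The genus step is identical in both cases and exploits the sublinear character of the logarithmic nonlinearity at $0$: for each $k$, pick a $k$-dimensional subspace $E_k\subset H_0^1(\Omega)\cap L^\infty(\Omega)$; since norms on $E_k$ are equivalent, $\mu<0$, and $\lim_{u\to 0^{+}}(u\log u^{2})/u=-\infty$, scaling elements of the unit sphere of $E_k$ by a small $t>0$ makes the negative logarithmic contribution $-\tfrac{\mu}{2}\int_\Omega u^{2}\log u^{2}\,\mathrm{d}x$ dominate every other term, producing a symmetric compact set of genus $k$ inside $B_{r_0}$ on which $I<0$. Kajikiya's lemma then outputs $\{u_k\}$ as required. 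The hardest step will be the local $(PS)$ verification in case $(ii)$: one must carefully quantify the concentration behavior of truncated $(PS)_c$ sequences for $\tilde I$ and exploit $\mathcal{B}_0$ to ensure the admissible critical levels remain strictly below $c^{*}$; case $(i)$ and the genus construction are comparatively routine once these structural estimates are in hand.
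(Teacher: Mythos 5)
Your proposal follows essentially the same route as the paper in both cases: coercivity plus the global $(PS)$ condition and Kajikiya's lemma when $bS^2\geq 1$, and a norm-truncation of the critical term combined with the Concentration--Compactness principle and the local version of Kajikiya's lemma when $bS^2<1$, with the genus sets built by scaling finite-dimensional spheres so that the logarithmic term dominates near the origin. The only point needing repair is your coercivity estimate in case $(i)$: the bound $|s^{2}\log s^{2}|\leq C_\varepsilon(|s|^{2-\varepsilon}+|s|^{2+\varepsilon})$ only yields a lower bound of order $-\|u\|^{2+\varepsilon}$, which is not absorbed when $bS^{2}=1$ exactly (the quartic terms then cancel completely); one should instead use the pointwise inequality $t\log t\geq -e^{-1}$ applied to $e^{\lambda/\mu-1}u^{2}$, which shows that $-\tfrac{\lambda-\mu}{2}\|u\|_2^2-\tfrac{\mu}{2}\int_\Omega u^{2}\log u^{2}\,\mathrm{d}x\geq \tfrac{\mu}{2}e^{-\lambda/\mu}|\Omega|$ is bounded below by a constant, exactly as the paper does. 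With that substitution the argument goes through as you describe.
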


\begin{remark}\label{re4.1}
The existence of a local minimum solution and a ground state solution for problem \eqref{eq1}
with $N=4$ has been obtained in \cite{Zhang2024}.
\end{remark}

Lastly, we consider the case $N\geq5$. In this case, the critical Sobolev exponent is strictly
less than $4$, which, together with the fact that the sign of logarithmic term is indefinite
and that the logarithmic term does not satisfy the Aimbrosetti-Rabinowitz type condition,
makes it impossible to prove the compactness of $(PS)$ sequences. Moreover, the truncation method
can not be applied neither. Through a careful analysis of
the structure of the energy functional, we prove that problem \eqref{eq1} has a local minimum
solution. By imposing additional conditions on the parameters, we show that it is also a ground
state solution. Set
\begin{align*}
\mathcal{C}_0:&=\left\{( b, \lambda, \mu): b>0,\lambda\in\mathbb{R}, \mu<0,
 \frac{1}{2b}C_N+\frac{1}{4b}C_N^2-\frac{1}{2^*}\left(\frac{C_N}{bS}\right)^{\frac{2^*}{2}}
+\frac{\mu}{2}e^{-\frac{\lambda}{\mu}}|\Omega|>0 \right\},
\end{align*}
where  $C_N:=\frac{4}{N-4}$.

\begin{theorem}\label{th1.5}
Suppose that $N\geq 5$ and $(b,\lambda,\mu)\in \mathcal{C}_0$.  Then problem \eqref{eq1}
admits a local minimum solution with negative energy. If in addition $b\geq b^*:=\frac{2}{N-2}(\frac{N-4}{N-2})^{\frac{N-4}{2}}S^{-\frac{N}{2}}$, then the local minimum
solution is also a ground state solution.
\end{theorem}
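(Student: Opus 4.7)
The plan is to produce the local minimum by Ekeland's variational principle on a closed ball around the origin, establish compactness of the resulting $(PS)$ sequence via a Brezis--Lieb based case analysis that exploits the specific radius of the ball, and then, under the additional assumption $b\ge b^*$, upgrade it to a ground state through an energy comparison. The first step is a quantitative lower bound. An elementary calculus argument shows $\psi(s):=\tfrac{\mu-\lambda}{2}s^2-\tfrac{\mu}{2}s^2\log s^2\ge\tfrac{\mu}{2}e^{-\lambda/\mu}$ for all $s\ge 0$, with the minimum attained at $s=e^{-\lambda/(2\mu)}$. Combined with the critical Sobolev inequality, this yields
\begin{equation*}
I(u)\ge h(\|u\|)+\tfrac{\mu}{2}e^{-\lambda/\mu}|\Omega|,\qquad h(t):=\tfrac{t^2}{2}+\tfrac{bt^4}{4}-\tfrac{t^{2^*}}{2^*S^{2^*/2}}.
\end{equation*}
A direct substitution shows that $h(t_*)+\tfrac{\mu}{2}e^{-\lambda/\mu}|\Omega|$ coincides with the defining expression of $\mathcal{C}_0$ for $t_*:=\sqrt{C_N/b}$, so the hypothesis guarantees $I>0$ on $\{\|u\|=t_*\}$. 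On the other hand, for any fixed $\varphi\ne 0$, the dominant term in $I(\varepsilon\varphi)$ as $\varepsilon\downarrow 0$ is $-\tfrac{\mu}{2}\varepsilon^2(\log\varepsilon^2)\|\varphi\|_2^2$, which is negative of order $\varepsilon^2|\log\varepsilon|$ since $-\mu>0$; thus $c_0:=\inf_{\bar B_{t_*}}I<0$. Ekeland's variational principle on $\bar B_{t_*}$ then delivers a minimizing $(PS)_{c_0}$ sequence $\{u_n\}$ that eventually stays strictly inside $B_{t_*}$ thanks to the boundary positivity of $I$.

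The main obstacle is the compactness of $\{u_n\}$: the interaction between the nonlocal Kirchhoff term and the critical term prevents a direct passage to the limit. Passing to a weak limit $u_0$ (with $u_n\to u_0$ a.e.\ and in $L^q$ for $q<2^*$), one observes that $u_0$ a priori satisfies only a modified Kirchhoff equation with coefficient $1+bA$, where $A:=\lim\|u_n\|^2$ need not equal $\|u_0\|^2$. Applying the Brezis--Lieb lemma to $\|u_n\|^2$, $\|u_n\|^4$, and $\|u_n\|_{2^*}^{2^*}$, and combining $I(u_n)\to c_0$ with $I'(u_n)(u_n-u_0)\to 0$ (the subcritical terms pass to the limit by compactness), I would derive, with $B:=\lim\|u_n-u_0\|^2$ and $L:=\lim\|u_n-u_0\|_{2^*}^{2^*}$, the two identities
\begin{equation*}
L=(1+b\|u_0\|^2+bB)B,\qquad I(u_0)=c_0-\tfrac{(1+b\|u_0\|^2)B}{N}+\tfrac{(N-4)bB^2}{4N}.
\end{equation*}
The containment $\{u_n\}\subset\bar B_{t_*}$ gives $B\le t_*^2-\|u_0\|^2$, while $u_0\in\bar B_{t_*}$ forces $I(u_0)\ge c_0$. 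A short case analysis on the second identity then shows that $B>0$ is only compatible with $\|u_0\|=0$ and $B=t_*^2$, and substituting these back reduces the identity to $c_0=0$, contradicting $c_0<0$. Hence $B=0$, $u_n\to u_0$ strongly in $H_0^1$, and $u_0$ is the desired local minimum solution with $I(u_0)=c_0<0$.

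For the ground state statement, I would identify $b^*$ as precisely the threshold above which the auxiliary function $f(t):=1+bt^2-S^{-2^*/2}t^{2^*-2}$ is nonnegative on $[0,\infty)$: minimizing $f$ via $f'=0$ yields $\min f=1-\tfrac{N-4}{2}bt_c^2$, and setting this to zero together with the critical-point equation produces the stated formula for $b^*$. Consequently, for $b\ge b^*$ the function $h$ is nondecreasing on $[0,\infty)$, and any weak solution $v$ with $\|v\|>t_*$ satisfies
\begin{equation*}
I(v)\ge h(\|v\|)+\tfrac{\mu}{2}e^{-\lambda/\mu}|\Omega|\ge h(t_*)+\tfrac{\mu}{2}e^{-\lambda/\mu}|\Omega|>0>c_0,
\end{equation*}
while any weak solution $v\in\bar B_{t_*}$ already satisfies $I(v)\ge c_0$ by the definition of $c_0$. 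Therefore $u_0$ realizes the infimum of $I$ over all nontrivial weak solutions, proving it is a ground state.
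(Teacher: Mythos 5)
Your proposal is correct and follows essentially the same route as the paper: the lower bound $I(u)\ge h(\|u\|)+\tfrac{\mu}{2}e^{-\lambda/\mu}|\Omega|$ with positivity on the sphere of radius $\sqrt{4/(b(N-4))}$, negativity of the infimum from the $\varepsilon^2|\log\varepsilon|$ term, Ekeland plus a Br\'ezis--Lieb decomposition whose key point is that $\lim\|u_n-u_0\|^2<\tfrac{4}{b(N-4)}$ forces strong convergence, and the identification of $b^*$ as the threshold making $h$ nondecreasing for the ground-state claim. The only cosmetic difference is that you minimize over the full ball of radius $\widetilde r$ (requiring the extra boundary case $B=t_*^2$, which you correctly dispose of), whereas the paper shrinks to a radius $r_0<\widetilde r$ so that the strict inequality is immediate.
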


\begin{remark}
It is seen from \cite{Naimen-3} that when $N=3$, problem \eqref{eq2} has no solution for $\lambda\leq\lambda_1/4$.
When $N\geq4$, it follows from the well-known Pohozaev's identity that problem \eqref{eq2} admits no nontrivial solutions
for $\lambda\leq 0$ if $\Omega$ is a star-shaped domain. However, Theorems \ref{th1.1}-\ref{th1.5} show that this situation can be
reversed when the logarithmic term is introduced.
\end{remark}

This paper is organized as follows. In Section $2$, we introduce some notations, definitions
and necessary lemmas which will be used in the following proofs. The cases $N=3$, $N=4$ and
$N\geq5$ are treated in Sections $3$, $4$ and $5$, respectively.

\section{Preliminaries}

We begin this section with some notations. Throughout this paper, we use $\|\cdot\|_p$ to denote
the usual $L^p(\Omega)$ norm for $1\leq p\leq\infty$ and denote the $H_0^1(\Omega)$-norm by
$\|\cdot\|:=\|\nabla \cdot\|_2$. The dual space of $H_0^1(\Omega)$ is denoted by $H^{-1}(\Omega)$
and the dual pair between $H^{-1}(\Omega)$ and $H_0^1(\Omega)$ is written as $\langle\cdot,\cdot\rangle$.
For each Banach space $B$, we use $\rightarrow$ and $\rightharpoonup$ to denote the strong and
weak convergence in it, respectively. The notation $|\Omega|$ means the Lebesgue measure of
$\Omega$ in $\mathbb{R}^N$. The symbol $O(t)$ means $|\frac{O(t)}{t}|\leq C$ as $t\rightarrow 0$
and $o_n(1)$ is an infinitesimal as $n\rightarrow\infty$. The capital letter $C$ will denote
a generic positive constant which may vary from line to line. The positive constant $S$ denotes
the best embedding constant from $H_0^1(\Omega)$ to $L^{2^{*}}(\Omega)$, i.e.,
\begin{align}\label{S}
S=\inf\limits_{u\in H_0^1(\Omega)\backslash\{0\}}\dfrac{\| u\|^2}{\|u\|_{2^{*}}^{2}}.
\end{align}

In this paper, we consider weak solutions to problem \eqref{eq1} in the following sense.
\begin{definition}\label{weaksolution}$\mathrm{\bf{(Weak \ solution)}}$
A function $u\in H_0^1(\Omega)$ is called a weak solution to problem \eqref{eq1}, if for all
$\phi\in H_0^1(\Omega)$, it holds that
$$\left(1+b\int_{\Omega}|\nabla u|^2\mathrm{d}x\right)\int_{\Omega}\nabla u\nabla \phi\mathrm{d}x
-\lambda\int_{\Omega}u \phi\mathrm{d}x -\mu\int_{\Omega}u \phi \log u^2\mathrm{d}x-\int_{\Omega}|u|^{2^{*}-2}u\phi\mathrm{d}x=0.$$
\end{definition}

The energy functional associated with problem \eqref{eq1} and its Frech\'{e}t derivative are
denoted respectively by
\begin{equation}\label{energy}
I(u)=\frac{1}{2}\|u\|^2+\frac{b}{4}\|u\|^4-\frac{\lambda}{2}\|u\|_2^2+\frac{\mu}{2}\|u\|_2^2
-\frac{\mu}{2}\int_{\Omega}u^2\log u^2\mathrm{d}x-\frac{1}{2^{*}}\|u\|_{2^*}^{2^*},\ u\in H_0^1(\Omega),
\end{equation}
and for all $u,\phi\in H_0^1(\Omega)$,
\begin{equation*}
\begin{split}
\langle I'(u),\phi\rangle=\left(1+b\|u\|^2\right)\int_{\Omega}\nabla u\nabla \phi\mathrm{d}x
-\lambda\int_{\Omega}u \phi\mathrm{d}x -\mu\int_{\Omega}u \phi \log u^2\mathrm{d}x-\int_{\Omega}|u|^{2^{*}-2}u\phi\mathrm{d}x.
\end{split}
\end{equation*}
It is well known that $I(u)$ is well defined and is a $C^1$ functional in $H_0^1(\Omega)$.
Moreover, every critical point of $I$ corresponds to a weak solution to problem \eqref{eq1}.
Since $I(u)=I(|u|)$, we may assume that $u\geq0$ in the sequel.

The following two lemmas, Br\'{e}zis-Lieb's lemma and the Concentration Compactness principle,
will play a crucial role in recovering the compactness of the approximate sequences.

\begin{lemma}\label{Brezis-Lieb}(Br\'{e}zis-Lieb's lemma \cite{BreLieb})
Let $p\in(0,\infty)$. Suppose that $\{u_n\}$ is a bounded sequence in $L^p(\Omega)$ and
$u_n\rightarrow u$ a.e. in  $\Omega$ as $n\rightarrow\infty$.
Then
\begin{eqnarray*}
&\lim\limits_{n\rightarrow\infty}(\|u_n\|_p^p-\|u_n-u\|_p^p)=\|u\|_p^p.
\end{eqnarray*}
\end{lemma}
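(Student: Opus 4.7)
The plan is to reduce the lemma to a pointwise inequality followed by a dominated-convergence argument with an $\varepsilon$-splitting trick. The crucial ingredient is the elementary estimate: for every $\varepsilon>0$ there exists $C_\varepsilon>0$ such that for all $a,b\in\mathbb{R}$,
\[
\bigl||a+b|^{p}-|a|^{p}\bigr|\leq \varepsilon\,|a|^{p}+C_\varepsilon\,|b|^{p}.
\]
I would verify this by splitting into two regimes. In the regime $|b|\leq\delta|a|$ one applies the mean value theorem to $t\mapsto|t|^{p}$ (or the binomial expansion, depending on whether $p\geq 1$ or $p\in(0,1)$) to gain a factor of $|b|/|a|\leq\delta$; in the regime $|b|>\delta|a|$ one uses the crude bound $|a+b|^{p}\leq C_p(|a|^{p}+|b|^{p})$ and absorbs everything into $C_\varepsilon|b|^{p}$ after choosing $\delta=\delta(\varepsilon)$ small.

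Next, I would apply the inequality with $a=u_n-u$ and $b=u$, so that $a+b=u_n$, to obtain the pointwise bound
\[
\bigl||u_n|^{p}-|u_n-u|^{p}-|u|^{p}\bigr|\leq \varepsilon\,|u_n-u|^{p}+(C_\varepsilon+1)\,|u|^{p}.
\]
Fatou's lemma applied to $|u_n|^{p}\to|u|^{p}$ a.e.\ together with the $L^{p}$-boundedness of $\{u_n\}$ ensures $u\in L^{p}(\Omega)$, so $(C_\varepsilon+1)|u|^{p}\in L^{1}(\Omega)$. Introduce the truncation
\[
W_{n}^{\varepsilon}:=\Bigl(\bigl||u_n|^{p}-|u_n-u|^{p}-|u|^{p}\bigr|-\varepsilon\,|u_n-u|^{p}\Bigr)^{+},
\]
which satisfies $0\leq W_{n}^{\varepsilon}\leq (C_\varepsilon+1)|u|^{p}$ and $W_{n}^{\varepsilon}\to 0$ a.e.\ in $\Omega$ as $n\to\infty$. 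Lebesgue's dominated convergence theorem then yields $\int_\Omega W_{n}^{\varepsilon}\,\mathrm{d}x\to 0$.

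Finally, from $\bigl||u_n|^{p}-|u_n-u|^{p}-|u|^{p}\bigr|\leq W_{n}^{\varepsilon}+\varepsilon|u_n-u|^{p}$ and the bound $M:=\sup_{n}\|u_n-u\|_{p}^{p}<\infty$, I conclude
\[
\limsup_{n\to\infty}\int_\Omega \bigl||u_n|^{p}-|u_n-u|^{p}-|u|^{p}\bigr|\,\mathrm{d}x\leq \varepsilon M,
\]
and sending $\varepsilon\to 0^{+}$ finishes the proof. The main obstacle is the first step, the uniform pointwise inequality, which is where all the dependence on $p$ is packaged; once it is available, the rest is a clean soft-analysis argument. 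For $p\in(0,1)$ the subadditivity $|a+b|^{p}\leq|a|^{p}+|b|^{p}$ can replace the $\varepsilon$-splitting and eliminates the need for a separate argument in the sub-linear case.
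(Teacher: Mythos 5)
The paper does not prove this lemma at all --- it is quoted as a classical result with a citation to \cite{BreLieb} --- so there is no in-paper argument to compare against; your proof is correct and is in fact the standard argument from that very reference (the elementary inequality $\bigl||a+b|^{p}-|a|^{p}\bigr|\leq \varepsilon|a|^{p}+C_\varepsilon|b|^{p}$, the truncation $W_n^{\varepsilon}$, dominated convergence, then $\varepsilon\to0$). All steps check out, including the integrability of $u$ via Fatou and the finiteness of $\sup_n\|u_n-u\|_p^p$ in both regimes $p\geq1$ and $p\in(0,1)$, and your argument actually yields the stronger conclusion $\int_\Omega\bigl||u_n|^{p}-|u_n-u|^{p}-|u|^{p}\bigr|\,\mathrm{d}x\to0$, which implies the stated limit.
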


\begin{lemma}\label{CCP}(Concentration Compactness principle \cite{Lions1,Lions2})
Let ${u_n}\subset H_0^1(\Omega)$ be a bounded sequence with weak limit $u$, $\mu$ and $\nu$
be two nonnegative and bounded measures on $\overline{\Omega}$, such that\\
$(i)$~$|\nabla u_n|^2$ converges in the weak$^{*}$ sense of measures to a measure $\mu$,\\
$(ii)$~$|u_n|^{2^{*}}$ converges in the weak$^{*}$ sense of measures to a measure $\nu$.\\
Then for some at most countable index set $\mathcal{J}$, points $\{x_k\}_{k\in \mathcal{J}}
\subset\overline{\Omega}$, values $\{\mu_k\}_{k\in \mathcal{J}}$, $\{\nu_k\}_{k\in \mathcal{J}}
\subset \mathbb{R}^{+}$, we have
\begin{equation*}
\begin{split}
(1)\ &\mu\geq |\nabla u|^2+\sum_{k\in \mathcal{J}}\mu_k \delta_{x_k}, \ \ \mu_k>0,\\
(2)\ &\nu=|u|^{2^{*}}+\sum_{k\in \mathcal{J}}\nu_k \delta_{x_k}, \ \ \nu_k>0,\\
(3)\ &\mu_k\geq S\nu_k^{\frac{2}{2^{*}}},(k\in \mathcal{J}),
\end{split}
\end{equation*}
where $\delta_{x_k}$ is the Dirac measure mass at $x_k \in \overline{\Omega}$ and $S$ is given by \eqref{S}.
\end{lemma}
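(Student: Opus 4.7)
The plan is to follow the classical Lions strategy: first reduce to analyzing the ``defect measures'' associated with the vanishing part $v_n := u_n - u$, then extract the atomic concentrations via the Sobolev inequality tested against cutoff functions. By Banach--Alaoglu together with the Rellich compactness $H_0^1(\Omega)\hookrightarrow L^q(\Omega)$ for $q<2^*$, I may pass to a subsequence so that $u_n\to u$ a.e.\ in $\Omega$ and in $L^q(\Omega)$ for every $q<2^*$. Set $v_n := u_n - u$, so that $v_n \rightharpoonup 0$ in $H_0^1(\Omega)$, and let $\tilde\mu$, $\tilde\nu$ denote the weak-$*$ limits on $\overline\Omega$ of the bounded measures $|\nabla v_n|^2\,\mathrm{d}x$ and $|v_n|^{2^*}\,\mathrm{d}x$, respectively. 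Br\'ezis--Lieb (Lemma~\ref{Brezis-Lieb}) applied to $|u_n|^{2^*}$ gives $\nu = |u|^{2^*}\,\mathrm{d}x + \tilde\nu$, and expanding $|\nabla u_n|^2 = |\nabla u|^2 + 2\nabla u\cdot\nabla v_n + |\nabla v_n|^2$ while using $\nabla v_n \rightharpoonup 0$ in $L^2(\Omega)$ gives $\mu = |\nabla u|^2\,\mathrm{d}x + \tilde\mu$. It therefore suffices to establish the atomic structure of $\tilde\nu$ and the bound $\tilde\mu \geq \sum_{k\in\mathcal{J}}\mu_k\delta_{x_k}$ with $\mu_k \geq S\nu_k^{2/2^*}$.

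The core estimate comes from testing the Sobolev embedding \eqref{S} against $\phi v_n$ for $\phi\in C_c^\infty(\mathbb{R}^N)$:
\begin{align*}
S\left(\int_\Omega|\phi v_n|^{2^*}\mathrm{d}x\right)^{\!2/2^*} &\leq \int_\Omega|\nabla(\phi v_n)|^2\,\mathrm{d}x\\
&= \int_\Omega|\phi|^2|\nabla v_n|^2\,\mathrm{d}x + 2\int_\Omega\phi v_n\,\nabla\phi\cdot\nabla v_n\,\mathrm{d}x + \int_\Omega|v_n|^2|\nabla\phi|^2\,\mathrm{d}x.
\end{align*}
Since $v_n\to 0$ strongly in $L^2_{\mathrm{loc}}$ and $\{\nabla v_n\}$ is bounded in $L^2(\Omega)$, the last two terms vanish in the limit (the middle one via Cauchy--Schwarz), so passing $n\to\infty$ yields the reverse H\"older inequality
$$
S\left(\int_{\overline\Omega}|\phi|^{2^*}\,\mathrm{d}\tilde\nu\right)^{\!2/2^*} \leq \int_{\overline\Omega}|\phi|^2\,\mathrm{d}\tilde\mu.
$$

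The concluding step is measure-theoretic: this functional inequality forces $\tilde\nu$ to be purely atomic. Approximating indicator functions by smooth cutoffs yields $\tilde\nu(B_r(x))^{1/2^*}\leq S^{-1/2}\,\tilde\mu(B_r(x))^{1/2}$ for every ball, so that $\tilde\nu(B_r(x))/\tilde\mu(B_r(x))\leq S^{-2^*/2}\,\tilde\mu(B_r(x))^{(2^*-2)/2}$. At any $x$ where $\tilde\mu$ has no atom, the right-hand side tends to $0$ with $r$, so the Radon--Nikodym density of the absolutely continuous part of $\tilde\nu$ with respect to $\tilde\mu$ vanishes by Lebesgue differentiation. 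Applying the functional inequality to a delta-approximation also shows $\tilde\nu(\{x\}) = 0$ whenever $\tilde\mu(\{x\}) = 0$, so $\tilde\nu$ is concentrated on the at most countable atom set $\{x_k\}_{k\in\mathcal{J}}$ of $\tilde\mu$, giving $\tilde\nu = \sum_{k\in\mathcal{J}}\nu_k\delta_{x_k}$. Finally, choosing $\phi$ concentrated near $x_k$ produces $\mu_k := \tilde\mu(\{x_k\})\geq S\nu_k^{2/2^*}$, hence $\tilde\mu\geq \sum_k \mu_k\delta_{x_k}$, and conclusion $(1)$ follows from $\mu = |\nabla u|^2\,\mathrm{d}x + \tilde\mu$. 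The main technical obstacle is precisely this atomic decomposition: passing from the functional reverse H\"older inequality to the pointwise Dirac structure requires the Radon--Nikodym/Lebesgue differentiation machinery just outlined, and the strict inequality $2^* > 2$ is essential---the same scheme breaks down for equal or reversed exponents.
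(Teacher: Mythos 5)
The paper does not prove this lemma: it is quoted verbatim from Lions' two 1985 papers (\cite{Lions1,Lions2}) and used as a black box, so there is no internal proof to compare against. Your argument is the classical concentration--compactness proof (the one in Lions' original papers and in Willem's and Struwe's books) and it is essentially correct: the decomposition $\nu=|u|^{2^*}\mathrm{d}x+\tilde\nu$ via Br\'ezis--Lieb, the identity $\mu=|\nabla u|^2\mathrm{d}x+\tilde\mu$ from the weak convergence of the cross term, the reverse H\"older inequality $S\bigl(\int|\phi|^{2^*}\mathrm{d}\tilde\nu\bigr)^{2/2^*}\leq\int|\phi|^2\mathrm{d}\tilde\mu$ obtained by testing the Sobolev inequality on $\phi v_n$ (legitimate here since $\phi v_n\in H_0^1(\Omega)$, so the constant $S$ of \eqref{S} applies), and the deduction of atomicity from $2^*>2$ are all the standard steps. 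Two points deserve slightly more care than your sketch gives them: the passage from smooth test functions to the inequality $\tilde\nu(E)\leq S^{-2^*/2}\tilde\mu(E)^{2^*/2}$ for general Borel sets $E$ (needed both for $\tilde\nu\ll\tilde\mu$ and for the Lebesgue differentiation step) uses outer regularity of the two Radon measures and the fact that at most countably many spheres carry $\tilde\mu$-mass; and the differentiation of $\tilde\nu$ with respect to the non-doubling measure $\tilde\mu$ should be justified by the Besicovitch differentiation theorem rather than the Lebesgue-measure version. Both are routine, and neither affects the validity of the argument.
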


Next, we introduce the Ekeland's variational principle, which will be used to show that any
minimizing sequence $\{u_n\}$ of $I(u)$ within a ball $B_r\subset H_0^1(\Omega)$ satisfies $I'(u_n)\rightarrow0$
in $H^{-1}(\Omega)$ as $n\rightarrow\infty$, where $B_{r}:=\{u\in H_0^1(\Omega): \|u\|\leq r\}$.

\begin{lemma}\label{Ekeland}(Ekeland's variational principle \cite{Ekeland})
Let $V$ be a complete metric space, and $F: V\rightarrow \mathbb{R}\cup\{+\infty\}$ be a lower
semicontinuous function, not identically $+\infty$, and bounded from below. Then for every
$\varepsilon,\delta>0$ and every point $v\in V$ such that
\begin{align*}
\inf\limits_{h\in V} F(h)\leq F(v)\leq\inf\limits_{h\in V} F(h)+\varepsilon,
\end{align*}
there exists some point $u\in V$ such that
\begin{align*}
&F(u)\leq F(v),\\
&d(u,v)\leq\delta,\\
F(w)>&F(u)-(\varepsilon/\delta) d(u,w), \ \ \forall\, w\in V, w\neq u,
\end{align*}
where $d(\cdot,\cdot)$ denotes the distance of two points in V.
\end{lemma}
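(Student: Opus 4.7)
The plan is to deduce the statement from a maximality argument driven by an auxiliary partial order on $V$. Introduce the relation
$$x \preceq y \quad \Longleftrightarrow \quad F(x) + (\varepsilon/\delta)\, d(x,y) \leq F(y).$$
Reflexivity is trivial, transitivity follows from the triangle inequality, and antisymmetry is forced because $x \preceq y$ and $y \preceq x$ together yield $d(x,y) = 0$. The point of this order is that a $\preceq$-minimal element $u$ with $u \preceq v$ already gives the whole conclusion: the inequality $u \preceq v$ reads $F(u) + (\varepsilon/\delta)\, d(u,v) \leq F(v)$, which at once produces $F(u) \leq F(v)$ and, combined with $F(v) \leq \inf_V F + \varepsilon \leq F(u) + \varepsilon$, delivers $d(u,v) \leq \delta$; while minimality is exactly the statement that no $w \neq u$ satisfies $w \preceq u$, i.e.\ $F(w) > F(u) - (\varepsilon/\delta)\, d(u,w)$ for all $w \neq u$.

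The next step is to construct $u$ by iteration. For $x \in V$, let $S(x) := \{y \in V : y \preceq x\}$. Lower semicontinuity of $F$ forces $S(x)$ to be closed: if $y_n \in S(x)$ and $y_n \to y$, then $F(y) \leq \liminf_{n} F(y_n) \leq F(x) - (\varepsilon/\delta)\, d(y,x)$. Set $v_0 := v$, and having chosen $v_n$, pick $v_{n+1} \in S(v_n)$ with $F(v_{n+1}) \leq \inf_{S(v_n)} F + 2^{-n-1}$; the infimum is finite because $F$ is bounded below and $v_n \in S(v_n)$.

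The key analytic step is to show $\{v_n\}$ is Cauchy. The relation $v_{n+1} \preceq v_n$ yields $(\varepsilon/\delta)\, d(v_n, v_{n+1}) \leq F(v_n) - F(v_{n+1})$, so the sequence $\{F(v_n)\}$ is nonincreasing; being bounded below it converges, and the telescoping sum gives $\sum_n d(v_n, v_{n+1}) < \infty$. By completeness of $V$, $v_n \to u$ for some $u \in V$. Because each $S(v_m)$ is closed and contains $v_n$ for $n \geq m$, the limit $u$ lies in $\bigcap_m S(v_m)$; in particular $u \preceq v_0 = v$.

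Finally, I verify that $u$ is $\preceq$-minimal. Suppose $w \preceq u$. Transitivity gives $w \preceq v_n$, so $F(w) \geq \inf_{S(v_n)} F \geq F(v_{n+1}) - 2^{-n-1}$; meanwhile $u \preceq v_{n+1}$ forces $F(u) \leq F(v_{n+1})$, and letting $n \to \infty$ produces $F(w) \geq F(u)$. But $w \preceq u$ reads $F(w) + (\varepsilon/\delta)\, d(w,u) \leq F(u)$, which combined with the previous inequality forces $d(w,u) = 0$, so $w = u$. The main obstacle is the Cauchy estimate in the third step: it is the single place where lower semicontinuity, lower boundedness of $F$, and completeness of $V$ all enter simultaneously, and it is what converts the purely order-theoretic setup into an actual point of $V$.
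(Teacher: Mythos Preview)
Your proof is correct and follows the classical partial-order argument for Ekeland's variational principle. Note, however, that the paper does not supply its own proof of this lemma: it is stated as a known result and cited directly to Ekeland's original article, so there is no in-paper argument to compare against. What you have written is essentially the standard proof (as in Ekeland's 1974 paper or the expositions in Struwe and elsewhere), and all the steps---the order $\preceq$, closedness of $S(x)$ via lower semicontinuity, the Cauchy estimate from the telescoping of $F(v_n)$, and the minimality verification---are handled correctly.
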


To deal with the logarithmic type nonlinearity $u\log u^{2}$, we need the following basic
inequalities and the convergence properties of the logarithmic term.

\begin{lemma}\label{logarithmic inequality}
$(1)$ For all $t>0$, there holds
\begin{equation}\label{2-log1}
t\log t \geq-\frac{1}{e}.
\end{equation}

$(2)$ For each $\delta,\sigma>0$, there exists a constant $C_{\delta,\sigma}$ such that
\begin{equation}\label{2-log2}
|\log t|\leq C_{\delta,\sigma}(t^{\delta}+t^{-\sigma}), \ \ \ \ \ \forall \ t>0.
\end{equation}

$(3)$ For any $\delta>0$, there holds
\begin{equation}\label{2-log3}
\frac{\log t}{t^\delta}\leq \frac{1}{\delta e}, \qquad \forall\ t>0.
\end{equation}
\end{lemma}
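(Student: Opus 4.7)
The plan is to treat each item by elementary single-variable calculus, noting that (1) and (3) follow from finding the global extremum of an explicit one-variable function, while (2) is most easily obtained as a direct consequence of (3) by splitting the range of $t$ at $t=1$.

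For part (1), I would define $f(t)=t\log t$ on $(0,\infty)$ and compute $f'(t)=\log t+1$, so that the unique critical point is $t_0=1/e$. Since $f''(t)=1/t>0$, this is the global minimum on $(0,\infty)$, and evaluating gives $f(t_0)=(1/e)\log(1/e)=-1/e$. Combined with the limits $f(0^+)=0$ and $f(+\infty)=+\infty$, this yields $t\log t\geq -1/e$ for every $t>0$.

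For part (3), I would study $g(t)=t^{-\delta}\log t$ on $(0,\infty)$. On $(0,1]$ the function is non-positive, hence trivially $\leq 1/(\delta e)$, so it suffices to analyze the behavior on $[1,\infty)$. There I would differentiate to obtain $g'(t)=t^{-\delta-1}(1-\delta\log t)$, whose unique zero is at $t_\star=e^{1/\delta}$; since $g'>0$ on $(1,t_\star)$ and $g'<0$ on $(t_\star,\infty)$, the value $g(t_\star)=(1/\delta)e^{-1}=1/(\delta e)$ is the global maximum on $[1,\infty)$, yielding the claimed bound.

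For part (2), given $\delta,\sigma>0$, I would apply (3) on the range $t\geq 1$ (where $\log t\geq 0$) to get $|\log t|=\log t\leq \frac{1}{\delta e}\,t^\delta$. For $0<t<1$, I would substitute $s=1/t>1$ and apply (3) with exponent $\sigma$ to $\log s$, obtaining $|\log t|=-\log t=\log(1/t)\leq \frac{1}{\sigma e}\,t^{-\sigma}$. Choosing $C_{\delta,\sigma}:=\max\bigl\{\tfrac{1}{\delta e},\tfrac{1}{\sigma e}\bigr\}$ and combining the two ranges delivers $|\log t|\leq C_{\delta,\sigma}(t^\delta+t^{-\sigma})$ on all of $(0,\infty)$, because on each piece one of the two terms on the right-hand side already dominates $|\log t|$.

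There is no real obstacle in this lemma; the only mild subtlety is organizational, namely making sure to prove (3) before (2) so that (2) reduces to a two-line splitting argument at $t=1$ rather than a separate asymptotic estimate. I would therefore present the parts in the order (1), (3), (2) rather than the stated order, or else state the proofs in the given order and simply invoke the computation from (3) when handling (2).
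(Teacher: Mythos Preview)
Your argument is correct in all three parts: the calculus-based optimizations for (1) and (3) and the reduction of (2) to (3) by splitting at $t=1$ are standard and complete. The paper itself does not prove this lemma at all; it is stated as a collection of basic inequalities with no proof environment following the statement, so there is nothing to compare against beyond noting that your proof supplies the details the authors omitted.
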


\begin{lemma}\label{2-convergence}
Assume that $\{u_n\}$ is a bounded sequence in $H_0^1(\Omega)$ such that $u_n\rightarrow u$
$a.e.$ in $\Omega$ as $n\rightarrow\infty$. Then we have
\begin{align}\label{2-log-convergence1}
\lim_{n\rightarrow\infty}\int_{\Omega}u_n^{2}\log u_n^2\mathrm{d}x
=\int_{\Omega}u^{2}\log u^2\mathrm{d}x,
\end{align}
and
\begin{align}\label{2-log-convergence2}
\lim_{n\rightarrow\infty}\int_{\Omega}u_n\phi\log u_n^2\mathrm{d}x
=\int_{\Omega}u\phi\log u^2\mathrm{d}x,\qquad \forall\ \phi\in H_0^1(\Omega).
\end{align}
\end{lemma}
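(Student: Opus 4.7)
The plan is to deduce both convergence statements from Vitali's convergence theorem. The almost-everywhere convergence of the integrands $u_n^2\log u_n^2 \to u^2\log u^2$ and $u_n\phi\log u_n^2 \to u\phi\log u^2$ is immediate from the continuity of $t\mapsto t^2\log t^2$ and $t\mapsto t\log t^2$ (extended continuously by $0$ at $t=0$) together with the hypothesis $u_n\to u$ a.e. The real content of the lemma is establishing equi-integrability of the sequences of integrands, and for that the key tool is the logarithmic bound \eqref{2-log2} from Lemma \ref{logarithmic inequality} combined with the Sobolev embedding $H_0^1(\Omega)\hookrightarrow L^{2^*}(\Omega)$, which guarantees that $\{u_n\}$ is bounded in $L^{2^*}(\Omega)$.

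For \eqref{2-log-convergence1}, I would split based on the size of $u_n$. On $\{u_n\le 1\}$, the map $t\mapsto t^2|\log t^2|$ is bounded by a universal constant. On $\{u_n>1\}$, apply \eqref{2-log2} with some small $\delta>0$ chosen so that $2+2\delta<2^*$ (possible for all $N\ge 3$) to obtain $u_n^2|\log u_n^2|\le C u_n^{2+2\delta}$. Combining,
\begin{equation*}
|u_n^2\log u_n^2| \le C + C u_n^{2+2\delta}\quad \text{pointwise}.
\end{equation*}
For any measurable $E\subset\Omega$, Hölder's inequality then gives
\begin{equation*}
\int_E |u_n^2\log u_n^2|\,\mathrm{d}x \le C|E| + C\|u_n\|_{2^*}^{2+2\delta}\,|E|^{1-(2+2\delta)/2^*},
\end{equation*}
and the right-hand side tends to $0$ uniformly in $n$ as $|E|\to 0$. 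This is exactly the equi-integrability hypothesis of Vitali's theorem, and \eqref{2-log-convergence1} follows.

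For \eqref{2-log-convergence2}, the same splitting yields $u_n|\log u_n^2|\le C + C u_n^{1+2\delta}$, hence
\begin{equation*}
|u_n\phi\log u_n^2| \le C|\phi| + C|\phi|\,u_n^{1+2\delta}.
\end{equation*}
The term $C|\phi|$ belongs to $L^1(\Omega)$ (in fact to $L^{2^*}$) by Sobolev embedding, so its integral over $E$ is small whenever $|E|$ is small. For the second term, I would apply Hölder's inequality with conjugate exponents $r'=2^*/(1+2\delta)$ and $r=2^*/(2^*-1-2\delta)$; choosing $\delta>0$ small ensures $r\le 2^*$ so that $\phi\in L^r(\Omega)$, while $(1+2\delta)r'=2^*$ gives uniform control of the $u_n$-factor. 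This yields equi-integrability and Vitali's theorem again delivers the conclusion.

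The only delicate point is the interplay between the near-zero singularity of $\log u_n^2$ and the power growth at infinity: the splitting argument (or equivalently, using both the $t^\delta$ and $t^{-\sigma}$ terms of \eqref{2-log2}) neutralises the logarithm on each regime, after which everything is controlled by Sobolev embedding. No compactness of the embedding is needed, only boundedness in $L^{2^*}$, so the argument goes through without further hypotheses on $\{u_n\}$ beyond what is stated.
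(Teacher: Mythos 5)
Your proof is correct and complete. The paper does not prove the lemma itself but refers to the strategies of $(3.4)$ and $(3.7)$ in \cite{LHW2023}, which rest on the same ingredients you use --- splitting $\Omega$ according to the size of $|u_n|$, the logarithmic bound \eqref{2-log2}, the uniform $L^{2^*}(\Omega)$-bound coming from the Sobolev embedding and the boundedness in $H_0^1(\Omega)$, and a uniform-integrability (Vitali) argument on the finite-measure domain --- so your write-up is essentially the standard route, spelled out in full and with the exponents in the H\"older steps chosen correctly (both hinge only on $2^*>2$, hence work for every $N\geq 3$).
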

\begin{proof}
The proofs of \eqref{2-log-convergence1} and \eqref{2-log-convergence2} follow directly from
the strategies of $(3.4)$ and $(3.7)$ in \cite{LHW2023}.
\end{proof}

Finally, in order to illustrate the multiplicity of solutions to problem \eqref{eq1}, we
give the definition of genus and present some of its properties.

\begin{definition}\label{genus}([Section 7, \cite{1986Rabinowitz}])
Let $E$ be a real Banach space and let $\mathcal{E}$ denote the family of sets $A\subset E \backslash \{0\}$ such
that $A$ is closed in $E$ and symmetric with respect to $0$, i.e. $x \in A$ implies $-x\in A$. For $A\in \mathcal{E}$,
we define the genus of $A$ by the smallest integer $k$ (denoted by $\gamma(A)=k$) such that there exists an odd
continuous mapping from $A$ to $\mathbb{R}^k\backslash\{0\}$. If there does not exist a finite such $k$,
set $\gamma(A)=+\infty$. Finally, set $\gamma(\emptyset)=0$. Moreover, let $\mathcal{E}_k$ denote the family of
closed symmetric subsets $A$ of $E$ such that
$0 \notin A$ and $\gamma(A)\geq k$.
\end{definition}

\begin{proposition} ([Proposition 7.5, \cite{1986Rabinowitz}, Proposition 5.4 \cite{Struwe}])\label{genus-proposition}
Let $A,B\in \mathcal{E}$. Then the following properties hold.\\
$(1)$ If there is an odd continuous mapping from $A$ to $B$, then $\gamma(A)\leq \gamma(B)$.\\
$(2)$ If there is an odd homeomorphism from $A$ onto $B$, then $\gamma(A)=\gamma(B)$.\\
$(3)$ If $\gamma(B)<\infty$, then $\gamma(\overline{A\backslash B})\geq \gamma(A)-\gamma(B)$.\\
$(4)$ The $n$-dimensional sphere $S^n$ has a genus of $n+1$ by the Borsuk-Ulam theorem.\\
$(5)$ If $A$ is compact, then $\gamma(A)<\infty$ and there exists $\delta>0$ such that $N_{\delta}(A)\in \mathcal{E}$ and
$\gamma(N_{\delta}(A))=\gamma(A)$, where $N_{\delta}(A)=\{x\in E: \|x-A\|\leq\delta\}$.
\end{proposition}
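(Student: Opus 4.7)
The plan is to prove the five properties in turn, with the odd Tietze extension lemma (any odd continuous map from a closed symmetric subset of $E$ into some $\mathbb{R}^m$ extends to an odd continuous map on all of $E$) as the single recurring tool. This lemma is itself elementary: one takes an arbitrary Tietze extension $F$ of the given map and replaces it by the antisymmetrization $\tilde F(x):=\tfrac{1}{2}(F(x)-F(-x))$, which is odd, continuous, and agrees with the original odd map on the symmetric domain.

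Properties $(1)$, $(2)$, and $(4)$ will follow directly from composition and the Borsuk--Ulam theorem. For $(1)$, if $\gamma(B)=k<\infty$ and $g\colon B\to\mathbb{R}^k\setminus\{0\}$ is an odd continuous witness, then $g\circ f\colon A\to\mathbb{R}^k\setminus\{0\}$ is odd and continuous, giving $\gamma(A)\le k$; the case $\gamma(B)=\infty$ is trivial. Property $(2)$ comes from applying $(1)$ to both $f$ and $f^{-1}$, noting that the inverse of an odd bijection is odd. For $(4)$, the inclusion $S^n\hookrightarrow\mathbb{R}^{n+1}\setminus\{0\}$ yields $\gamma(S^n)\le n+1$; conversely, an odd continuous $S^n\to\mathbb{R}^k\setminus\{0\}$ with $k\le n$ would, after normalization, produce an odd continuous map $S^n\to S^{k-1}\subset S^{n-1}$, contradicting Borsuk--Ulam.

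For $(3)$, I would set $m=\gamma(\overline{A\setminus B})$ and $n=\gamma(B)$ (assumed finite, else the inequality is vacuous), pick odd continuous witnesses $f\colon\overline{A\setminus B}\to\mathbb{R}^m\setminus\{0\}$ and $g\colon B\to\mathbb{R}^n\setminus\{0\}$, and use the odd Tietze lemma to extend them to odd continuous maps $\tilde f,\tilde g$ defined on $E$. The concatenation $h(x):=(\tilde f(x),\tilde g(x))$ is then an odd continuous map $A\to\mathbb{R}^{m+n}$, and it is nowhere zero on $A$: every $x\in A$ lies either in $B$ (forcing $\tilde g(x)=g(x)\neq 0$) or in $A\setminus B\subset\overline{A\setminus B}$ (forcing $\tilde f(x)=f(x)\neq 0$). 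Hence $\gamma(A)\le m+n$, which rearranges to the desired inequality.

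For $(5)$, compactness of $A$ together with $0\notin A$ gives $d(0,A)>0$, so $A$ can be covered by finitely many pairs of antipodal open balls avoiding the origin; each such pair has genus $1$, and iterating $(3)$ produces finite subadditivity of $\gamma$, forcing $\gamma(A)<\infty$. For the neighborhood statement, set $k=\gamma(A)$, pick an odd witness $f\colon A\to\mathbb{R}^k\setminus\{0\}$, and extend it to an odd continuous $\tilde f\colon E\to\mathbb{R}^k$; compactness of $A$ and continuity of $\tilde f$ provide $\delta>0$ on which $\tilde f$ stays away from $0$ throughout the symmetric closed set $N_\delta(A)$, giving $\gamma(N_\delta(A))\le k$, while the reverse inequality follows from $(1)$ applied to the odd continuous inclusion $A\hookrightarrow N_\delta(A)$. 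The only conceptual obstacle in the entire argument is the odd Tietze extension used in $(3)$ and $(5)$; once that is in hand, the rest is pure composition, covering, and Borsuk--Ulam.
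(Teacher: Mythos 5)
The paper gives no proof of this proposition---it is quoted verbatim as a known result from Rabinowitz (Proposition 7.5) and Struwe (Proposition 5.4)---and your argument is precisely the standard one those references use: monotonicity by composition for $(1)$--$(2)$, the odd Tietze extension plus concatenation for the subadditivity in $(3)$, Borsuk--Ulam for $(4)$, and the finite antipodal-ball cover together with a uniform lower bound on the extended witness for $(5)$. The proof is correct as written; there is nothing to add beyond the routine observations that $\overline{A\setminus B}\subset A$ lies in $\mathcal{E}$ and that the antipodal balls in $(5)$ must be taken with radius less than $d(0,A)$ so that each pair is disjoint and avoids the origin.
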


The following version of the symmetric mountain pass lemma is due to Kajikiya \cite{2005Kajikiya}.

\begin{lemma}\label{symmetric mountain-pass lemma-1}
Let $E$ be an infinite-dimensional Banach space and $I\in C^1(E,\mathbb{R})$ satisfy $(A_1)$ and $(A_2)$ below.\\
$(A_1)$ $I(u)$ is even, bounded from below, $I(0)=0$ and $I(u)$ satisfies the global $(PS)$ condition, i.e. any sequence
$\{u_k\}$ in $E$ such that $\{I(u_k)\}$ is bounded and $I'(u_k)\rightarrow0$ in $E^*$ as $k\rightarrow\infty$ has a
convergent subsequence, where $E^*$ is the dual space of $E$.\\
$(A_2)$ For each $k\in \mathbb{N}$, there exists an $A_k\in \mathcal{E}_k$ such that $\sup\limits_{u\in A_k} I(u) < 0$.\\
Then either $(i)$ or $(ii)$ below holds.\\
$(i)$ There exists a sequence $\{u_k\}$ such that $I'(u_k)=0$, $I(u_k)<0$ and $\{u_k\}$ converges to zero.\\
$(ii)$ There exist two sequences $\{u_k\}$ and $\{v_k\}$ such that  $I'(u_k)=0$, $I(u_k)=0$, $u_k\neq 0$,
$\lim\limits_{k\rightarrow+\infty}u_k=0$,
$I'(v_k)=0$, $I(v_k)<0$, $\lim\limits_{k\rightarrow+\infty}I(v_k)=0$ and $\{v_k\}$ converges to a non-zero limit.
\end{lemma}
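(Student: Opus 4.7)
The plan is to use a symmetric minimax procedure based on the Krasnoselskii genus. For each $k\in\mathbb{N}$ define
$$c_k := \inf_{A\in\mathcal{E}_k}\sup_{u\in A} I(u).$$
Since $\mathcal{E}_{k+1}\subset\mathcal{E}_k$, the sequence $\{c_k\}$ is non-decreasing. By $(A_1)$, $I$ is bounded below, so each $c_k$ is finite; by $(A_2)$, $c_k\leq\sup_{u\in A_k}I(u)<0$ for every $k$. The goal is to show that each $c_k$ is a critical value, that $c_k\to 0^-$, and that an appropriate extraction yields the alternative conclusion.

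The first step is to verify that each $c_k$ is a critical value. I would argue by contradiction: if no critical point sits at level $c_k$, then by the global $(PS)$ condition the set of critical values avoids a neighborhood of $c_k$, and a standard deformation lemma produces an odd continuous map $\eta$ that strictly decreases $I$ below $c_k$ on any symmetric set whose supremum is sufficiently close to $c_k$. Oddness is crucial: by Proposition \ref{genus-proposition} (1), $\eta(A)\in\mathcal{E}_k$ whenever $A\in\mathcal{E}_k$, contradicting the definition of $c_k$ as an infimum. More generally, if $c_k=c_{k+1}=\cdots=c_{k+p-1}=:c$, then combining this deformation with the subadditivity in Proposition \ref{genus-proposition} (3) forces $\gamma(K_c)\geq p$, where $K_c$ denotes the critical set at level $c$; this is how a single accumulated level can supply many distinct critical points.

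The second step is to show $c_k\to 0$. Suppose instead $c_k\to c<0$. By the global $(PS)$ condition, $K_c$ is compact, and because $I(0)=0>c$ we have $0\notin K_c$, so Proposition \ref{genus-proposition} (5) gives $p:=\gamma(K_c)<\infty$ together with a symmetric neighborhood $N_\delta(K_c)$ of the same genus. Applying the deformation lemma \emph{outside} $N_\delta(K_c)$ and using the subadditivity property then contradicts $c_{k+p}=c$ for all $k$ large. Hence $c_k\nearrow 0$. A (PS) extraction at each level yields critical points $u_k$ with $I(u_k)=c_k<0$; depending on whether the $u_k$ themselves converge to $0$ or accumulate at a nonzero limit (with the repeated-level case supplying the auxiliary sequence $\{v_k\}$), one lands in alternative (i) or (ii).

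The main obstacle is engineering the odd deformation flow: one must simultaneously achieve descent of $I$ across a possibly compact but nontrivial critical set, retain oddness so that genus is preserved, and keep the flow localized enough to quantify its effect on the minimax value $c_k$. Once this equivariant deformation lemma is in hand, the remaining work is careful bookkeeping with the monotonicity, subadditivity, and compactness properties of the genus collected in Proposition \ref{genus-proposition}, together with the boundedness of $(PS)$ sequences guaranteed by $(A_1)$.
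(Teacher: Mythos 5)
First, note that the paper itself offers no proof of Lemma \ref{symmetric mountain-pass lemma-1}: it is quoted as a known result of Kajikiya \cite{2005Kajikiya}, so the only ``proof'' in the paper is the citation. Your reconstruction follows exactly the route of Kajikiya's original argument --- the genus minimax levels $c_k=\inf_{A\in\mathcal{E}_k}\sup_{u\in A}I(u)$, an odd deformation to show each $c_k$ is critical and that a level repeated $p$ times forces $\gamma(K_c)\geq p$, and a compactness argument on $K_c$ to show $c_k\nearrow 0^-$. Up to that point the sketch is sound, modulo one imprecision: the contradiction in the second step is not with ``$c_{k+p}=c$ for all $k$ large'' (the $c_k$ need never equal $c$); rather one fixes the $\varepsilon$ of the deformation lemma, picks $k$ with $c_k>c-\varepsilon$ and $A\in\mathcal{E}_{k+p}$ with $\sup_A I<c+\varepsilon$, excises $N_\delta(K_c)$, applies Proposition \ref{genus-proposition} (1) and (3), and concludes $c_k\leq c-\varepsilon$.

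The genuine gap is in the final step, where the alternative $(i)$/$(ii)$ is supposed to emerge. What your argument actually delivers is a sequence of critical points $w_k$ with $I(w_k)=c_k<0$ and $c_k\to 0$; by the global $(PS)$ condition either some subsequence of $\{w_k\}$ converges to $0$, which is precisely $(i)$, or every accumulation point is a nonzero critical point at level $0$, which yields only the sequence $\{v_k\}$ of alternative $(ii)$. But $(ii)$ also asserts the existence of a sequence of \emph{nonzero} critical points $u_k$ with $I(u_k)=0$ exactly and $u_k\to 0$, and nothing in your sketch produces such points. Your parenthetical ``the repeated-level case supplying the auxiliary sequence $\{v_k\}$'' misidentifies the mechanism: repeated levels only give high genus of $K_c$ at a single negative level $c$, hence many critical points there, not critical points at level zero accumulating at the origin. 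In Kajikiya's paper this half of $(ii)$ requires a separate contradiction argument: one assumes there is a small punctured neighbourhood of $0$ containing no critical points with nonpositive energy and uses the deformation flow near the origin to show that the minimax values $c_k$ would then stay bounded away from $0$, contradicting $c_k\to 0^-$. Without this additional step the stated dichotomy --- in particular the conclusion the present paper actually uses, namely the existence of nonzero critical points $u_k\to 0$ with $I(u_k)\leq 0$ --- is not established.
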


We remark that in Lemma \ref{symmetric mountain-pass lemma-1}, the functional $I(u)$ is required to
satisfy the global $(PS)$ condition. However, by a careful examination of the proof of the main
results in \cite{2005Kajikiya}, we find that it is sufficient that the functional $I(u)$ satisfies
the local $(PS)$ condition. Consequently, Kajikiya's conclusion remains true in the following form.

\begin{lemma}\label{symmetric mountain-pass lemma}
Let $E$ be an infinite-dimensional Banach space and $I\in C^1(E,\mathbb{R})$ satisfy the following conditions.\\
$(A_1)$ $I(u)$ is even, bounded from below, $I(0)=0$ and $I(u)$ satisfies the local $(PS)$ condition, i.e.
for some $c^{*}> 0$, any sequence $\{u_k\}$ in $E$ satisfying $\lim\limits_{k\rightarrow+\infty}I(u_k)=c< c^*$ and $\lim\limits_{k\rightarrow+\infty}I'(u_k)=0$ has a convergent subsequence.\\
$(A_2)$ For each $k\in \mathbb{N}$, there exists an $A_k\in \mathcal{E}_k$ such that $\sup\limits_{u\in A_k} I(u) < 0$.\\
Then either $(i)$ or $(ii)$ below holds.\\
$(i)$ There exists a sequence $\{u_k\}$ such that $I'(u_k)=0$, $I(u_k)<0$ and $\{u_k\}$ converges to zero.\\
$(ii)$ There exist two sequences $\{u_k\}$ and $\{v_k\}$ such that  $I'(u_k)=0$, $I(u_k)=0$, $u_k\neq 0$,
$\lim\limits_{k\rightarrow+\infty}u_k=0$,
$I'(v_k)=0$, $I(v_k)<0$, $\lim\limits_{k\rightarrow+\infty}I(v_k)=0$ and $\{v_k\}$ converges to a non-zero limit.
\end{lemma}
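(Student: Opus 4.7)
The plan is to retrace Kajikiya's original argument in \cite{2005Kajikiya} and track each invocation of the $(PS)$ condition, observing that compactness is only needed at energy levels that turn out to be strictly negative. Since the local $(PS)$ hypothesis in $(A_1)$ applies to every sequence whose energies converge to some $c<c^{*}$ with $c^{*}>0$, and in particular to all negative levels, this will suffice for the entire conclusion.

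First, I would set up the genus-based minimax values
\[
c_k := \inf_{A\in\mathcal{E}_k}\sup_{u\in A} I(u), \qquad k\in\mathbb{N}.
\]
The lower boundedness of $I$ in $(A_1)$ gives $c_k>-\infty$, while $(A_2)$ supplies sets $A_k\in\mathcal{E}_k$ on which $I$ is uniformly negative, so $c_k\leq\sup_{A_k}I<0$. The monotonicity of genus implies $\mathcal{E}_{k+1}\subset\mathcal{E}_k$, hence $c_1\leq c_2\leq\cdots<0$, and the sequence converges to some $c_\infty\in[c_1,0]$.

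Next, I would show that each $c_k$ is a critical value of $I$. This is the standard consequence of a pseudo-gradient deformation: were $c_k$ a regular value, one could deform a near-optimal set $A\in\mathcal{E}_k$ with $\sup_A I$ close to $c_k$ into a set contained in $\{I<c_k-\varepsilon\}$ while preserving both the symmetry and the lower bound on the genus (via Proposition \ref{genus-proposition}(1)--(2)), contradicting the definition of $c_k$. The deformation lemma only requires that Palais--Smale sequences at the level $c_k$ admit convergent subsequences; since $c_k<0<c^{*}$, this is exactly what the local $(PS)$ condition in $(A_1)$ provides. Once each $c_k$ is a critical value, the dichotomy between conclusions $(i)$ and $(ii)$ arises in the customary way: either the $c_k$ are all strictly distinct, producing a sequence of critical points $u_k$ with $I(u_k)=c_k<0$, or there exist $k,j$ with $c_k=c_{k+1}=\cdots=c_{k+j}$, in which case the classical estimate $\gamma(K_{c_k})\geq j+1$ for the critical set $K_{c_k}=\{u\in E:I(u)=c_k,\ I'(u)=0\}$ forces $K_{c_k}$ to contain infinitely many elements, yielding case $(ii)$.

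The main obstacle, and the only point where care beyond a mechanical reproduction of \cite{2005Kajikiya} is needed, is confirming that $c_\infty=0$ and that the resulting critical points converge to $0$ in $E$. The argument proceeds by contradiction: if $c_\infty<0$, a diagonal selection of critical points whose energies approach $c_\infty$ yields a Palais--Smale sequence at level $c_\infty<0<c^{*}$, and local $(PS)$ produces a compact critical set $K_{c_\infty}$ of finite genus via Proposition \ref{genus-proposition}(5); a quantitative deformation on the complement of a small neighbourhood $N_\delta(K_{c_\infty})$, combined with Proposition \ref{genus-proposition}(3), then contradicts the definition of $c_k$ for all large $k$. The same strategy shows that any accumulation point of $\{u_k\}$ in case $(i)$ must be a critical point of zero energy; should such an accumulation point be nonzero, one instead extracts the companion sequence $\{v_k\}$ appearing in case $(ii)$ by separating off a symmetric neighbourhood of that nontrivial limit and reapplying the genus-based minimax on the complement. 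Because every use of compactness in the argument occurs at a strictly negative energy level, the local $(PS)$ assumption at levels below $c^{*}>0$ is indeed sufficient, and the conclusion of Lemma \ref{symmetric mountain-pass lemma-1} is recovered verbatim.
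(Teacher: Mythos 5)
Your proposal is correct and follows essentially the same route as the paper, which offers no written proof at all beyond the remark that a careful examination of Kajikiya's argument in \cite{2005Kajikiya} shows the global $(PS)$ condition is only ever invoked at the minimax levels $c_k<0$ and their limit, all of which lie below $c^{*}>0$; your sketch simply makes that examination explicit. One small imprecision: compactness of the critical set may also be needed at the level $c=0$ (not only at strictly negative levels), but since the hypothesis covers all levels $c<c^{*}$ with $c^{*}>0$, this is still within reach and does not affect the argument.
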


It is seen from Lemmas \ref{symmetric mountain-pass lemma-1} and \ref{symmetric mountain-pass lemma}
that in each case $(i)$ or $(ii)$, we have a sequence  of critical points $\{u_k\}$ such that $I(u_k)\leq0$,
$u_k\neq 0$ and $\lim\limits_{k\rightarrow+\infty}u_k=0$.

\par
\section{The case $N=3$}
\setcounter{equation}{0}

In this section, we consider the case $N=3$ and prove Theorems \ref{th1.1}-\ref{th1.3}. In this case, the energy
functional possesses the local minimum structure and mountain pass structure under suitable assumptions.
Combining this with Ekeland's variational principle and energy estimates, we can prove that problem \eqref{eq1}
admits a local minimum solution and a ground state solution. Furthermore, under some more assumptions on the
parameters, it is shown that the local minimum solution is also a ground state solution.

However, although the functional $I(u)$ has a mountain pass structure, it is hard to get the compactness of the
$(PS)_{c_M}$ sequence since $\mu<0$. Consequently, we don't know whether or not there exists a mountain pass type solution.

\subsection{Proof of Theorems \ref{th1.1}, \ref{th1.2} and Remark \ref{remark1.1}}

In this subsection, we give the proof of Theorems \ref{th1.1}, \ref{th1.2} and Remark \ref{remark1.1}. We first prove that
there exist two positive constants $\alpha$ and $\rho$ such that $I(u)\geq \alpha$ for all $u\in H_0^1(\Omega)$ with $\|u\|=\rho$.

\begin{lemma}\label{le3.1}
Suppose that $(b, \lambda,\mu)\in \mathcal{A}_0$.
Then there exist constants $\alpha,\rho>0$ such that $I(u)\geq \alpha$ for all $u\in H_0^1(\Omega)$ with $\|u\|=\rho$.
\end{lemma}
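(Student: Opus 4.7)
The plan is to show that $I(u)$ is bounded below by a purely scalar function of $\|u\|^2$, and then locate a radius at which this scalar function is strictly positive. The key preliminary step is to absorb the linear, the Kirchhoff--logarithmic coupling term and the $\frac{\mu}{2}\|u\|_2^2$ term into a single integrand of the form $-\frac{\mu}{2}\, u^2\bigl(\log u^2+\tfrac{\lambda}{\mu}-1\bigr)$. This is just an algebraic rearrangement once one groups together
\begin{equation*}
-\tfrac{\lambda}{2}\|u\|_2^2+\tfrac{\mu}{2}\|u\|_2^2-\tfrac{\mu}{2}\int_{\Omega}u^{2}\log u^{2}\,\mathrm{d}x
=-\tfrac{\mu}{2}\int_{\Omega}u^{2}\Bigl(\log u^{2}+\tfrac{\lambda}{\mu}-1\Bigr)\mathrm{d}x.
\end{equation*}
The function $h(s)=s(\log s+\tfrac{\lambda}{\mu}-1)$ on $[0,\infty)$ attains its global minimum at $s=e^{-\lambda/\mu}$, with $h(e^{-\lambda/\mu})=-e^{-\lambda/\mu}$. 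Since $\mu<0$, multiplying the pointwise inequality $h(u^{2})\geq -e^{-\lambda/\mu}$ by $-\mu/2>0$ and integrating yields the clean lower bound
\begin{equation*}
-\tfrac{\mu}{2}\int_{\Omega}u^{2}\bigl(\log u^{2}+\tfrac{\lambda}{\mu}-1\bigr)\mathrm{d}x
\;\geq\;\tfrac{\mu}{2}e^{-\lambda/\mu}|\Omega|,
\end{equation*}
which is precisely the negative constant that appears in the definition of $\mathcal{A}_{0}$.

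Next, I would use the Sobolev inequality $\|u\|_{6}^{2}\leq S^{-1}\|u\|^{2}$ (since $N=3$, $2^{*}=6$) to dominate the critical term by $\tfrac{1}{6}S^{-3}\|u\|^{6}$. Putting everything together,
\begin{equation*}
I(u)\;\geq\;g\bigl(\|u\|^{2}\bigr)+\tfrac{\mu}{2}e^{-\lambda/\mu}|\Omega|,
\qquad g(t):=\tfrac{1}{2}t+\tfrac{b}{4}t^{2}-\tfrac{1}{6}S^{-3}t^{3}.
\end{equation*}
A direct differentiation shows that $g$ attains its maximum on $(0,\infty)$ at the unique positive root $t=C_{b,S}$ of $1+bt-S^{-3}t^{2}=0$, that is, $C_{b,S}=\tfrac{1}{2}\bigl(bS^{3}+\sqrt{b^{2}S^{6}+4S^{3}}\bigr)$. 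Using the relation $S^{-3}C_{b,S}^{2}=1+bC_{b,S}$ to eliminate the cubic term, one obtains $g(C_{b,S})=\tfrac{1}{3}C_{b,S}+\tfrac{b}{12}C_{b,S}^{2}$, and a short computation (already recorded in the paper) identifies this with $\tfrac{bS^{3}}{12}+\tfrac{(4+b^{2}S^{3})(bS^{3}+\sqrt{b^{2}S^{6}+4S^{3}})}{24}$.

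Finally, by the hypothesis $(b,\lambda,\mu)\in\mathcal{A}_{0}$, the quantity $g(C_{b,S})+\tfrac{\mu}{2}e^{-\lambda/\mu}|\Omega|$ is strictly positive. I would therefore set $\rho:=\sqrt{C_{b,S}}$ and $\alpha:=g(C_{b,S})+\tfrac{\mu}{2}e^{-\lambda/\mu}|\Omega|>0$; every $u\in H_{0}^{1}(\Omega)$ with $\|u\|=\rho$ then satisfies $I(u)\geq\alpha$. The only subtle point in the whole argument is recognizing the correct algebraic regrouping of the linear plus logarithmic terms so that their pointwise minimum yields exactly the constant $-e^{-\lambda/\mu}|\Omega|$; once this is in hand, the remainder is a standard one-variable maximization whose value is engineered to match the definition of $\mathcal{A}_{0}$. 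Everything else is routine.
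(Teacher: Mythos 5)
Your proposal is correct and follows essentially the same route as the paper: bound the linear-plus-logarithmic block below by $\tfrac{\mu}{2}e^{-\lambda/\mu}|\Omega|$, dominate the critical term via Sobolev, and maximize the resulting one-variable function at $t=C_{b,S}$, so that $\rho=\sqrt{C_{b,S}}$ and $\alpha=\tfrac{1}{3}C_{b,S}+\tfrac{b}{12}C_{b,S}^{2}+\tfrac{\mu}{2}e^{-\lambda/\mu}|\Omega|>0$ by the definition of $\mathcal{A}_{0}$. The only cosmetic difference is that you obtain the constant $-e^{-\lambda/\mu}$ by directly minimizing $s\mapsto s(\log s+\lambda/\mu-1)$, whereas the paper rescales and invokes $t\log t\geq -1/e$; these are the same computation.
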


\begin{proof}
For all $u\in H_0^1(\Omega)\backslash \{0\}$, since $\mu<0$, by using inequality \eqref{2-log1}, we have
\begin{align}\label{log-3-1}
&-\frac{\lambda}{2}\|u\|_{2}^{2}+\frac{\mu}{2}\|u\|_{2}^{2}-\frac{\mu}{2}\int_{\Omega}u^{2}\log u^{2}\mathrm{d}x\nonumber\\
=&-\frac{\mu}{2}\int_{\Omega}u^{2}\left(\frac{\lambda}{\mu}-1+\log u^{2}\right)\mathrm{d}x\\
=&-\frac{\mu}{2}e^{1-\frac{\lambda}{\mu}}\int_{\Omega}e^{\frac{\lambda}{\mu}-1}u^{2}\log \left(e^{\frac{\lambda}{\mu}-1}u^{2}\right)\mathrm{d}x\nonumber\\
\geq& \ \frac{\mu}{2}e^{-\frac{\lambda}{\mu}}|\Omega|.\nonumber
\end{align}
Then it follows from \eqref{S} and \eqref{log-3-1} that
\begin{align}\label{3-ineq-1}
I(u)&=\frac{1}{2}\|u\|^2+\frac{b}{4}\|u\|^4-\frac{\lambda}{2}\|u\|_2^2+\frac{\mu}{2}\|u\|_2^2
-\frac{\mu}{2}\int_{\Omega}u^2\log{u^2}\mathrm{d}x-\frac{1}{6}\|u\|_{6}^{6}\nonumber\\
&\geq \frac{1}{2}\|u\|^2+\frac{b}{4}\|u\|^4+\frac{\mu}{2}e^{-\frac{\lambda}{\mu}}|\Omega|-\frac{1}{6S^3}\|u\|^{6}.
\end{align}
Set
\begin{align*}
f(t)=\frac{1}{2}t^2+\frac{b}{4}t^4+\frac{\mu}{2}e^{-\frac{\lambda}{\mu}}|\Omega|-\frac{1}{6S^{3}}t^{6}, \ \ \ t>0,
\end{align*}
$\rho:=\left(\frac{bS^3+\sqrt{b^2S^6+4S^3}}{2}\right)^{1/2}$ and $\alpha:=\dfrac{bS^3}{12}+\dfrac{(4+b^2S^3)(bS^3+\sqrt{b^2S^6+4S^3})}{24}+\dfrac{\mu}{2}e^{-\frac{\lambda}{\mu}}|\Omega|$.
Since $(b, \lambda,\mu)\in \mathcal{A}_0$, by a direct calculation we have $I(u)\geq f(\rho)=\alpha>0$ for all
$\|u\|=\rho$. This completes the proof.
\end{proof}

Define
$$c_{\rho}:=\inf_{u\in B_{\rho}}I(u), \ \ \ \ \ \ \ \ \ \ \ \ \ \ \ \ \ c_{\mathcal{K}}:=\inf_{u\in \mathcal{K}}I(u),$$
where $\rho>0$ is given in the proof of Lemma \ref{le3.1}, $B_{\rho}:=\{u\in H_0^1(\Omega): \|u\|\leq \rho\}$ and
$\mathcal{K}=\{u\in H_0^1(\Omega)\backslash \{0\}:I'(u)=0\}$. Notice that $\widetilde{u}\in\mathcal{K}$,
where $\widetilde{u}$ is the local minimum solution obtained in Theorem \ref{th1.1}. Hence $\mathcal{K}$ is
nonempty once Theorem \ref{th1.1} is proved.

Next, we show that both $c_{\rho}$ and $c_{\mathcal{K}}$ are well defined and negative.

\begin{lemma}\label{le3.2}
Suppose that $b>0$, $\lambda\in \mathbb{R}$ and $\mu<0$. Then we have
$$-\infty <c_{\mathcal{K}},c_{\rho}<0.$$
\end{lemma}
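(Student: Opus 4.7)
The plan is to split the statement into the four inequalities $c_\rho < 0$, $c_\rho > -\infty$, $c_{\mathcal{K}} < 0$, $c_{\mathcal{K}} > -\infty$ and dispatch each using ingredients already prepared in Lemma \ref{le3.1}, a test-function argument, and a Nehari-type identity. The lower bound $c_\rho > -\infty$ is immediate from inequality \eqref{3-ineq-1} in the proof of Lemma \ref{le3.1}: on $B_\rho$ the right-hand side there is bounded from below since every term is controlled by $\|u\| \leq \rho$. For the strict upper bound $c_\rho < 0$, I would fix an arbitrary $u_0 \in H_0^1(\Omega)\setminus\{0\}$ and expand $I(tu_0)$ as $t \to 0^+$. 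Using $\log(tu_0)^2 = \log t^2 + \log u_0^2$, the logarithmic term of $I$ produces a contribution $-\tfrac{\mu}{2}\, t^2 \log t^2 \, \|u_0\|_2^2$, so that
$$I(tu_0) = -\tfrac{\mu}{2}\, t^2 \log t^2 \, \|u_0\|_2^2 + O(t^2)$$
as $t \to 0^+$. Since $\mu < 0$ and $|t^2 \log t^2|$ is of strictly larger order than $t^2$ near the origin, the leading term is negative and dominates the $O(t^2)$ remainder, forcing $I(tu_0) < 0$ for sufficiently small $t > 0$. Choosing $t$ so that $\|tu_0\| \leq \rho$ then yields $c_\rho < 0$.

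For the bounds on $c_{\mathcal{K}}$, the upper bound $c_{\mathcal{K}} < 0$ follows once Theorem \ref{th1.1} is proved: the local minimum solution $\widetilde{u}$ belongs to $\mathcal{K}$ and satisfies $I(\widetilde{u}) = c_\rho < 0$, whence $c_{\mathcal{K}} \leq c_\rho < 0$. For the lower bound, I would apply the Nehari-type identity $\langle I'(u), u\rangle = 0$ on $\mathcal{K}$ to kill the critical term, writing
$$I(u) = I(u) - \tfrac{1}{6}\langle I'(u), u\rangle = \tfrac{1}{3}\|u\|^2 + \tfrac{b}{12}\|u\|^4 + \Bigl(\tfrac{\mu}{2} - \tfrac{\lambda}{3}\Bigr)\|u\|_2^2 - \tfrac{\mu}{3}\int_\Omega u^2\log u^2\,\mathrm{d}x.$$
The logarithmic integral is controlled below via \eqref{2-log1}, which yields $\int_\Omega u^2 \log u^2\,\mathrm{d}x \geq -|\Omega|/e$, while $\|u\|_2^2 \leq \lambda_1^{-1}\|u\|^2$ by Poincar\'e's inequality. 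Combining these produces an estimate of the form $I(u) \geq \tfrac{b}{12}\|u\|^4 - C_1\|u\|^2 - C_2$ with constants depending only on $b, \lambda, \mu, |\Omega|$, which is bounded from below uniformly in $u$ because the quartic term dominates the quadratic one. Hence $c_{\mathcal{K}} > -\infty$.

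The main obstacle is the negativity of $c_\rho$: one must exploit the precise asymptotic that $t^2 \log t^2$ tends to $0$ slower than $t^2$ as $t \to 0^+$, and use $\mu < 0$ to fix the sign correctly. This is exactly the mechanism the logarithmic perturbation buys over the pure critical Kirchhoff problem, destroying the would-be local-minimum character of $u = 0$ and allowing the energy to dip below zero arbitrarily close to the origin.
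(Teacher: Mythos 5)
Your proposal is correct and follows essentially the same route as the paper: $c_\rho>-\infty$ from \eqref{3-ineq-1}, negativity of $c_\rho$ via the scaling $I(tu_0)$ where the term $-\tfrac{\mu}{2}t^2\log t^2\,\|u_0\|_2^2$ dominates as $t\to 0^+$, the bound $c_{\mathcal{K}}\leq c_\rho<0$ through the local minimizer of Theorem \ref{th1.1} (the paper relies on this in exactly the same, non-circular way), and the identity $I(u)-\tfrac{1}{6}\langle I'(u),u\rangle$ to bound $c_{\mathcal{K}}$ from below. The only immaterial difference is in that last step: the paper groups the $\lambda$-, $\mu$- and logarithmic terms into one weighted expression and applies $t\log t\geq -1/e$ to get the $u$-independent constant $\tfrac{\mu}{3}e^{\frac{1}{2}-\frac{\lambda}{\mu}}|\Omega|$ directly, whereas you estimate the logarithmic integral and $\|u\|_2^2$ separately and let the Kirchhoff term $\tfrac{b}{12}\|u\|^4$ absorb the quadratic contribution; both are valid.
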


\begin{proof}
(I) We first prove $-\infty<c_\rho<0$. The left part follows easily from inequality \eqref{3-ineq-1}
and the definition of $c_{\rho}$. To show that $c_{\rho}<0$, for any $u\in H_0^1(\Omega)\setminus\{0\}$,
consider $I(tu)$ with $t>0$, i.e.,
\begin{align*}
I(tu)=t^2\left(\frac{1}{2}\| u\|^{2}+\frac{b}{4}t^2\|u\|^4-\frac{\lambda}{2}\|u\|_{2}^{2}-\frac{\mu}{2}\log t^2\|u\|_{2}^{2}
+\frac{\mu}{2}\int_{\Omega}u^{2}(1-\log u^{2})\mathrm{d}x-\frac{1}{6}t^{4}\|u\|_{6}^{6}\right).
\end{align*}
Since $\mu<0$, we deduce that there exists a $t_u>0$ suitably small such that $\|t_u u\|\leq \rho$ and $I(t_uu)<0$,
which implies that $c_{\rho}<0$.

(II) Next we show that $-\infty<c_{\mathcal{K}}<0$. We need only to prove $c_{\mathcal{K}}>-\infty$,
since $c_\mathcal{K}\leq c_\rho<0$. Noticing that $\mu<0$, by an argument similar to the proof of \eqref{log-3-1}, we have
\begin{align}\label{ineq-3-1}
-\frac{\lambda}{3}\|u\|_{2}^{2} +\frac{\mu}{2}\|u\|_{2}^{2}-\frac{\mu}{3}\int_{\Omega}u^{2}\log u^{2}\mathrm{d}x
\geq\frac{\mu}{3}e^{\frac{1}{2}-\frac{\lambda}{\mu}}|\Omega|.
\end{align}
Consequently, for any $u\in\mathcal{K}$, it follows from \eqref{ineq-3-1} that
\begin{align}\label{3-ineq-11}
I(u)&=I(u)-\frac{1}{6}\langle I'(u),u\rangle\nonumber\\
&=\frac{1}{3}\|u\|^{2}+\frac{b}{12}\|u\|^{4}-\frac{\lambda}{3}\|u\|_{2}^{2} +\frac{\mu}{2}\|u\|_{2}^{2}
-\frac{\mu}{3}\int_{\Omega}u^{2}\log u^{2}\mathrm{d}x\nonumber\\
&\geq-\frac{\lambda}{3}\|u\|_{2}^{2} +\frac{\mu}{2}\|u\|_{2}^{2}-\frac{\mu}{3}\int_{\Omega}u^{2}\log u^{2}\mathrm{d}x\\
&\geq\frac{\mu}{3}e^{\frac{1}{2}-\frac{\lambda}{\mu}}|\Omega|. \nonumber
\end{align}
Thus $c_{\mathcal{K}}>-\infty$. The proof is complete.
\end{proof}

Based on Lemma \ref{le3.1} and the energy estimate $-\infty <c_{\rho}<0$, we can apply the Eklend's variational principle
and Br\'{e}zis-Lieb's lemma to prove the compactness of the minimizing sequence for $c_\rho$.

\begin{lemma}\label{le3.3}
Assume that $(b, \lambda,\mu)\in \mathcal{A}_0$.
Then there exists a minimizing sequence $\{u_n\}$ for $c_\rho$ and a $\widetilde{u}\in H_0^1(\Omega)$ such that,
up to a subsequence, $u_n \rightarrow \widetilde{u}$ in $H_0^1(\Omega)$ as $n\rightarrow\infty$.
\end{lemma}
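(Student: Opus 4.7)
The plan is to extract a Palais--Smale sequence at level $c_\rho$ via Ekeland's variational principle on the ball $B_\rho$, then rule out loss of compactness by combining Br\'ezis--Lieb type decompositions, the logarithmic convergence of Lemma~\ref{2-convergence}, and the geometric constraint $\|u_n\|\leq\rho$, all anchored on the identity $\rho^2=C_{b,S}$.

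First, I would apply Lemma~\ref{Ekeland} to $I$ on the complete metric space $B_\rho$. Lemmas~\ref{le3.1} and~\ref{le3.2} provide the topological picture needed to make this useful: $I\geq\alpha>0$ on $\partial B_\rho$ while $c_\rho<0$, so any near-minimizer satisfies $\|u_n\|<\rho$ for $n$ large. The Ekeland inequality can then be tested in arbitrary directions of $H_0^1(\Omega)$, giving $I'(u_n)\to 0$ in $H^{-1}(\Omega)$. From boundedness of $\{u_n\}$ one obtains, up to a subsequence, $u_n\rightharpoonup\widetilde{u}$ in $H_0^1(\Omega)$, together with a.e.\ convergence and strong $L^q$-convergence for every $q<2^*$.

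Set $A:=\lim\|u_n-\widetilde{u}\|^2$ and $B:=\lim\|u_n-\widetilde{u}\|_6^6$ after a further subsequence, so $\|u_n\|^2\to\|\widetilde{u}\|^2+A$. Passing to the limit in $\langle I'(u_n),\phi\rangle\to 0$ and using Lemma~\ref{2-convergence} for the logarithmic term identifies $\widetilde{u}$ as a weak solution of the perturbed equation
\begin{equation*}
-\bigl(1+b(\|\widetilde{u}\|^2+A)\bigr)\Delta\widetilde{u}=\lambda\widetilde{u}+\mu\widetilde{u}\log\widetilde{u}^2+|\widetilde{u}|^{4}\widetilde{u},
\end{equation*}
whose energy identity (obtained by testing with $\widetilde{u}$) I would subtract from the Br\'ezis--Lieb expansion of $\langle I'(u_n),u_n\rangle\to 0$. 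After applying Lemma~\ref{Brezis-Lieb} to the quadratic, quartic and critical parts, strong $L^2$-convergence to the linear term, and Lemma~\ref{2-convergence} to the logarithmic term, all $\widetilde{u}$-dependent contributions should cancel and leave the single scalar relation
\begin{equation*}
A\bigl(1+b(A+\|\widetilde{u}\|^2)\bigr)=B.
\end{equation*}

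The contradiction step would then run as follows: assuming $A>0$, the Sobolev bound $A\geq SB^{1/3}$ from \eqref{S} combined with the above relation gives $A^2-bS^3A-S^3\geq bS^3\|\widetilde{u}\|^2\geq 0$, so $A\geq C_{b,S}$; but $\{u_n\}\subset B_\rho$ and $\rho^2=C_{b,S}$ force $A+\|\widetilde{u}\|^2\leq C_{b,S}$, whence $\widetilde{u}\equiv 0$ and $A=C_{b,S}$, yielding $B=C_{b,S}^{3}/S^{3}$. Expanding $I(u_n)\to c_\rho$ by the same Br\'ezis--Lieb procedure and substituting these values gives $c_\rho=\frac{1}{3}C_{b,S}+\frac{b}{12}C_{b,S}^{2}>0$, contradicting $c_\rho<0$. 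Hence $A=0$ and $u_n\to\widetilde{u}$ strongly in $H_0^1(\Omega)$. The main obstacle is the cancellation in the previous paragraph: one has to arrange that the three Br\'ezis--Lieb splittings, the extra $bA$ coefficient appearing in the perturbed limit equation for $\widetilde{u}$ (due to the Kirchhoff nonlocality), and the indefinite logarithmic term all combine so that compactness reduces to the single relation above; from there the identity $\rho^2=C_{b,S}$ makes the Sobolev bound and the geometric constraint collide exactly at the critical threshold. Note that the hypothesis $(b,\lambda,\mu)\in\mathcal{A}_0$ (equivalent to $\alpha>0$) enters only in the first step; the final contradiction depends only on $\mu<0$ and on the definition of $\rho$.
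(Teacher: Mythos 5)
Your argument is correct, and the Ekeland step coincides with the paper's, but your compactness step takes a genuinely different route. The paper never invokes the Sobolev inequality or the precise value of $\rho$ at this point: writing $w_n=u_n-\widetilde{u}$ and $l=\lim\|w_n\|^2$, it uses the relation $\|w_n\|^2+b\|w_n\|^4+b\|w_n\|^2\|\widetilde{u}\|^2-\|w_n\|_6^6=o_n(1)$ (your $A(1+b(A+\|\widetilde{u}\|^2))=B$) only to rewrite the Br\'ezis--Lieb expansion of $I(u_n)$ as $c_\rho=I(\widetilde{u})+\tfrac{1}{3}l+\tfrac{b}{3}l\|\widetilde{u}\|^2+\tfrac{b}{12}l^2$; if $l>0$ the defect is strictly positive, so $c_\rho>I(\widetilde{u})$, which contradicts $I(\widetilde{u})\geq c_\rho$ (valid since $\|\widetilde{u}\|\leq\rho$ by weak lower semicontinuity). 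Your version instead pushes the same relation through the Sobolev bound $A\geq SB^{1/3}$ to get $A\geq C_{b,S}$, collides this with the geometric constraint $A+\|\widetilde{u}\|^2\leq\rho^2=C_{b,S}$ to force $\widetilde{u}=0$ and $A=C_{b,S}$, and then computes $c_\rho=\tfrac{1}{3}C_{b,S}+\tfrac{b}{12}C_{b,S}^2>0$, contradicting $c_\rho<0$. Both contradictions are sound. The paper's argument is shorter and more robust (it would work for any radius at which $I>0$ on the sphere and $c_\rho<0$ inside, without needing $\rho^2=C_{b,S}$ exactly), while yours identifies the concentration threshold explicitly and is closer in spirit to the concentration-compactness argument the paper uses later for the ground state (Lemma~\ref{le3.4}), where the quantity $\tfrac{1}{3}C_{b,S}+\tfrac{b}{12}C_{b,S}^2$ indeed reappears. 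One small caveat: your closing remark that $\mathcal{A}_0$ ``enters only in the first step'' is true for the paper's route as well, but in your route the positivity of $\alpha$ is also what guarantees, via continuity of $f$, that near-minimizers stay a fixed distance inside the sphere, so Ekeland perturbations remain admissible; it is worth stating that explicitly.
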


\begin{proof} Let $\{v_n\}\subset B_\rho$ be a minimizing sequence for $c_\rho$. Without
loss of generality, we may assume that $c_\rho\leq I(v_n)\leq c_\rho+\dfrac{1}{n}$ for all
$n\in\mathbb{N}$. According to \eqref{3-ineq-1} and the energy estimate $-\infty <c_{\rho}<0$,
one knows that there is a constant $\tau>0$ suitably small such
that $\|v_n\|\leq\rho-\tau$ for all $n$ large enough. Then using Lemma \ref{Ekeland},
we deduce that there exists another minimizing sequence $\{u_n\}\subset B_\rho$ for $c_\rho$ such that
\begin{align}\label{3-Ekeland}
&I(u_n)\leq I(v_n),\nonumber\\
&\|u_n-v_n\|\leq\frac{\tau}{2},\nonumber\\
I(w)>&I(u_n)-\frac{2}{n\tau} \|w-u_n\|,\qquad\forall\ w\in B_\rho,\ w\neq u_n.
\end{align}
It is easy to see that $\|u_n\|\leq \rho-\frac{\tau}{2}$. Now, we prove that $I'(u_n)\rightarrow 0$
as $n\rightarrow \infty$. For any $h\in H_0^1(\Omega)$ with $\|h\|=1$, set $z(t)=u_n+th$, $t\in(0,\frac{\tau}{2}]$.
Then $\|z(t)\|\leq \rho,\ \forall\ t\in(0,\frac{\tau}{2}]$. Taking $w=z(t)$ in \eqref{3-Ekeland} shows that
\begin{align}\label{3-Ekeland-ineq-1}
\frac{I(z(t))-I(u_n)}{t}>-\frac{2}{n\tau}.
\end{align}
Letting $t\rightarrow0^+$ in \eqref{3-Ekeland-ineq-1}, we arrive at
\begin{align}\label{3-Ekeland-ineq-2}
\langle I'(u_n), h \rangle\geq-\frac{2}{n\tau}.
\end{align}
Replacing $h$ by $-h$ in \eqref{3-Ekeland-ineq-2}, one has
\begin{align*}
\langle I'(u_n), h \rangle\leq\frac{2}{n\tau},
\end{align*}
which, together with \eqref{3-Ekeland-ineq-2} and the arbitrariness of $h$, implies $I'(u_n)\rightarrow 0$ as $n\rightarrow \infty$.

Since $\{u_n\}$ is bounded in $H_0^1(\Omega)$,  we may assume that there exist a subsequence
of $\{u_n\}$ (still denoted by $\{u_n\}$) and a $\widetilde{u}\in H_0^1(\Omega)$ such that,
as $n\rightarrow\infty$,
\begin{equation}\label{3-convergence}
\begin{cases}
u_{n}\rightharpoonup \widetilde{u} \ in \ H_{0}^1(\Omega),\\
u_{n}\rightarrow \widetilde{u} \ in \ L^p(\Omega), \ \ 1\leq p<6,\\
u_n^5\rightharpoonup \widetilde{u}^5 \ in \ L^{\frac{6}{5}}(\Omega),\\
u_{n}\rightarrow \widetilde{u} \ \ a.e.\ in \ \Omega.
\end{cases}
\end{equation}
Set $w_n=u_n-\widetilde{u}$. Then $\{w_n\}$ is also bounded in $H_0^1(\Omega)$. Hence, we may assume
without loss of generality that
\begin{align}\label{3-wn-1}
\lim\limits_{n\rightarrow\infty}\|w_n\|^2=l\geq0.
\end{align}

We claim $l=0$. Assume on the contrary that $l>0$. Since $u_{n}\rightharpoonup \widetilde{u}$
in $H_{0}^1(\Omega)$ as $n\rightarrow\infty$, we have
\begin{align}\label{3-wn-2}
\|u_n\|^{2}=\| w_n\|^2+\| \widetilde{u}\|^{2}+o_n(1), \qquad  n\rightarrow\infty.
\end{align}
It then follows from \eqref{2-log-convergence2}, \eqref{3-convergence}, \eqref{3-wn-2} and $I'(u_n)\rightarrow0$
as $n\rightarrow\infty$ that
\begin{align}\label{3-Iun-u}
o_n(1)=&\langle I'(u_n),\widetilde{u}\rangle\nonumber\\
=&\left(1+b\|u_n\|^2\right)\int_{\Omega}\nabla u_n\nabla \widetilde{u}\mathrm{d}x-\lambda\int_{\Omega}u_n \widetilde{u}\mathrm{d}x
-\mu\int_{\Omega}u_n \widetilde{u}\log u_n^2\mathrm{d}x-\int_{\Omega}u_n^5 \widetilde{u}\mathrm{d}x\nonumber\\
=&\|\widetilde{u}\|^2+b\|\widetilde{u}\|^4+b\|w_n\|^2\|\widetilde{u}\|^2-\lambda\|\widetilde{u}\|_2^2
-\mu\int_{\Omega}\widetilde{u}^2\log \widetilde{u}^2\mathrm{d}x-\|\widetilde{u}\|_6^6+o_n(1).
\end{align}
On the other hand, by recalling $I'(u_n)\rightarrow0$ as $n\rightarrow\infty$, \eqref{2-log-convergence1},  \eqref{3-convergence},
\eqref{3-wn-2}, \eqref{3-Iun-u} and Lemma \ref{Brezis-Lieb}, we get, as $n\rightarrow\infty$,
\begin{align}\label{3-Iun-un}
o_n(1)=&\langle I'(u_n),u_n\rangle\nonumber\\
=&\left(1+b\|u_n\|^2\right)\|u_n\|^2-\lambda \|u_n\|_2^2-\mu\int_{\Omega}u_n^2\log u_n^2\mathrm{d}x-\|u_n\|_6^6\nonumber\\
=&\|w_n\|^2+\|\widetilde{u}\|^2+b\|w_n\|^4+2b\|w_n\|^2\|\widetilde{u}\|^2+b\|\widetilde{u}\|^4-\lambda\|\widetilde{u}\|_2^2\nonumber\\
&-\mu\int_{\Omega}\widetilde{u}^2\log \widetilde{u}^2\mathrm{d}x-\|w_n\|_6^6-\|\widetilde{u}\|_6^6+o_n(1)\nonumber\\
=&\|w_n\|^2+b\|w_n\|^4+b\|w_n\|^2\|\widetilde{u}\|^2-\|w_n\|_6^6+o_n(1).
\end{align}
In view of \eqref{2-log-convergence1}, \eqref{3-convergence}, \eqref{3-wn-2}, \eqref{3-Iun-un} and Lemma \ref{Brezis-Lieb},
one obtains
\begin{align}\label{3-Iun=c}
c_{\rho}+ o_n(1)=I(u_n)=&\frac{1}{2}\|u_n\|^2+\frac{b}{4}\|u_n\|^4-\frac{\lambda}{2}\|u_n\|_2^2+\frac{\mu}{2}\|u_n\|_2^2
-\frac{\mu}{2}\int_{\Omega}u_n^2\log{u_n^2}\mathrm{d}x-\frac{1}{6}\|u_n\|_{6}^{6}\nonumber\\
=&\frac{1}{2}\|w_n\|^2+\frac{1}{2}\|\widetilde{u}\|^2+\frac{b}{4}\|w_n\|^4+\frac{b}{4}\|\widetilde{u}\|^4+\frac{b}{2}\|w_n\|^2\|\widetilde{u}\|^2
-\frac{\lambda}{2}\|\widetilde{u}\|_2^2\nonumber\\
&+\frac{\mu}{2}\|\widetilde{u}\|_2^2-\frac{\mu}{2}\int_{\Omega}\widetilde{u}^2\log{\widetilde{u}^2}\mathrm{d}x-\frac{1}{6}\|w_n\|_{6}^{6}
-\frac{1}{6}\|\widetilde{u}\|_{6}^{6}+o_n(1)\nonumber\\
=&I(\widetilde{u})+\frac{1}{2}\|w_n\|^2+\frac{b}{4}\|w_n\|^4+\frac{b}{2}\|w_n\|^2\|\widetilde{u}\|^2-\frac{1}{6}\|w_n\|_{6}^{6}
+o_n(1)\nonumber\\
=&I(\widetilde{u})+\frac{1}{6}\langle I'(u_n),u_n\rangle+\frac{1}{3}\|w_n\|^2
+\frac{b}{3}\|w_n\|^2\|\widetilde{u}\|^2+\frac{b}{12}\|w_n\|^4+o_n(1)\nonumber\\
=&I(\widetilde{u})+\frac{1}{3}\|w_n\|^2+\frac{b}{3}\|w_n\|^2\|\widetilde{u}\|^2+\frac{b}{12}\|w_n\|^4+o_n(1).
\end{align}
Letting $n\rightarrow\infty$ in \eqref{3-Iun=c} and noting that $b,l>0$, we have
\begin{align}\label{3-inequation}
c_{\rho}=I(\widetilde{u})+\frac{1}{3}l+\frac{b}{3}l\|\widetilde{u}\|^2+\frac{b}{12}l^2>I(\widetilde{u}).
\end{align}
However, by the weak lower semi-continuity of the norm, we have $\|\widetilde{u}\|\leq\rho$.
Consequently, $I(\widetilde{u})\geq c_{\rho}$, which contradicts with \eqref{3-inequation}.
Thus $l=0$, i.e., $u_n\rightarrow \widetilde{u}$ in $H_0^1(\Omega)$ as $n\rightarrow\infty$. The proof is complete.
\end{proof}

\begin{lemma}\label{le3.4}
Assume that $(b, \lambda,\mu)\in \mathcal{A}_1$.
Then there exists a minimizing sequence $\{u_n\}\subset\mathcal{K}$ for $c_{\mathcal{K}}$ and a $u_g\in H_0^1(\Omega)$ such that,
up to a subsequence, $u_n \rightarrow u_g$ in $H_0^1(\Omega)$ as $n\rightarrow\infty$.
\end{lemma}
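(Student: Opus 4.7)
My plan is to imitate the argument of Lemma \ref{le3.3}, replacing the role of Ekeland's variational principle by the fact that every element of the minimizing sequence is already a critical point. Starting with any sequence $\{u_n\}\subset\mathcal{K}$ with $I(u_n)\to c_{\mathcal{K}}$, I note that since $I'(u_n)=0$ the Nehari-type identity $I(u_n)=I(u_n)-\tfrac{1}{6}\langle I'(u_n),u_n\rangle$ combined with the logarithmic estimate \eqref{ineq-3-1} (exactly as in \eqref{3-ineq-11}) yields the a priori bound $\tfrac{1}{3}\|u_n\|^2+\tfrac{b}{12}\|u_n\|^4\le c_{\mathcal{K}}-\tfrac{\sqrt{e}\,\mu}{3}e^{-\lambda/\mu}|\Omega|+o_n(1)$, so $\{u_n\}$ is bounded in $H_0^1(\Omega)$. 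Passing to a subsequence, $u_n\rightharpoonup u_g$ in $H_0^1(\Omega)$, $u_n\to u_g$ in $L^p(\Omega)$ for $1\le p<6$, and a.e.\ in $\Omega$. I write $w_n:=u_n-u_g$ and $l:=\lim_n\|w_n\|^2\ge 0$; the goal is to show $l=0$.

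The principal obstacle is the nonlocal Kirchhoff coefficient: because $\|u_n\|^2\to\|u_g\|^2+l$, passing to the limit in $\langle I'(u_n),\phi\rangle=0$ (using weak convergence for the gradient term, strong $L^2$ convergence, Lemma \ref{2-convergence} for the logarithmic part, and $u_n^5\rightharpoonup u_g^5$ in $L^{6/5}(\Omega)$) only produces the perturbed equation
\begin{equation*}
(1+b\|u_g\|^2+bl)\int_{\Omega}\nabla u_g\nabla\phi\,\mathrm{d}x=\lambda\int_{\Omega}u_g\phi\,\mathrm{d}x+\mu\int_{\Omega}u_g\phi\log u_g^2\,\mathrm{d}x+\int_{\Omega}u_g^5\phi\,\mathrm{d}x
\end{equation*}
for all $\phi\in H_0^1(\Omega)$. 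Testing with $\phi=u_g$ gives the crucial identity $\langle I'(u_g),u_g\rangle=-bl\|u_g\|^2$, so $u_g$ is a genuine critical point of $I$ exactly when $l=0$.

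Suppose for contradiction that $l>0$. Plugging Lemma \ref{Brezis-Lieb} (applied to the $L^6$ norm) and the expansion $\|u_n\|^4=\|w_n\|^4+2\|w_n\|^2\|u_g\|^2+\|u_g\|^4+o_n(1)$ into $\langle I'(u_n),u_n\rangle=0$ and substituting the above Nehari identity for $u_g$, I derive $\lim_n\|w_n\|_6^6=l+bl^2+bl\|u_g\|^2$. The Sobolev bound $\|w_n\|_6^6\le S^{-3}\|w_n\|^6$ then forces $l^2-bS^3 l-S^3\ge 0$, hence $l\ge C_{b,S}$. Applying the same Br\'ezis-Lieb expansion to $I(u_n)$ itself yields
\begin{equation*}
c_{\mathcal{K}}=I(u_g)+\tfrac{1}{3}l+\tfrac{b}{12}l^2+\tfrac{b}{3}l\|u_g\|^2,
\end{equation*}
while combining $\langle I'(u_g),u_g\rangle=-bl\|u_g\|^2$ with \eqref{ineq-3-1} gives the lower bound $I(u_g)\ge\tfrac{1}{3}\|u_g\|^2+\tfrac{b}{12}\|u_g\|^4-\tfrac{bl}{6}\|u_g\|^2+\tfrac{\sqrt{e}\,\mu}{3}e^{-\lambda/\mu}|\Omega|$. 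Adding the two, the spurious $-\tfrac{bl}{6}\|u_g\|^2$ combines with $\tfrac{b}{3}l\|u_g\|^2$ to leave a non-negative term $\tfrac{bl}{6}\|u_g\|^2$, and the remaining $u_g$-contributions are non-negative, so dropping them and using monotonicity in $l$ together with $l\ge C_{b,S}$ produces
\begin{equation*}
c_{\mathcal{K}}\ge\tfrac{1}{3}C_{b,S}+\tfrac{b}{12}C_{b,S}^2+\tfrac{\sqrt{e}\,\mu}{3}e^{-\lambda/\mu}|\Omega|\ge 0
\end{equation*}
by the definition of $\mathcal{A}_1$, contradicting $c_{\mathcal{K}}<0$ from Lemma \ref{le3.2}.

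Therefore $l=0$, $u_n\to u_g$ strongly in $H_0^1(\Omega)$, the perturbed equation collapses to $I'(u_g)=0$, and $I(u_g)=c_{\mathcal{K}}<0=I(0)$ forces $u_g\neq 0$, so $u_g\in\mathcal{K}$ attains the ground state level. I expect the most delicate point to be the algebraic balance in this energy estimate: extracting a clean lower bound for $I(u_g)$ so that the $-\tfrac{bl}{6}\|u_g\|^2$ generated by the perturbed Nehari identity is absorbed exactly by the cross term $\tfrac{b}{3}l\|u_g\|^2$ produced by the Br\'ezis-Lieb expansion of the Kirchhoff nonlinearity.
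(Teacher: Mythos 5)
Your argument is correct, and it reaches the same contradiction threshold $\tfrac13 C_{b,S}+\tfrac{b}{12}C_{b,S}^2+\tfrac{\sqrt e}{3}\mu e^{-\lambda/\mu}|\Omega|\ge 0$ as the paper, but by a genuinely different route. The paper localizes: it invokes the concentration--compactness principle (Lemma \ref{CCP}), tests $I'(u_n)=0$ against $u_n\phi$ with cut-offs $\phi$ shrinking around a putative concentration point $x_k$, derives $(1+b\mu_k)\mu_k\le\nu_k$ together with $\mu_k\ge S\nu_k^{1/3}$ to force $\mu_k\ge C_{b,S}$ for each atom, and then feeds this into $I(u_n)-\tfrac16\langle I'(u_n),u_n\rangle$ to contradict $c_{\mathcal K}<0$; once $\mathcal J=\emptyset$ it recovers strong convergence from $\langle I'(u_n),u_n-u_g\rangle=0$. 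You instead work globally with the Br\'ezis--Lieb splitting $w_n=u_n-u_g$: the key observation that the weak limit solves only the perturbed equation with coefficient $1+b\|u_g\|^2+bl$, hence $\langle I'(u_g),u_g\rangle=-bl\|u_g\|^2$, lets you compute $\lim\|w_n\|_6^6=l+bl^2+bl\|u_g\|^2$ exactly, and the global Sobolev inequality $\|w_n\|_6^6\le S^{-3}\|w_n\|^6$ then forces $l\ge C_{b,S}$ whenever $l>0$. Your bookkeeping is right: the cross term $\tfrac{b}{3}l\|u_g\|^2$ from the energy expansion does absorb the $-\tfrac{bl}{6}\|u_g\|^2$ coming from the perturbed Nehari identity, so all $u_g$-dependent terms can be dropped and the same numerical threshold emerges. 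What your approach buys is the complete avoidance of the measure-theoretic machinery and of the cut-off estimates \eqref{3-11-equation2}--\eqref{3-11-equation4}; what the paper's approach buys is that the same localization template is reused verbatim for the truncated functionals in Lemmas \ref{3-PS-condition} and \ref{4-PS-condition}, whereas your splitting would have to be redone there because $\psi(\|u_n\|^2)$ enters the Kirchhoff coefficient. Both proofs are complete and correct.
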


\begin{proof}
Let $\{u_n\}\subset\mathcal{K}$ be a minimizing sequence for $c_{\mathcal{K}}$. We first show that $\{u_n\}$ is bounded.
Indeed, from $I(u_n)\rightarrow c_\mathcal{K}$ as $n\rightarrow \infty$ and \eqref{ineq-3-1},
one obtains, for $n$ suitably large, that
\begin{align*}
c_\mathcal{K}+1&\geq I(u_n)-\frac{1}{6}\langle I'(u_n),u_n\rangle\\
&=\frac{1}{3}\|u_n\|^{2}+\frac{b}{12}\|u_n\|^{4}-\frac{\lambda}{3}\|u_n\|_{2}^{2} +\frac{\mu}{2}\|u_n\|_{2}^{2}
-\frac{\mu}{3}\int_{\Omega}u_n^{2}\log u_n^{2}\mathrm{d}x\\
&\geq\frac{1}{3}\|u_n\|^{2}+\frac{b}{12}\|u_n\|^{4}+\frac{\mu}{3}e^{\frac{1}{2}-\frac{\lambda}{\mu}}|\Omega|,
\end{align*}
which ensures the boundedness of $\{u_n\}$. Consequently, there exist a subsequence of $\{u_n\}$ (still
denoted by $\{u_n\}$) and a $u_g\in H_0^1(\Omega)$ such that, as $n\rightarrow\infty$,
\begin{equation}\label{3-1-un}
\begin{cases}
u_n&\rightharpoonup \ u_g \ in \ H_0^1(\Omega),\\
u_n&\rightarrow \ u_g \ in \ L^p(\Omega), \ \ \  1\leq p<6,\\
u_n^5&\rightharpoonup u_g^5 \ in \ L^{\frac{6}{5}}(\Omega),\\
u_n&\rightarrow \ u_g \ a.e. \ in \ \Omega.
\end{cases}
\end{equation}
According to Lemma \ref{CCP}, there exist an at most countable index set $\mathcal{J}$ and a collection of points
$\{x_k\}_{k\in \mathcal{J}}$ such that
\allowdisplaybreaks \begin{align*}
&|\nabla u_n|^2\rightharpoonup\mu\geq |\nabla u_g|^2+\sum_{k\in \mathcal{J}}\mu_k \delta_{x_k}, \ \ \mu_k>0,\nonumber\\
&|u_n|^{6}\rightharpoonup \nu=|u_g|^{6}+\sum_{k\in \mathcal{J}}\nu_k \delta_{x_k}, \ \ \nu_k>0,
\end{align*}
in the measure sense.
In addition, we also have
\begin{equation}\label{3-1-uv}
\mu_k \geq S\nu_k^{\frac{1}{3}}, \ \ \ \ k\in \mathcal{J}.
\end{equation}

Now we claim that $\mathcal{J}=\emptyset$. Suppose on the contrary that $\mathcal{J}\neq\emptyset$ and fix $k\in \mathcal{J}$.
Define a function $\phi\in C_0^\infty(\Omega)$ with $\phi(x)=1$ in $B(x_k,\rho)$, $\phi(x)=0$ in
$\Omega\backslash B(x_k,2\rho)$, $0\leq \phi(x)\leq1$ otherwise, and with $|\nabla\phi|\leq\frac{2}{\rho}$ in $\Omega$,
where $\rho >0$ is a constant.
Noticing that $I'(u_n)=0$  and $\{u_n\phi\}$ is bounded in $H_0^1(\Omega)$, we have
\begin{align}\label{3-11-eq1}
0=&\lim\limits_{n\rightarrow\infty}\langle I'(u_n),u_n\phi\rangle\nonumber\\
=&\lim\limits_{n\rightarrow\infty}\left\{\left(1+b\int_{\Omega}|\nabla u_n|^2\mathrm{d}x\right)\int_{\Omega}\nabla {u_n}\nabla (u_n \phi)\mathrm{d}x-\lambda\int_{\Omega}u_n^2 \phi\mathrm{d}x-\mu\int_{\Omega}u_n^2 \phi\log u_n^2\mathrm{d}x\right\}\nonumber\\
&-\lim\limits_{n\rightarrow\infty}\int_{\Omega} u_n^6 \phi\mathrm{d}x\nonumber\\
=&\lim\limits_{n\rightarrow\infty}\left\{\left(1+b\int_{\Omega}|\nabla u_n|^2\mathrm{d}x\right)\int_{\Omega}|\nabla {u_n}|^2\phi\mathrm{d}x
+\left(1+b\int_{\Omega}|\nabla u_n|^2\mathrm{d}x\right)\int_{\Omega}\nabla {u_n}u_n\nabla \phi\mathrm{d}x\right\}\nonumber\\
&-\lim\limits_{n\rightarrow\infty}\left\{\lambda\int_{\Omega}u_n^2 \phi\mathrm{d}x+\mu\int_{\Omega}u_n^2 \phi\log u_n^2\mathrm{d}x
+\int_{\Omega} u_n^6 \phi\mathrm{d}x\right\}\nonumber\\
\geq&\lim\limits_{n\rightarrow\infty}\left\{\left(1+b\int_{\Omega}|\nabla u_n|^2\phi\mathrm{d}x\right)
\int_{\Omega}|\nabla {u_n}|^2\phi\mathrm{d}x
-\int_{\Omega} u_n^6 \phi\mathrm{d}x\right\}+o(1),
\end{align}
where $o(1)\rightarrow0$ as $\rho\rightarrow0$. The last inequality follows from the fact,  as $\rho\rightarrow0$,
\begin{align}\label{3-11-equation1}
\lim\limits_{n\rightarrow\infty}\left\{\left(1+b\int_{\Omega}|\nabla u_n|^2\mathrm{d}x\right)\int_{\Omega}\nabla {u_n}u_n\nabla \phi\mathrm{d}x
-\lambda\int_{\Omega}u_n^2 \phi\mathrm{d}x-\mu\int_{\Omega}u_n^2 \phi\log u_n^2\mathrm{d}x\right\}=o(1).
\end{align}
Next, we prove \eqref{3-11-equation1}. Since $\{u_n\}$ is bounded in $H_0^1(\Omega)$, by means of \eqref{3-1-un},
H\"{o}lder's inequality and the definition of $\phi$, one has
\begin{eqnarray}\label{3-11-equation2}
&&\limsup\limits_{n\rightarrow\infty}\bigg|\left(1+b\int_{\Omega}|\nabla u_n|^2\mathrm{d}x\right)\int_{\Omega}\nabla {u_n}u_n\nabla \phi\mathrm{d}x\bigg|\nonumber\\
&\leq&\limsup\limits_{n\rightarrow\infty}C\int_{\Omega\cap B(x_k,2\rho)}|\nabla u_n u_n\nabla\phi|\mathrm{d}x\nonumber\\
&\leq&\limsup\limits_{n\rightarrow\infty}C\left(\int_{\Omega\cap B(x_k,2\rho)}|\nabla u_n|^2\mathrm{d}x\right)^{\frac{1}{2}}
\left(\int_{\Omega\cap B(x_k,2\rho)}|u_n \nabla\phi|^2\mathrm{d}x\right)^{\frac{1}{2}}\nonumber\\
&\leq&\limsup\limits_{n\rightarrow\infty}C\left(\int_{\Omega\cap B(x_k,2\rho)}|u_n \nabla\phi|^2\mathrm{d}x\right)^{\frac{1}{2}}\nonumber\\
&=&C\left(\int_{\Omega\cap B(x_k,2\rho)}|u_g \nabla\phi|^2\mathrm{d}x\right)^{\frac{1}{2}}\nonumber\\
&\leq&C\left(\int_{\Omega\cap B(x_k,2\rho)}|u_g|^6\mathrm{d}x\right)^{\frac{1}{6}}
\left(\int_{\Omega\cap B(x_k,2\rho)}|\nabla\phi|^3\mathrm{d}x\right)^{\frac{1}{3}}\nonumber\\
&\leq&C\left(\int_{\Omega\cap B(x_k,2\rho)}|u_g|^6\mathrm{d}x\right)^{\frac{1}{6}} \ \ \ \rightarrow  \ 0, \ \ as \ \rho\rightarrow 0.
\end{eqnarray}
Using \eqref{3-1-un} and the definition of $\phi$ again, we deduce that
\begin{eqnarray}\label{3-11-equation3}
&&\limsup\limits_{n\rightarrow\infty}\bigg|\int_{\Omega}u_n^2 \phi\mathrm{d}x\bigg|\nonumber\\
&=&\int_{\Omega\cap B(x_k,2\rho)}u_g^2 \phi\mathrm{d}x\nonumber\\
&\leq&\int_{\Omega\cap B(x_k,2\rho)}u_g^2 \mathrm{d}x \ \ \ \rightarrow  \ 0, \ \ as \ \rho\rightarrow 0.
\end{eqnarray}
Recalling \eqref{2-log2} with $\delta=\sigma=\frac{1}{2}$, \eqref{2-log-convergence1} and the definition of $\phi$,
one obtains
\begin{eqnarray}\label{3-11-equation4}
&&\limsup\limits_{n\rightarrow\infty}\bigg|\int_{\Omega}u_n^2 \phi\log u_n^2\mathrm{d}x\bigg|\nonumber\\
&=&\bigg|\int_{\Omega\cap B(x_k,2\rho)}u_g^2 \phi\log u_g^2\mathrm{d}x\bigg|\nonumber\\
&\leq&C\int_{\Omega\cap B(x_k,2\rho)}(u_g^3+u_g) \mathrm{d}x \ \ \ \rightarrow  \ 0, \ \ as \ \rho\rightarrow 0.
\end{eqnarray}
It follows from \eqref{3-11-equation2}, \eqref{3-11-equation3} and \eqref{3-11-equation4}  that \eqref{3-11-equation1} holds.

Letting  $\rho\rightarrow 0$ in \eqref{3-11-eq1}, we have
$$0\geq(1+b\mu_k)\mu_k-\nu_k.$$
Combing this with \eqref{3-1-uv}, we have
\begin{align}\label{3-11-Ck}
\mu_k\geq C_{b,S}:=\frac{1}{2}\left(bS^3+\sqrt{b^2S^6+4S^3}\right).
\end{align}
Since $I(u_n)\rightarrow c_{\mathcal{K}}$ as $n\rightarrow\infty$ and $I'(u_n)=0$,  by using the boundedness of $\{u_n\}$ and
\eqref{ineq-3-1}, one has
\begin{align}\label{3-11-ineq2}
c_{\mathcal{K}}=&\lim\limits_{n\rightarrow\infty}\left\{I(u_n)-\frac{1}{6}\langle I'(u_n),u_n\rangle\right\}\nonumber\\
=&\lim\limits_{n\rightarrow\infty}\bigg\{\frac{1}{3}\|u_n\|^2+\frac{b}{12}\|u_n\|^4-\frac{\lambda}{3}\|u_n\|_2^2+\frac{\mu}{2}\|u_n\|_2^2
-\frac{\mu}{3}\int_{\Omega}u_n^2\log{u_n^2}\mathrm{d}x\bigg\}\nonumber\\
\geq&\lim\limits_{n\rightarrow\infty}\left\{\frac{1}{3}\int_{\Omega}|\nabla u_n|^2\phi\mathrm{d}x
+\frac{b}{12}\left(\int_{\Omega}|\nabla u_n|^2\phi\mathrm{d}x\right)^2+\frac{\mu}{3}e^{\frac{1}{2}-\frac{\lambda}{\mu}}|\Omega|\right\}.
\end{align}
Letting $\rho\rightarrow0$ in \eqref{3-11-ineq2}, since $(b, \lambda,\mu)\in \mathcal{A}_1$, by \eqref{3-11-Ck} we have
\begin{align*}
c_{\mathcal{K}}\geq&\frac{1}{3}\mu_k+\frac{b}{12}\mu_k^2+\frac{\mu}{3}e^{\frac{1}{2}-\frac{\lambda}{\mu}}|\Omega|\\
\geq&\frac{1}{3}C_{b,S}+\frac{b}{12}C_{b,S}^2+\frac{\mu}{3}e^{\frac{1}{2}-\frac{\lambda}{\mu}}|\Omega|\\
\geq&0.
\end{align*}
which contradicts with $c_{\mathcal{K}}<0$. Consequently, $\mathcal{J}=\emptyset$ and
\begin{equation}\label{3-11-equa1}
\int_{\Omega}|u_n|^{6}\mathrm{d}x\rightarrow\int_{\Omega}|u_g|^{6}\mathrm{d}x, \ \ as \ \ n\rightarrow \infty.
\end{equation}

Finally, by combining $I'(u_n)=0$  with \eqref{3-1-un} and \eqref{3-11-equa1} and
applying Lemma \ref{2-convergence}, we have, for $n$ large enough,
\begin{align}\label{3-11-un-u}
0=&\langle I'(u_n),u_n-u_g\rangle\nonumber\\
=&(1+b\|u_n\|^2)\int_{\Omega}\nabla u_n\nabla(u_n-u_g)\mathrm{d}x-\lambda\int_{\Omega}u_n(u_n-u_g)\mathrm{d}x
-\mu\int_{\Omega}u_n(u_n-u_g)\log u_n^2\mathrm{d}x\nonumber\\
&-\int_{\Omega} u_n^5(u_n-u_g)\mathrm{d}x\nonumber\\
=&(1+b\|u_n\|^2)\int_{\Omega}\nabla u_n\nabla(u_n-u_g)\mathrm{d}x+o_n(1).
\end{align}
From \eqref{3-11-un-u} we have
$$\int_{\Omega}|\nabla u_n|^2\mathrm{d}x-\int_{\Omega}|\nabla u_g|^2\mathrm{d}x=o(1), \ \ as \ n\rightarrow\infty,$$
which, together with $u_n\rightharpoonup u_g \ in \ H_0^1(\Omega)$, implies that $u_n\rightarrow u_g \ in \ H_0^1(\Omega)$
as $n\rightarrow\infty$. This completes the proof.
\end{proof}

{\bf The Proof of Theorems \ref{th1.1}, \ref{th1.2} and Remark \ref{remark1.1}:}
Assume that $N=3$ and $(b, \lambda,\mu)\in \mathcal{A}_0$. By Lemmas \ref{le3.2} and \ref{le3.3} and
the fact that $I(u)$ is a $C^1$ functional in $H_0^1(\Omega)$, we know that $\widetilde{u}$ is a local minimum
solution to problem \eqref{eq1} with $I(\widetilde{u})=c_{\rho}<0$. Similarly, when $N=3$ and $(b, \lambda,\mu)\in \mathcal{A}_1$,
from Lemmas \ref{le3.2} and \ref{le3.4} and the fact that $I(u)$ is a $C^1$ functional in $H_0^1(\Omega)$,
we can deduce that $u_g$ is a ground state solution to
problem \eqref{eq1} with $I(u_g)=c_{\mathcal{K}}<0$. Theorems \ref{th1.1} and \ref{th1.2} are proved.

To prove Remark \ref{remark1.1}, it suffices to show that $c_{\rho}=c_{\mathcal{K}}$ when both the assumptions
in Theorems \ref{th1.1}-\ref{th1.2} and \eqref{local-ground} hold. Obviously, $c_{\rho}\geq c_{\mathcal{K}}$. It remains
to prove the reversed inequality. Since $u_g$ is a ground state solution to problem \eqref{eq1} with
$I(u_g)=c_{\mathcal{K}}<0$, one obtains
\begin{align*}
0>c_\mathcal{K}=I(u_g)-\frac{1}{6}\langle I'(u_g),u_g\rangle
\geq\frac{1}{3}\|u_g\|^{2}+\frac{b}{12}\|u_g\|^{4}+\frac{\mu}{3}e^{\frac{1}{2}-\frac{\lambda}{\mu}}|\Omega|,
\end{align*}
which implies
\begin{equation}\label{local-ground-2}
\|u_g\|^2< \frac{-2+2\sqrt{1+b|\mu|e^{\frac{1}{2}-\frac{\lambda}{\mu}}|\Omega|}}{b}\leq \rho^2.
\end{equation}
Here we have used \eqref{local-ground} and the choice of $\rho$ to obtain the second inequality in \eqref{local-ground-2}.
Then the fact $c_{\rho}\leq c_{\mathcal{K}}$ follows directly from \eqref{local-ground-2}. The proof of
Remark \ref{remark1.1} is complete.

\subsection{Proof of Theorem \ref{th1.3}}

In this subsection, we prove Theorem \ref{th1.3}. Since $2^*=6>4$ when $N=3$, the energy functional $I(u)$
is not bounded from below in $H_0^1(\Omega)$ and Lemma \ref{symmetric mountain-pass lemma} can not be applied directly.
To avoid this difficulty, we introduce an appropriate truncation on the critical term. Then by using Lemma
\ref{symmetric mountain-pass lemma} and the Concentration Compactness principle, we prove that problem \eqref{eq1} possesses
a sequence of solutions $\{u_k\}$ whose energies and $H_0^1(\Omega)$-norms converge to $0$ as $k\rightarrow\infty$.

To begin with, for each $u\in H_0^1(\Omega)$, one gets, by using \eqref{S} and \eqref{log-3-1}, that
\begin{align*}
I(u)&=\frac{1}{2}\|u\|^2+\frac{b}{4}\|u\|^4-\frac{\lambda}{2}\|u\|_2^2+\frac{\mu}{2}\|u\|_2^2
-\frac{\mu}{2}\int_{\Omega}u^2\log{u^2}\mathrm{d}x-\frac{1}{6}\|u\|_{6}^{6}\nonumber\\
&\geq \frac{1}{2}\|u\|^2+\frac{\mu}{2}e^{-\frac{\lambda}{\mu}}|\Omega|-\frac{1}{6S^3}\|u\|^{6}.
\end{align*}
Set
\begin{equation}\label{G}
G(t)=\frac{1}{2}t^2+\frac{\mu}{2}e^{-\frac{\lambda}{\mu}}|\Omega|-\frac{1}{6S^{3}}t^{6}, \ \ \ t\geq0.
\end{equation}
It is directly checked that $G'(t)>0$ for $0<t<t_{max}$ and $G'(t)<0$ for $t>t_{max}$, where $t_{max}:=S^{\frac{3}{4}}$.
Therefore, $G(t)$ attains its maximum at $t_{max}$ and
\begin{eqnarray*}
G_{max}:=G(t_{max})=\frac{1}{3}S^{\frac{3}{2}}+\frac{\mu}{2}e^{-\frac{\lambda}{\mu}}|\Omega|.
\end{eqnarray*}
Since $(b,\lambda,\mu)\in \mathcal{A}_2$, $G_{max}>0$.
Set $G_0=\frac{1}{2}G_{max}=\frac{1}{6}S^{\frac{3}{2}}+\frac{\mu}{4}e^{-\frac{\lambda}{\mu}}|\Omega|$,
then $G_0>0$ and there exists a unique $t_0\in(0,t_{max})$ such that $G(t_0)=G_0$.

Define a truncation function $\psi(x)\in C^1[0,\infty)$ such that
\begin{equation*}
\psi(x)=\begin{cases}
1, \ \ \ \ \  &0\leq x \leq t_0^2,\\
0\leq\psi(x)\leq1,&t_0^2\leq x \leq t_{max}^2,\\
3S^3\dfrac{1}{x^2}+\left(\frac{\mu}{2}e^{-\frac{\lambda}{\mu}}|\Omega|-G_{max}\right)6S^3\dfrac{1}{x^3}, &x\in [t_{max}^2,+\infty).
\end{cases}
\end{equation*}
Then we consider the following truncated functional defined in $H_0^1(\Omega)$
$$I_{\psi}(u)=\frac{1}{2}\|u\|^2+\frac{b}{4}\|u\|^4-\frac{\lambda}{2}\|u\|_2^2+\frac{\mu}{2}\|u\|_2^2
-\frac{\mu}{2}\int_{\Omega}u^2\log{u^2}\mathrm{d}x-\frac{1}{6}\psi(\|u\|^2)\|u\|_{6}^{6}.$$
We can easily verify that $I_{\psi}(u)$ is well defined and is a $C^1$ functional in $H_0^1(\Omega)$. Its first Fr\'{e}chet
derivative is denoted by
\begin{align*}
\langle I_{\psi}'(u),\phi\rangle=&(1+b\|u\|^2)\int_{\Omega}\nabla u\nabla \phi\mathrm{d}x-\lambda\int_{\Omega}u\phi\mathrm{d}x
-\mu\int_{\Omega}u\phi\log u^2\mathrm{d}x\nonumber\\
&-\frac{1}{3}\psi'(\|u\|^2)\|u\|_{6}^{6}\int_{\Omega}\nabla u\nabla \phi\mathrm{d}x-\psi(\|u\|^2)\int_{\Omega} u^5\phi\mathrm{d}x,
\ \ \ \ \forall\,u,\phi\in H_0^1(\Omega).
\end{align*}

The following lemma shows that the truncated functional $I_{\psi}(u)$ is bounded from below in $H_0^1(\Omega)$, as desired.

\begin{lemma}\label{3-bounded-from-below}
Assume that $b>0$, $\lambda\in \mathbb{R}$ and $\mu<0$. Then $I_{\psi}(u)$ is bounded from below in $H_0^1(\Omega)$.
\end{lemma}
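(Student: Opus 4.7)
The plan is to reduce the question to a one-dimensional analysis in the variable $t = \|u\|$. First I would apply inequality \eqref{log-3-1} (with the $\mu<0$ hypothesis) to replace the combined linear, $L^2$ and logarithmic contributions by the constant lower bound $\frac{\mu}{2}e^{-\lambda/\mu}|\Omega|$. Next I would invoke the Sobolev inequality \eqref{S}, namely $\|u\|_6^6 \leq S^{-3}\|u\|^6$, together with the non-negativity of $\psi$, to bound the critical contribution from above by $\frac{1}{6S^3}\psi(\|u\|^2)\|u\|^6$. This yields
\begin{equation*}
I_\psi(u) \;\geq\; h(\|u\|), \qquad h(t):=\frac{1}{2}t^2+\frac{b}{4}t^4+\frac{\mu}{2}e^{-\lambda/\mu}|\Omega|-\frac{1}{6S^3}\psi(t^2)\,t^6.
\end{equation*}

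Next I would split the analysis into the three regimes dictated by the definition of $\psi$. For $0\le t\le t_{\max}$ the function $h$ is continuous on a bounded interval, so it is trivially bounded below (the dangerous critical term is at worst $\frac{1}{6S^3}t^6$, still controlled since $t$ is bounded). The key step is the regime $t^2\ge t_{\max}^2 = S^{3/2}$, where by construction
\begin{equation*}
\psi(t^2)\,t^6 \;=\; 3S^3\,t^2 + 6S^3\Bigl(\tfrac{\mu}{2}e^{-\lambda/\mu}|\Omega|-G_{\max}\Bigr).
\end{equation*}
Substituting this into $h(t)$ produces an exact cancellation of the quadratic term and of the constant $\frac{\mu}{2}e^{-\lambda/\mu}|\Omega|$, leaving
\begin{equation*}
h(t) \;=\; \frac{b}{4}t^4 + G_{\max}, \qquad t^2\ge t_{\max}^2,
\end{equation*}
which is not merely bounded below but in fact coercive on this regime.

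Combining the three cases, $h$ is bounded below on $[0,\infty)$, hence so is $I_\psi$ on $H_0^1(\Omega)$. There is no real obstacle here, since the truncation $\psi$ has been tailored precisely so that the critical term, when multiplied by $\psi(t^2)$, is dominated at infinity and the residual part collapses to $\frac{b}{4}t^4 + G_{\max}$; the only point requiring care is the algebraic verification in the third regime and the observation that continuity handles the bounded intermediate interval.
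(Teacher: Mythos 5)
Your proposal is correct and follows essentially the same route as the paper: reduce to the one-variable function $h(\|u\|)$ via \eqref{log-3-1} and \eqref{S}, then use the explicit form of $\psi$ on $[t_{max}^2,\infty)$ to see that the truncated critical term exactly cancels the quadratic and constant parts there. The only cosmetic differences are that the paper discards the $\frac{b}{4}t^4$ term before defining its comparison function $\widetilde{G}$ and obtains the explicit bound $\widetilde{G}(t)\geq\widetilde{G}(0)=\frac{\mu}{2}e^{-\lambda/\mu}|\Omega|$ by comparing $\widetilde{G}$ with $G$ on the three regimes, whereas you keep that term and invoke continuity on the compact interval; both are valid.
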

\begin{proof}
For any $u \in H_0^1(\Omega)\backslash\{0\}$, since $b>0$  and $\mu<0$, it follows from \eqref{S} and \eqref{log-3-1} that
\begin{align}\label{3-2-ineq1}
I_{\psi}(u)=&\frac{1}{2}\|u\|^2+\frac{b}{4}\|u\|^4-\frac{\lambda}{2}\|u\|_2^2+\frac{\mu}{2}\|u\|_2^2
-\frac{\mu}{2}\int_{\Omega}u^2\log{u^2}\mathrm{d}x-\frac{1}{6}\psi(\|u\|^2)\|u\|_{6}^{6}\nonumber\\
\geq&\frac{1}{2}\|u\|^2+\frac{\mu}{2}e^{-\frac{\lambda}{\mu}}|\Omega|-\frac{1}{6S^3}\psi(\|u\|^2)\|u\|^{6}.
\end{align}
Set
\begin{equation}\label{G-t}
\widetilde{G}(t)=\frac{1}{2}t^2+\frac{\mu}{2}e^{-\frac{\lambda}{\mu}}|\Omega|-\frac{1}{6S^{3}}\psi(t^2)t^{6}, \ \ \ t\geq0.
\end{equation}
According to the definition of $\psi$, we can see that $\widetilde{G}(t)=G(t)$ for $0\leq t \leq t_0$,
$\widetilde{G}(t)\geq G(t)$ for $t_0\leq t \leq t_{max}$ and $\widetilde{G}(t)=G_{max}$ for $t\geq t_{max}$.
Here $G(t)$ is the function defined in \eqref{G}.
Therefore, $I_{\psi}(u)\geq\widetilde{G}(\|u\|)\geq\widetilde{G}(0)=\frac{\mu}{2}e^{-\frac{\lambda}{\mu}}|\Omega|$.
This completes the proof.
\end{proof}

Next, we prove that $I_{\psi}$ satisfies the local $(PS)$ condition (known as $(PS)_c$ condition). A key point
in this process is the restriction of the critical level $c$.

\begin{lemma}\label{3-PS-condition}
Assume that $(b,\lambda,\mu)\in \mathcal{A}_2$. If
$$c<c^*=\min\left\{\frac{1}{6}S^{\frac{3}{2}}+\frac{\mu}{4}e^{-\frac{\lambda}{\mu}}|\Omega|,
\frac{1}{3}C_{b,S}+\frac{b}{12}C_{b,S}^2+\frac{\mu}{3}e^{\frac{1}{2}-\frac{\lambda}{\mu}}|\Omega|\right\},$$
where $C_{b,S}=\frac{1}{2}\left(bS^3+\sqrt{b^2S^6+4S^3}\right)$, then $I_{\psi}$ satisfies the $(PS)_c$ condition,
i.e., any sequence $\{u_n\}$ satisfying $I_{\psi}(u_n)\rightarrow c$ and $I_{\psi}'(u_n)\rightarrow 0$ as $n\rightarrow\infty$
has a strongly convergent subsequence.
\end{lemma}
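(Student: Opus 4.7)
I plan to handle the two components of $c^*$ with two complementary ideas: the first threshold $G_0=\frac{1}{6}S^{\frac{3}{2}}+\frac{\mu}{4}e^{-\frac{\lambda}{\mu}}|\Omega|$ will be used to push any $(PS)_c$ sequence into the ball $\{\|u\|<t_0\}$, where the truncation factor $\psi$ is identically $1$ and hence $I_\psi$ coincides with the original functional $I$; the second threshold $\frac{1}{3}C_{b,S}+\frac{b}{12}C_{b,S}^2+\frac{\mu}{3}e^{\frac{1}{2}-\frac{\lambda}{\mu}}|\Omega|$ will rule out mass concentration through a cut-off/CCP argument that parallels Lemma \ref{le3.4}.

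\textbf{Boundedness and deactivation of the truncation.} Substituting the tail formula for $\psi$ and using \eqref{log-3-1}, one obtains $I_\psi(u)\geq \tfrac{b}{4}\|u\|^4+G_{max}$ whenever $\|u\|\geq t_{max}$, so $I_\psi$ is coercive and any $(PS)_c$ sequence $\{u_n\}$ is bounded in $H_0^1(\Omega)$. Passing to a subsequence, $u_n\rightharpoonup u$ in $H_0^1(\Omega)$, $u_n\to u$ in $L^p(\Omega)$ for $1\leq p<6$, and a.e. in $\Omega$. Inspecting $\widetilde{G}$ in \eqref{G-t} one checks $\widetilde{G}(t)\geq G_0$ for every $t\geq t_0$ (on $[t_0,t_{max}]$ because $\widetilde{G}\geq G$ and $G$ is increasing there, and on $[t_{max},\infty)$ because $\widetilde{G}\equiv G_{max}\geq G_0$). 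Since $I_\psi(u)\geq \widetilde{G}(\|u\|)$, this forces $I_\psi(u_n)\geq G_0$ whenever $\|u_n\|\geq t_0$. Because $c<G_0$ by hypothesis, we must have $\|u_n\|<t_0$ for all $n$ large enough, and on this range $\psi(\|u_n\|^2)=1$ and $\psi'(\|u_n\|^2)=0$. Hence $I_\psi(u_n)=I(u_n)$ and $I_\psi'(u_n)=I'(u_n)$, so $\{u_n\}$ is a $(PS)_c$ sequence for the untruncated functional $I$.

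\textbf{Concentration-compactness and strong convergence.} Now I would apply Lemma \ref{CCP} to obtain bounded measures $\mu,\nu$ and at most countably many atoms $\{x_k\}_{k\in\mathcal{J}}$ with $\mu_k\geq S\nu_k^{1/3}$. Choosing a standard cut-off $\phi$ supported in $B(x_k,2\rho)$ with $\phi\equiv 1$ on $B(x_k,\rho)$ and testing $\langle I'(u_n),u_n\phi\rangle\to 0$, one repeats verbatim the estimates \eqref{3-11-equation2}, \eqref{3-11-equation3}, \eqref{3-11-equation4} from Lemma \ref{le3.4} (using H\"older, \eqref{2-log2}, \eqref{2-log-convergence1}, and the strong $L^p$-convergence for $p<6$) to discard, as $\rho\to 0$, all terms involving $\nabla u_n\cdot\nabla\phi$, the mass term and the logarithmic term, leading to $(1+b\mu_k)\mu_k\leq \nu_k$ and thereby $\mu_k\geq C_{b,S}$. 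Using the identity $c=\lim_n\bigl(I(u_n)-\tfrac{1}{6}\langle I'(u_n),u_n\rangle\bigr)$ together with \eqref{ineq-3-1} then yields $c\geq \tfrac{1}{3}\mu_k+\tfrac{b}{12}\mu_k^2+\tfrac{\mu}{3}e^{\frac{1}{2}-\frac{\lambda}{\mu}}|\Omega|\geq \tfrac{1}{3}C_{b,S}+\tfrac{b}{12}C_{b,S}^2+\tfrac{\mu}{3}e^{\frac{1}{2}-\frac{\lambda}{\mu}}|\Omega|$, contradicting the second part of $c<c^*$. Hence $\mathcal{J}=\emptyset$, so $\|u_n\|_6\to \|u\|_6$. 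Finally, testing $\langle I'(u_n),u_n-u\rangle\to 0$ and using Lemma \ref{2-convergence} together with the $L^6$-convergence to kill $\int_\Omega u_n^5(u_n-u)\,\mathrm{d}x$, we deduce $\|u_n\|\to\|u\|$, which combined with weak convergence gives $u_n\to u$ strongly in $H_0^1(\Omega)$.

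\textbf{Main obstacle.} The subtle point is that the two components of $c^*$ are honestly needed for two distinct reasons. The threshold $G_0$ is what allows us to \emph{switch off the truncation}: without it, $\|u_n\|$ could drift into the transition region $[t_0,t_{max}]$, where the extra $\tfrac{1}{18}\psi'(\|u_n\|^2)\|u_n\|^2\|u_n\|_6^6$ contribution in the Nehari-type identity $I_\psi-\tfrac{1}{6}\langle I_\psi',\,\cdot\,\rangle$ would spoil the clean lower bound used to contradict $c<c^*$. Verifying the monotonicity property $\widetilde{G}\geq G_0$ on $[t_0,\infty)$ directly from the piecewise definition of $\psi$ is therefore the technical hinge that makes the two parts of $c^*$ fit together into a coherent $(PS)_c$ statement.
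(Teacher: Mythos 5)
Your proposal is correct and follows essentially the same route as the paper: use $c<G_0$ together with the lower bound $I_\psi(u)\geq\widetilde G(\|u\|)$ and the fact that $\widetilde G\geq G_0$ on $[t_0,\infty)$ to force $\|u_n\|<t_0$ (hence boundedness and $\psi(\|u_n\|^2)=1$, $\psi'(\|u_n\|^2)=0$), then run the concentration–compactness/cut-off argument of Lemma \ref{le3.4} to get $\mu_k\geq C_{b,S}$ and contradict the second component of $c^*$ via $I_\psi-\tfrac16\langle I_\psi',\cdot\rangle$, and finally conclude strong convergence by testing with $u_n-u$. No gaps; the only cosmetic difference is that you phrase the boundedness step via coercivity of $I_\psi$, which the paper obtains implicitly from the same estimate.
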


\begin{proof}
Since $(b,\lambda,\mu)\in \mathcal{A}_2$, we know $c^*>0$. Assume that $\{u_n\}\subset H_0^1(\Omega)$ is a $(PS)_c$
sequence of $I_{\psi}$, i.e., $I_{\psi}(u_n)\rightarrow c$ and $I_{\psi}'(u_n)\rightarrow 0$ as $n\rightarrow\infty$.
Since $c<\frac{1}{6}S^{\frac{3}{2}}+\frac{\mu}{4}e^{-\frac{\lambda}{\mu}}|\Omega|=G_0$, we have $I_{\psi}(u_n)< G_0$
for $n$ large enough. Combining this with \eqref{3-2-ineq1} and the definition of $\psi(x)$, we can deduce that
$\{u_n\}$ is bounded in $H_0^1(\Omega)$ and $\psi(\|u_n\|^2)=1$ and $\psi'(\|u_n\|^2)=0$ for $n$ large enough.
Consequently, there exist a subsequence of $\{u_n\}$ (still
denoted by $\{u_n\}$) and a $u\in H_0^1(\Omega)$ such that, as $n\rightarrow\infty$,
\begin{equation}\label{3-2-un}
\begin{cases}
u_n&\rightharpoonup \ u \ in \ H_0^1(\Omega),\\
u_n&\rightarrow \ u \ in \ L^p(\Omega), \ \ \  1\leq p<6,\\
u_n^5&\rightharpoonup u^5 \ in \ L^{\frac{6}{5}}(\Omega),\\
u_n&\rightarrow \ u \ a.e. \ in \ \Omega.
\end{cases}
\end{equation}
According to Lemma \ref{CCP}, there exist an at most countable index set $\mathcal{J}$ and a collection of points
$\{x_k\}_{k\in \mathcal{J}}$ such that
\allowdisplaybreaks \begin{align*}
&|\nabla u_n|^2\rightharpoonup\mu\geq |\nabla u|^2+\sum_{k\in \mathcal{J}}\mu_k \delta_{x_k}, \ \ \mu_k>0,\nonumber\\
&|u_n|^{6}\rightharpoonup \nu=|u|^{6}+\sum_{k\in \mathcal{J}}\nu_k \delta_{x_k}, \ \ \nu_k>0,
\end{align*}
in the measure sense.
In addition, we also have
\begin{equation}\label{uv}
\mu_k \geq S\nu_k^{\frac{1}{3}}, \ \ \ \ k\in \mathcal{J}.
\end{equation}

Now we claim that $\mathcal{J}=\emptyset$. Suppose on the contrary that $\mathcal{J}\neq\emptyset$ and fix $k\in \mathcal{J}$.
Define a function $\phi\in C_0^\infty(\Omega)$ with $\phi(x)=1$ in $B(x_k,\rho)$, $\phi(x)=0$ in
$\Omega\backslash B(x_k,2\rho)$, $0\leq \phi(x)\leq1$ otherwise, and with $|\nabla\phi|\leq\frac{2}{\rho}$ in $\Omega$,
where $\rho >0$ is a constant.
Since $I_{\psi}'(u_n)\rightarrow0$ in $H^{-1}(\Omega)$ as $n\rightarrow\infty$ and $\{u_n\phi\}$ is bounded in $H_0^1(\Omega)$,
recalling the fact that $\psi(\|u_n\|^2)=1$ and $\psi'(\|u_n\|^2)=0$ for $n$ large enough
and using \eqref{3-11-equation1}, we have, for $n$ large enough,
\begin{align}\label{3-2-equation}
0=&\lim\limits_{n\rightarrow\infty}\langle I_{\psi}'(u_n),u_n\phi\rangle\nonumber\\
=&\lim\limits_{n\rightarrow\infty}\Bigg\{\left(1+b\int_{\Omega}|\nabla u_n|^2\mathrm{d}x\right)\int_{\Omega}\nabla {u_n}\nabla (u_n \phi)\mathrm{d}x-\lambda\int_{\Omega}u_n^2 \phi\mathrm{d}x\nonumber\\
&-\mu\int_{\Omega}u_n^2 \phi\log u_n^2\mathrm{d}x-\frac{1}{3}\psi'(\|u_n\|^2)\|u_n\|_6^6\int_{\Omega}\nabla u_n \nabla(u_n \phi)\mathrm{d}x
-\psi(\|u_n\|^2)\int_{\Omega} u_n^6 \phi\mathrm{d}x\Bigg\}\nonumber\\
=&\lim\limits_{n\rightarrow\infty}\Bigg\{\left(1+b\int_{\Omega}|\nabla u_n|^2\mathrm{d}x\right)\int_{\Omega}|\nabla {u_n}|^2\phi\mathrm{d}x
+\left(1+b\int_{\Omega}|\nabla u_n|^2\mathrm{d}x\right)\int_{\Omega}\nabla {u_n}u_n\nabla \phi\mathrm{d}x\nonumber\\
&-\lambda\int_{\Omega}u_n^2 \phi\mathrm{d}x-\mu\int_{\Omega}u_n^2 \phi\log u_n^2\mathrm{d}x
-\int_{\Omega} u_n^6 \phi\mathrm{d}x\Bigg\}\nonumber\\
\geq&\lim\limits_{n\rightarrow\infty}\Bigg\{\left(1+b\int_{\Omega}|\nabla u_n|^2\phi\mathrm{d}x\right)
\int_{\Omega}|\nabla {u_n}|^2\phi\mathrm{d}x
-\int_{\Omega} u_n^6 \phi\mathrm{d}x\Bigg\}+o(1),
\end{align}
where $o(1)\rightarrow0$ as $\rho\rightarrow0$.
Letting $\rho\rightarrow 0$ in \eqref{3-2-equation}, we have
$$0\geq(1+b\mu_k)\mu_k-\nu_k.$$
Combing this with \eqref{uv}, we have
\begin{align}\label{3-2-Ck}
\mu_k\geq C_{b,S}.
\end{align}
Since $I_{\psi}(u_n)\rightarrow c$ and $I'_{\psi}(u_n)\rightarrow0$ in $H^{-1}(\Omega)$ as $n\rightarrow\infty$,  by using the boundedness of $\{u_n\}$,
\eqref{ineq-3-1} and the fact that $\psi(\|u_n\|^2)=1$ and $\psi'(\|u_n\|^2)=0$ for $n$ large enough, one has
\begin{align}\label{3-2-ineq2}
c=&\lim\limits_{n\rightarrow\infty}\left\{I_{\psi}(u_n)-\frac{1}{6}\langle I_{\psi}'(u_n),u_n\rangle\right\}\nonumber\\
=&\frac{1}{2}\|u_n\|^2+\frac{b}{4}\|u_n\|^4-\frac{\lambda}{2}\|u_n\|_2^2+\frac{\mu}{2}\|u_n\|_2^2
-\frac{\mu}{2}\int_{\Omega}u_n^2\log{u_n^2}\mathrm{d}x\nonumber\\
&-\frac{1}{6}\psi(\|u_n\|^2)\|u_n\|_{6}^{6}-\frac{1}{6}\|u_n\|^2-\frac{b}{6}\|u_n\|^4
+\frac{\lambda}{6}\|u_n\|_2^2+\frac{\mu}{6}\int_{\Omega}u_n^2\log{u_n^2}\mathrm{d}x\nonumber\\
&+\frac{1}{18}\psi'(\|u_n\|^2)\|u_n\|^2\|u_n\|_{6}^{6}+\frac{1}{6}\psi(\|u_n\|^2)\|u_n\|_{6}^{6}\nonumber\\
=&\lim\limits_{n\rightarrow\infty}\bigg\{\frac{1}{3}\|u_n\|^2+\frac{b}{12}\|u_n\|^4-\frac{\lambda}{3}\|u_n\|_2^2+\frac{\mu}{2}\|u_n\|_2^2
-\frac{\mu}{3}\int_{\Omega}u_n^2\log{u_n^2}\mathrm{d}x\bigg\}\nonumber\\
\geq&\lim\limits_{n\rightarrow\infty}\left\{\frac{1}{3}\int_{\Omega}|\nabla u_n|^2\phi\mathrm{d}x
+\frac{b}{12}\left(\int_{\Omega}|\nabla u_n|^2\phi\mathrm{d}x\right)^2+\frac{\mu}{3}e^{\frac{1}{2}-\frac{\lambda}{\mu}}|\Omega|\right\}.
\end{align}
Letting $\rho\rightarrow0$ in \eqref{3-2-ineq2} and recalling \eqref{3-2-Ck}, one has
\begin{align*}
c\geq&\frac{1}{3}\mu_k+\frac{b}{12}\mu_k^2+\frac{\mu}{3}e^{\frac{1}{2}-\frac{\lambda}{\mu}}|\Omega|\\
\geq&\frac{1}{3}C_{b,S}+\frac{b}{12}C_{b,S}^2+\frac{\mu}{3}e^{\frac{1}{2}-\frac{\lambda}{\mu}}|\Omega|,
\end{align*}
which contradicts with the assumption that $c<\frac{1}{3}C_{b,S}+\frac{b}{12}C_{b,S}^2+\frac{\mu}{3}e^{\frac{1}{2}-\frac{\lambda}{\mu}}|\Omega|$. Consequently, $\mathcal{J}=\emptyset$ and
\begin{equation}\label{3-2-eq1}
\int_{\Omega}|u_n|^{6}\mathrm{d}x\rightarrow\int_{\Omega}|u|^{6}\mathrm{d}x, \ \ as \ \ n\rightarrow \infty.
\end{equation}

Finally, by combining $I_{\psi}'(u_n)\rightarrow 0$ in $H^{-1}(\Omega)$, the boundedness of $\{u_n\}$, and the fact
that $\psi(\|u_n\|^2)=1$ and $\psi'(\|u_n\|^2)=0$ for $n$ large enough with \eqref{3-2-un} and \eqref{3-2-eq1} and
applying Lemma \ref{2-convergence}, we have, for $n$ large enough,
\begin{align}\label{un-u=0}
o_n(1)=&\langle I_{\psi}'(u_n),u_n-u\rangle\nonumber\\
=&(1+b\|u_n\|^2)\int_{\Omega}\nabla u_n\nabla(u_n-u)\mathrm{d}x-\lambda\int_{\Omega}u_n(u_n-u)\mathrm{d}x
-\mu\int_{\Omega}u_n(u_n-u)\log u_n^2\mathrm{d}x\nonumber\\
&-\frac{1}{3}\psi'(\|u_n\|^2)\|u_n\|_{6}^{6}\int_{\Omega}\nabla u_n\nabla(u_n-u)\mathrm{d}x-\psi(\|u_n\|^2)\int_{\Omega} u_n^5(u_n-u)\mathrm{d}x\nonumber\\
=&(1+b\|u_n\|^2)\int_{\Omega}\nabla u_n\nabla(u_n-u)\mathrm{d}x-\lambda\int_{\Omega}u_n(u_n-u)\mathrm{d}x
-\mu\int_{\Omega}u_n(u_n-u)\log u_n^2\mathrm{d}x\nonumber\\
&-\int_{\Omega} u_n^5(u_n-u)\mathrm{d}x\nonumber\\
=&(1+b\|u_n\|^2)\int_{\Omega}\nabla u_n\nabla(u_n-u)\mathrm{d}x+o_n(1),
\end{align}
Then we derive that
$$\int_{\Omega}|\nabla u_n|^2\mathrm{d}x-\int_{\Omega}|\nabla u|^2\mathrm{d}x=o(1), \ \ as \ n\rightarrow\infty,$$
which, together with $u_n\rightharpoonup u \ in \ H_0^1(\Omega)$, implies that $u_n\rightarrow u \ in \ H_0^1(\Omega)$
as $n\rightarrow\infty$. This completes the proof.
\end{proof}

In order to apply the symmetric mountain pass lemma, it remans to prove that the condition $(A_2)$ in
Lemma \ref{symmetric mountain-pass lemma} holds.

\begin{lemma}\label{3-2-Ak}
Assume that $b>0$, $\lambda\in \mathbb{R}$ and $\mu<0$. Then for each $k\in \mathbb{N}$, there exists an
$A_k\in \mathcal{E}_k$ such that $\sup\limits_{u\in A_k}I_{\psi}(u)<0$.
\end{lemma}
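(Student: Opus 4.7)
\textbf{Proof plan for Lemma \ref{3-2-Ak}.} The plan is to exhibit $A_k$ as a small sphere inside a $k$-dimensional subspace of $H_0^1(\Omega)$, and then exploit the fact that, for small $\|u\|$, the logarithmic term $-\frac{\mu}{2}\int_\Omega u^2\log u^2\,\mathrm{d}x$ (which is negative since $\mu<0$ and $\log u^2<0$ when $u$ is small) provides a contribution that dominates every other term in $I_\psi$. Concretely, I would pick any $k$-dimensional subspace $E_k\subset H_0^1(\Omega)$ (for instance, the span of the first $k$ eigenfunctions of $-\Delta$ in $H_0^1(\Omega)$), and set
\[
A_k=\{u\in E_k:\|u\|=r_k\}
\]
for a radius $r_k>0$ chosen small enough. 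Since $A_k$ is homeomorphic to the sphere $S^{k-1}$ via an odd homeomorphism, items $(2)$ and $(4)$ of Proposition \ref{genus-proposition} yield $\gamma(A_k)=k$, so $A_k\in\mathcal E_k$ as required.

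The bulk of the work is showing that $\sup_{u\in A_k}I_\psi(u)<0$ for $r_k$ small. I would parametrise $u=r_k v$ with $v$ ranging over the compact set $\mathcal S_k:=\{v\in E_k:\|v\|=1\}$ and expand the logarithmic term via $\log(r_k v)^2=\log r_k^2+\log v^2$, to obtain
\[
I_\psi(r_k v)=r_k^2\Bigl[\tfrac12+\tfrac{b}{4}r_k^2+\tfrac{\mu-\lambda}{2}\|v\|_2^2-\tfrac{\mu}{2}\log(r_k^2)\|v\|_2^2-\tfrac{\mu}{2}\!\int_\Omega v^2\log v^2\,\mathrm{d}x-\tfrac{\psi(r_k^2)r_k^4}{6}\|v\|_6^6\Bigr].
\]
On the finite-dimensional compact set $\mathcal S_k$, all norms are equivalent, so the quantities $\|v\|_2$, $\|v\|_6$, and $\int_\Omega v^2\log v^2\,\mathrm{d}x$ are uniformly bounded; more importantly, there exists a constant $c_k>0$ such that $\|v\|_2^2\geq c_k$ for every $v\in\mathcal S_k$.

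Because $\mu<0$ and $\log r_k^2\to-\infty$ as $r_k\to 0^+$, the dominant term $-\tfrac{\mu}{2}\log(r_k^2)\|v\|_2^2$ tends to $-\infty$ uniformly in $v\in\mathcal S_k$ (the lower bound $\|v\|_2^2\geq c_k$ being what makes the convergence uniform), while all remaining terms inside the bracket stay bounded. Hence for $r_k>0$ sufficiently small the bracket is uniformly negative over $\mathcal S_k$, giving
\[
\sup_{u\in A_k}I_\psi(u)=\sup_{v\in\mathcal S_k}I_\psi(r_k v)<0,
\]
as desired. The main (mild) obstacle is precisely this uniformity: one must invoke the equivalence of $\|\cdot\|_2$ and $\|\cdot\|$ on the finite-dimensional space $E_k$ to secure the lower bound $\|v\|_2\geq c_k>0$ and thereby guarantee that the logarithmic blow-up dominates uniformly, rather than merely pointwise, on $\mathcal S_k$.
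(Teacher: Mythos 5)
Your proof is correct and follows essentially the same route as the paper: restrict to a $k$-dimensional subspace, take a small sphere $A_k$ (genus $k$ by Proposition \ref{genus-proposition}), factor out $\|u\|^2$ with $v=u/\|u\|$, and use equivalence of norms on the finite-dimensional space to ensure $\|v\|_2^2\geq c_k>0$ so that the term $-\frac{\mu}{2}\log(r_k^2)\|v\|_2^2\to-\infty$ uniformly dominates as $r_k\to0^+$. The only cosmetic difference is that the paper controls $\int_\Omega v^2\log v^2\,\mathrm{d}x$ via the elementary inequality \eqref{2-log3} and the Sobolev embedding rather than by compactness of the unit sphere of $E_k$; both are valid.
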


\begin{proof}
For any $k\in \mathbb{N}$, suppose that $H_k$ is a $k$-dimensional subspace of $H_0^1(\Omega)$. According to
$\mu<0$ and $0\leq\psi(x)\leq1$ on $[0,+\infty)$, by using \eqref{2-log3} with $0<\delta<4$, the Sobolev
embedding theorem and the fact that $C_1\|u\|^2\leq\|u\|_2^2\leq C_2\|u\|^2$ in $H_k$,  one has, for any $u\in H_k\backslash \{0\}$ with $\|u\|<1$,
\begin{align*}
I_{\psi}(u)=&\frac{1}{2}\|u\|^2+\frac{b}{4}\|u\|^4-\frac{\lambda}{2}\|u\|_2^2+\frac{\mu}{2}\|u\|_2^2
-\frac{\mu}{2}\int_{\Omega}u^2\log{u^2}\mathrm{d}x-\frac{1}{6}\psi(\|u\|^2)\|u\|_{6}^{6}\\
=&\frac{1}{2}\|u\|^2+\frac{b}{4}\|u\|^4-\frac{\lambda}{2}\|u\|^2\|v\|_2^2+\frac{\mu}{2}\|u\|^2\|v\|_2^2
-\frac{\mu}{2}\|u\|^2\int_{\Omega}v^2\log{v^2}\mathrm{d}x\\
&-\frac{\mu}{2}\|u\|^2\|v\|_2^2\log{\|u\|^2}-\frac{1}{6}\psi(\|u\|^2)\|u\|^{6}\|v\|_{6}^{6}\\
\leq&\frac{1}{2}\|u\|^2+\frac{b}{4}\|u\|^4+\frac{|\lambda|}{2\lambda_1}\|u\|^2-\frac{\mu}{2}C\|u\|^2\|v\|_{2+\delta}^{2+\delta}
-\frac{\mu}{2}\|u\|^2\|v\|_2^2\log{\|u\|^2}\\
\leq&\frac{1}{2}\|u\|^2+\frac{b}{4}\|u\|^4+\frac{|\lambda|}{2\lambda_1}\|u\|^2-\frac{\mu}{2}C\|u\|^2\|v\|^{2+\delta}
-\frac{\mu}{2}C\|u\|^2\log{\|u\|^2}\\
=&\frac{\|u\|^2}{2}\left(1+\frac{b}{2}\|u\|^2+\frac{|\lambda|}{\lambda_1}-\mu C-\mu C\log{\|u\|^2}\right),
\end{align*}
where $v:=u/\|u\|$ and $\lambda_1$ is the first eigenvalue of $-\Delta$ with the Dirichlet boundary condition.
Noticing that $\mu<0$, then for any $k\in \mathbb{N}$, there exist $\rho_k$ small enough and constant $M_k>0$
such that $I_{\psi}(u)\leq-M_k<0$ when $u\in H_k$ with $\|u\|=\rho_k$.
Set $A_k=\{u\in H_k: \|u\|=\rho_k\}$. It follows from Proposition \ref{genus-proposition} $(4)$ that $\gamma(A_k)=k$. Thus, $A_k\in \mathcal{E}_k.$
This completes the proof.
\end{proof}

{\bf The proof of Theorem \ref{th1.3}.} Assume that $N=3$ and $(b, \lambda,\mu)\in \mathcal{A}_2$.
It is easy to see that $I_{\psi}\in C^1(H_0^1(\Omega),\mathbb{R})$, $I_\psi(0)=0$ and $I_\psi$ is even in $H_0^1(\Omega)$. Then according to
Lemmas \ref{3-bounded-from-below}, \ref{3-PS-condition} and \ref{3-2-Ak}, the conditions $(A_1)$ and $(A_2)$ of Lemma \ref{symmetric mountain-pass lemma} are
satisfied. Hence, it follows from Lemma \ref{symmetric mountain-pass lemma} that $I_\psi$ has a sequence of critical points $\{u_k\}$ converging to zero
with $I_{\psi}(u_k)\leq0$ and $u_k\neq0$. Since $I_{\psi}(u_k)\leq0$, $\widetilde{G}(\|u_k\|)\leq0$ and $\|u_k\|<t_0$, where $\widetilde{G}(t)$ is given in \eqref{G-t}. Recalling the definition of $\psi$, we can deduce that $\{u_k\}$ are also the critical points of $I$. The proof of Theorem \ref{th1.3} is complete.

\par
\section{The case $N=4$}
\setcounter{equation}{0}

In this section, we consider the existence of solutions to problem \eqref{eq1} when $N=4$.
From \eqref{energy} one sees that the critical exponent is exactly the same as the exponent of
the Kirchhoff term in this case, and the interaction between $\|u\|^4$ and $\|u\|_4^4$
results in some interesting phenomena. When $bS^2-1\geq0$, the nonlocal term dominates the
critical one and $I(u)$ is coercive in the whole space $H_0^1(\Omega)$. Conversely,
when $bS^2-1<0$, the critical term plays a dominant role and $I(u)$ is not bounded from below.
Therefore, in this section, we will investigate the existence of infinitely many solutions
to problem \eqref{eq1} under two cases: $bS^2-1\geq0$ and $bS^2-1<0$.

\subsection{The case $bS^2-1\geq0$}

In this subsection, we deal with the case $bS^2-1\geq0$. To the first, we shows that the functional
$I(u)$ is coercive in $H_0^1(\Omega)$.

\begin{lemma}\label{le4.1}
Assume that $bS^2-1\geq0$, $\lambda\in \mathbb{R}$ and $\mu<0$. Then $I(u)$ is coercive in $H_0^1(\Omega)$, that is
\begin{align*}
\lim\limits_{\|u\|\rightarrow+\infty}I(u)=+\infty.
\end{align*}
\end{lemma}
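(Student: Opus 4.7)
The plan is to show coercivity by a direct term-by-term lower bound on $I(u)$, exploiting three facts: (i) the logarithmic/linear combination has a uniform lower bound via inequality \eqref{log-3-1} (derived in the $N=3$ section but valid in any dimension, since it only uses $t\log t\geq -1/e$ from Lemma \ref{logarithmic inequality}); (ii) in dimension $N=4$ we have $2^*=4$, so the Kirchhoff term and the critical term are homogeneous of the same degree in $\|u\|$; and (iii) the Sobolev inequality \eqref{S} gives $\|u\|_4^4\leq S^{-2}\|u\|^4$ when $N=4$.

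First I would apply \eqref{log-3-1} to bound the logarithmic/linear block from below:
\[
-\frac{\lambda}{2}\|u\|_2^2+\frac{\mu}{2}\|u\|_2^2-\frac{\mu}{2}\int_{\Omega}u^2\log u^2\,\mathrm{d}x \;\geq\; \frac{\mu}{2}e^{-\frac{\lambda}{\mu}}|\Omega|.
\]
Next I would combine the Kirchhoff and critical terms: by \eqref{S} with $N=4$,
\[
\frac{b}{4}\|u\|^4-\frac{1}{4}\|u\|_4^4 \;\geq\; \frac{b}{4}\|u\|^4-\frac{1}{4S^2}\|u\|^4 \;=\; \frac{bS^2-1}{4S^2}\|u\|^4 \;\geq\; 0,
\]
where the last inequality uses the assumption $bS^2-1\geq 0$. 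Plugging these two bounds into \eqref{energy} gives
\[
I(u)\;\geq\;\frac{1}{2}\|u\|^2+\frac{bS^2-1}{4S^2}\|u\|^4+\frac{\mu}{2}e^{-\frac{\lambda}{\mu}}|\Omega|.
\]
Since the right-hand side tends to $+\infty$ as $\|u\|\to+\infty$ (the quadratic term alone suffices, while the quartic term is non-negative and the last term is a finite constant), coercivity follows.

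There is essentially no obstacle here; the only point requiring minor care is verifying that the estimate \eqref{log-3-1}, although originally stated within the $N=3$ discussion, holds verbatim in any dimension (its derivation uses only $s\log s\geq -1/e$ with $s=e^{\lambda/\mu-1}u^2$), and that in the borderline case $bS^2=1$ the Kirchhoff–critical combination merely vanishes rather than becoming negative, so the $\tfrac12\|u\|^2$ term is what drives coercivity. This completes the outline.
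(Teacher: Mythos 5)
Your proposal is correct and follows essentially the same route as the paper: the paper's proof also combines the uniform lower bound \eqref{log-3-1} on the logarithmic/linear block with the Sobolev inequality \eqref{S} to arrive at $I(u)\geq \frac{1}{2}\|u\|^2+\frac{bS^2-1}{4S^2}\|u\|^4+\frac{\mu}{2}e^{-\frac{\lambda}{\mu}}|\Omega|$, from which coercivity is immediate. Your additional remarks on the dimension-independence of \eqref{log-3-1} and the borderline case $bS^2=1$ are accurate but not needed beyond what the paper states.
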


\begin{proof}
Since $bS^2-1\geq0$, $\lambda\in \mathbb{R}$ and $\mu<0$, using \eqref{S} and \eqref{log-3-1}, one has
\begin{align}\label{4-ineq1}
I(u)=&\frac{1}{2}\|u\|^2+\frac{b}{4}\|u\|^4-\frac{\lambda}{2}\|u\|_2^2+\frac{\mu}{2}\|u\|_2^2
-\frac{\mu}{2}\int_{\Omega}u^2\log{u^2}\mathrm{d}x
-\frac{1}{4}\|u\|_4^4\nonumber\\
\geq&\frac{1}{2}\|u\|^2+\frac{bS^2-1}{4S^2}\|u\|^4+\frac{\mu}{2}e^{-\frac{\lambda}{\mu}}|\Omega|,
\end{align}
which implies that $I(u)$ is coercive in $H_0^1(\Omega)$. This completes the proof.
\end{proof}

Next, we introduce a global compactness result for $I(u)$ which is also of independent interest.

\begin{lemma}\label{le4.2}
Assume that $bS^2-1\geq0$, $\lambda\in \mathbb{R}$ and $\mu<0$. Then $I(u)$ satisfies the global
$(PS)$ condition, i.e., any sequence $\{u_n\}$ satisfying $I(u_n)$ is bounded and $I'(u_n)\rightarrow 0$
as $n\rightarrow\infty$ has a strongly convergent subsequence.
\end{lemma}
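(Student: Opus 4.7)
The plan is to mimic the Brézis–Lieb based compactness argument used in the proof of Lemma \ref{le3.3}, with the crucial algebraic cancellation now coming from the dimensional dominance condition $bS^{2}-1\geq 0$ instead of a smallness condition on the level.

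First, I would use Lemma \ref{le4.1} to upgrade the $(PS)$ hypothesis to boundedness: since $I$ is coercive and $I(u_n)$ stays bounded, $\{u_n\}$ is bounded in $H_0^1(\Omega)$. After passing to a subsequence I get $u_n\rightharpoonup u$ in $H_0^1(\Omega)$, $u_n\to u$ in $L^p(\Omega)$ for $1\leq p<4$ and $u_n\to u$ a.e.\ in $\Omega$, and I may extract further numerical limits $\|u_n\|^{2}\to L$, $\|u_n-u\|^{2}\to l$ and $\|u_n-u\|_{4}^{4}\to m$; by Brézis--Lieb (Lemma \ref{Brezis-Lieb}) one has $L=\|u\|^{2}+l$ and $\|u_n\|_{4}^{4}\to\|u\|_{4}^{4}+m$.

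Next I would pass to the limit in $\langle I'(u_n),\phi\rangle\to 0$ using the $L^{4/3}$ weak convergence $u_n^{3}\rightharpoonup u^{3}$ and Lemma \ref{2-convergence} for the logarithmic term; because $\|u_n\|^{2}\to L$ (not necessarily $\|u\|^{2}$), the limiting equation satisfied by $u$ is
\begin{equation*}
(1+bL)\int_{\Omega}\nabla u\nabla\phi\,\mathrm{d}x-\lambda\int_{\Omega}u\phi\,\mathrm{d}x-\mu\int_{\Omega}u\phi\log u^{2}\mathrm{d}x-\int_{\Omega}u^{3}\phi\,\mathrm{d}x=0,\qquad\forall\,\phi\in H_0^1(\Omega).
\end{equation*}
Testing this with $\phi=u$ and comparing with the limit of $\langle I'(u_n),u_n\rangle\to 0$ (using the Brézis--Lieb split of $\|u_n\|_{4}^{4}$, $\|u_n\|_{2}^{2}\to\|u\|_{2}^{2}$, and Lemma \ref{2-convergence} for $\int u_n^{2}\log u_n^{2}$), after subtracting I obtain the key identity
\begin{equation*}
l\,(1+bL)=m.
\end{equation*}

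The main obstacle---and where the hypothesis $bS^{2}\geq 1$ enters---is squeezing this identity against the Sobolev inequality $m\leq S^{-2}l^{2}$. Assuming $l>0$, the identity yields
\begin{equation*}
1+b\|u\|^{2}+bl\;=\;1+bL\;\leq\;S^{-2}l,\qquad\text{i.e.,}\qquad 1+b\|u\|^{2}\leq\frac{1-bS^{2}}{S^{2}}\,l.
\end{equation*}
Since $bS^{2}\geq 1$ the right-hand side is non-positive while the left-hand side is at least $1$, a contradiction; hence $l=0$. Finally, $\|u_n\|\to\|u\|$ combined with $u_n\rightharpoonup u$ in the Hilbert space $H_0^1(\Omega)$ gives strong convergence $u_n\to u$, completing the proof.
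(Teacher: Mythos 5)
Your proposal is correct and follows essentially the same route as the paper: coercivity gives boundedness, the Br\'{e}zis--Lieb decomposition applied to $\langle I'(u_n),u\rangle$ and $\langle I'(u_n),u_n\rangle$ yields the identity $l(1+bL)=m$ (the paper writes this as $\|w_n\|^2+b\|w_n\|^4+b\|w_n\|^2\|u\|^2-\|w_n\|_4^4=o_n(1)$), and the Sobolev inequality together with $bS^2\geq1$ forces $l=0$. Your detour through the limiting equation tested with $\phi=u$ is just a repackaging of the paper's direct subtraction, so the two arguments coincide in substance.
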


\begin{proof}
Assume that $\{u_n\}$ is a sequence such that $\{I(u_n)\}$ is bounded and $I'(u_n)\rightarrow 0$
in $H^{-1}(\Omega)$ as $n\rightarrow\infty$. Since $I(u)$ is coercive in $H_0^1(\Omega)$,
$\{u_n\}$ is bounded in $H_0^1(\Omega)$. Consequently, passing to a subsequence if necessary,
there exist a $u\in H_0^1(\Omega)$ such that, as $n\rightarrow\infty$,
\begin{equation}\label{4-1-un-convergence}
\begin{cases}
u_{n}\rightharpoonup u \ in \ H_{0}^1(\Omega),\\
u_{n}\rightarrow u \ in \ L^p(\Omega), \ \ 1\leq p<4,\\
u_n^3\rightharpoonup u^3 \ in \ L^{\frac{4}{3}}(\Omega),\\
u_{n}\rightarrow u \ \ a.e.\ in \ \Omega.
\end{cases}
\end{equation}
To prove $u_n\rightarrow u \ in \ H_0^1(\Omega)$ as $n\rightarrow\infty$, we set $w_n=u_n-u$.
Obviously, $\{w_n\}$ is also bounded in $H_0^1(\Omega)$, and there exists a subsequence of
$\{w_{n}\}$ (still denoted by $\{w_{n}\}$) such that
\begin{align}\label{4-1-wn1}
\lim\limits_{n\rightarrow\infty}\|w_n\|^2=l\geq0.
\end{align}
From $u_{n}\rightharpoonup u$ in $H_{0}^1(\Omega)$ as $n\rightarrow\infty$, there holds
\begin{align}\label{4-1-wn2}
\|u_n\|^{2}=\| w_n\|^2+\| u\|^{2}+o_n(1), \qquad  n\rightarrow\infty.
\end{align}
Combining this with \eqref{2-log-convergence2} and \eqref{4-1-un-convergence}, we have
\begin{align}\label{4-1-Iun-u}
o_n(1)=&\langle I'(u_n),u\rangle\nonumber\\
=&\left(1+b\|u_n\|^2\right)\int_{\Omega}\nabla u_n\nabla u\mathrm{d}x-\lambda\int_{\Omega}u_n u\mathrm{d}x-\mu\int_{\Omega}u_n u\log u_n^2\mathrm{d}x-
\int_{\Omega}u_n^3 u\mathrm{d}x\nonumber\\
=&\|u\|^2+b\|u_n\|^2\|u\|^2-\lambda\|u\|_2^2-\mu\int_{\Omega}u^2\log u^2\mathrm{d}x-\|u\|_4^4+o_n(1)\nonumber\\
=&\|u\|^2+b\|u\|^4+b\|w_n\|^2\|u\|^2-\lambda\|u\|_2^2-\mu\int_{\Omega}u^2\log u^2\mathrm{d}x-\|u\|_4^4+o_n(1).
\end{align}
Then according to $I'(u_n)\rightarrow 0$  as $n\rightarrow\infty$ and the boundedness of $\{u_n\}$, using Lemma \ref{Brezis-Lieb}, \eqref{S},
\eqref{2-log-convergence1}, \eqref{4-1-un-convergence}, \eqref{4-1-wn2} and \eqref{4-1-Iun-u}, one has
\begin{align}\label{4-1-Iun-un}
o_n(1)=&\langle I'(u_n),u_n\rangle\nonumber\\
=&\left(1+b\|u_n\|^2\right)\|u_n\|^2-\lambda \|u_n\|_2^2-\mu\int_{\Omega}u_n^2\log u_n^2\mathrm{d}x-\|u_n\|_4^4\nonumber\\
=&\|w_n\|^2+\|u\|^2+b\|w_n\|^4+2b\|w_n\|^2\|u\|^2+b\|u\|^4-\lambda\|u\|_2^2-\mu\int_{\Omega}u^2\log u^2\mathrm{d}x\nonumber\\
&-\|w_n\|_4^4-\|u\|_4^4+o_n(1)\nonumber\\
=&\|w_n\|^2+b\|w_n\|^4+b\|w_n\|^2\|u\|^2-\|w_n\|_4^4+o_n(1)\nonumber\\
\geq&\|w_n\|^2+b\|w_n\|^4+b\|w_n\|^2\|u\|^2-\frac{1}{S^2}\|w_n\|^4+o_n(1).
\end{align}
Recalling \eqref{4-1-wn1} and letting $n\rightarrow\infty$ in \eqref{4-1-Iun-un}, we deduce that
\begin{align*}
0\geq l+\frac{bS^2-1}{S^2}l^2+bl\|u\|^2,
\end{align*}
which ensures $l=0$ since $bS^2-1\geq0$. Therefore, $u_n\rightarrow u$ in $H_0^1(\Omega)$ as
$n\rightarrow\infty$. This completes the proof.
\end{proof}

\begin{lemma}\label{4-1-Ak}
Assume that $bS^2-1\geq0$, $\lambda\in \mathbb{R}$ and $\mu<0$. Then for each $k\in \mathbb{N}$,
there exists an $A_k\in \mathcal{E}_k$ such that $\sup\limits_{u\in A_k}I(u)<0$.
\end{lemma}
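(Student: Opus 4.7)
The plan is to mimic the argument of Lemma~\ref{3-2-Ak}, since the mechanism that makes $I$ negative on a small sphere of a finite-dimensional subspace is driven entirely by the logarithmic term and is insensitive to whether the top-order term is $\|u\|_4^4$ or a truncation of $\|u\|_6^6$. In particular, no truncation is needed here: when $\|u\|$ is small, the quartic critical term $-\tfrac{1}{4}\|u\|_4^4$ is of higher order than the logarithmic contribution and is non-positive anyway, so it does not fight against the argument.

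First I would fix, for each $k\in\mathbb{N}$, a $k$-dimensional subspace $H_k\subset H_0^1(\Omega)$, which is possible because $H_0^1(\Omega)$ is infinite-dimensional. For $u\in H_k\setminus\{0\}$ I write $u=\|u\|v$ with $\|v\|=1$ and split
\[
\int_\Omega u^2\log u^2\,\mathrm{d}x \;=\; \|u\|^2\|v\|_2^2\log\|u\|^2 \;+\; \|u\|^2\int_\Omega v^2\log v^2\,\mathrm{d}x.
\]
On the finite-dimensional space $H_k$ all norms are equivalent, so $\|v\|_2^2\le C_k$, $\|v\|_4^4\le C_k$, and $\|v\|_{2+\delta}^{2+\delta}\le C_k$ for a fixed $0<\delta<2$. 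Applying \eqref{2-log3} with exponent $\delta$ gives $v^2\log v^2\le \tfrac{1}{\delta e}\,v^{2+\delta}$, whence $\int_\Omega v^2\log v^2\,\mathrm{d}x \le C_k$.

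Plugging these estimates into \eqref{energy}, using also $\|u\|_2^2\le \lambda_1^{-1}\|u\|^2$ to handle $-\tfrac{\lambda}{2}\|u\|_2^2$, and discarding the non-positive term $-\tfrac{1}{4}\|u\|_4^4$, I would obtain for $u\in H_k$ with $\|u\|<1$ an inequality of the form
\[
I(u)\;\le\;\frac{\|u\|^2}{2}\Bigl(1+\tfrac{b}{2}\|u\|^2+\tfrac{|\lambda|}{\lambda_1}-\mu C_k-\mu C_k\log\|u\|^2\Bigr).
\]
Because $\mu<0$, the coefficient $-\mu C_k$ in front of $\log\|u\|^2$ is positive, while $\log\|u\|^2\to-\infty$ as $\|u\|\to0^+$. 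Hence for $\rho_k>0$ small enough the parenthesis becomes smaller than any prescribed negative number, and in particular there exists $M_k>0$ with $I(u)\le -M_k$ for every $u\in H_k$ with $\|u\|=\rho_k$.

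Finally I would set $A_k:=\{u\in H_k:\|u\|=\rho_k\}$. Since $A_k$ is homeomorphic (via $u\mapsto u/\rho_k$) to the unit sphere of $H_k$, which is a $(k-1)$-sphere, Proposition~\ref{genus-proposition}(4) gives $\gamma(A_k)=k$, so $A_k\in\mathcal{E}_k$, and by construction $\sup_{u\in A_k}I(u)\le-M_k<0$. I do not anticipate a genuine obstacle in carrying this plan out: the only points that require care are the correct sign tracking (so that $-\mu\log\|u\|^2$ is recognized as a large \emph{negative} contribution for $\|u\|$ small) and the use of equivalence of norms on $H_k$, both of which are standard.
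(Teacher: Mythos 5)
Your proposal is correct and follows exactly the route the paper intends: the paper's own proof of this lemma is simply the remark that it is ``similar to that of Lemma \ref{3-2-Ak}'', and your argument reproduces that computation, correctly noting that here one may discard the non-positive term $-\tfrac14\|u\|_4^4$ outright instead of invoking a truncation. The only point worth making explicit is that the coefficient of $\log\|u\|^2$ requires the \emph{lower} bound $\|v\|_2^2\geq c_k>0$ on the unit sphere of $H_k$ (not just the upper bounds you list), but this follows from the same equivalence of norms you already invoke.
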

\begin{proof}
The proof is similar to that of Lemma \ref{3-2-Ak}. Here we omit it.
\end{proof}

{\bf The proof of Theorem \ref{th1.4} with condition $(i)$.}
Assume that $N=4$, $bS^2-1\geq0$, $\lambda\in \mathbb{R}$ and $\mu<0$. It is obvious that
$I\in C^1(H_0^1(\Omega),\mathbb{R})$, $I(0)=0$ and $I$ is even in $H_0^1(\Omega)$, which,
together with Lemmas \ref{le4.1}, \ref{le4.2} and \ref{4-1-Ak}, imply that the conditions
$(A_1)$ and $(A_2)$ of Lemma \ref{symmetric mountain-pass lemma-1} are
vaild. Therefore, by Lemma \ref{symmetric mountain-pass lemma-1}, $I$ has a sequence of
solutions $\{u_k\}$ converging to zero with $I(u_k)\leq0$ and $u_k\neq0$. This completes
the proof of Theorem \ref{th1.4} with condition $(i)$.

\subsection{The case $bS^2-1<0$}

In this subsection, we consider the case $bS^2-1<0$. Different from the case $bS^2-1\geq0$,
the energy functional is on longer bounded from below and fails to satisfy the global $(PS)$
condition. So the method used in subsection 4.1 can not be applied to this case directly.
In order to overcome this difficulty, we need to introduce proper truncation on the critical term.

Since $\lambda\in \mathbb{R}$, $\mu<0$, $b>0$ and $bS^2-1<0$, similarly to the proof of \eqref{4-ineq1},
we have
\begin{align*}
I(u)&=\frac{1}{2}\|u\|^2+\frac{b}{4}\|u\|^4-\frac{\lambda}{2}\|u\|_2^2+\frac{\mu}{2}\|u\|_2^2
-\frac{\mu}{2}\int_{\Omega}u^2\log{u^2}\mathrm{d}x
-\frac{1}{4}\|u\|_{4}^{4}\nonumber\\
&\geq \frac{1}{2}\|u\|^2-\frac{1-bS^2}{4S^2}\|u\|^4+\frac{\mu}{2}e^{-\frac{\lambda}{\mu}}|\Omega|.
\end{align*}
Set
\begin{align*}
M(t)=\frac{1}{2}t^2-\frac{1-bS^2}{4S^2}t^4+\frac{\mu}{2}e^{-\frac{\lambda}{\mu}}|\Omega|, \ \ \ t\geq0,
\end{align*}
Direct calculation shows that $M'(t)>0$ for $0<t<\widetilde{t}$, $M'(t)<0$ for $t>\widetilde{t}$ and
$M(t)$ takes its maximum at $\widetilde{t}$, where $\widetilde{t}:=\sqrt{\frac{S^2}{1-bS^2}}$.
Set
\begin{eqnarray*}
\widetilde{M}:=M(\widetilde{t})=\frac{S^2}{4(1-bS^2)}+\frac{\mu}{2}e^{-\frac{\lambda}{\mu}}|\Omega|.
\end{eqnarray*}
Since $(b,\lambda,\mu)\in \mathcal{B}_0$, it follows from $\dfrac{S^2}{4(1-bS^2)}+\dfrac{\mu}{4}e^{1-\frac{\lambda}{\mu}}|\Omega|>0$
that $\dfrac{S^2}{4(1-bS^2)}+\dfrac{\mu}{2}e^{-\frac{\lambda}{\mu}}|\Omega|>0$.
Then $M_0=\dfrac{\widetilde{M}}{2}=\dfrac{S^2}{8(1-bS^2)}+\dfrac{\mu}{4}e^{-\frac{\lambda}{\mu}}|\Omega|>0$
and there exists a unique $t_0\in(0,\widetilde{t})$ such that $M(t_0)=M_0$.

Define a truncation function $\zeta(x)\in C^1[0,\infty)$ such that
\begin{equation*}
\zeta(x)=\begin{cases}
1, \ \ \ \ \  &0\leq x \leq t_0^2,\\
0\leq\zeta(x)\leq1,&t_0^2\leq x \leq \widetilde{t}^2,\\
\frac{2S^2}{x}+\frac{2\mu S^2 e^{-\frac{\lambda}{\mu}}|\Omega|-4S^2\widetilde{M}}{x^2}+bS^2, &x\in [\widetilde{t}^2,+\infty),
\end{cases}
\end{equation*}
and consider the following truncated functional defined in $H_0^1(\Omega)$
$$I_{\zeta}(u)=\frac{1}{2}\|u\|^2+\frac{b}{4}\|u\|^4-\frac{\lambda}{2}\|u\|_2^2+\frac{\mu}{2}\|u\|_2^2
-\frac{\mu}{2}\int_{\Omega}u^2\log{u^2}\mathrm{d}x-\frac{1}{4}\zeta(\|u\|^2)\|u\|_{4}^{4}.$$
The first Fr\'{e}chet derivative of $I_{\zeta}(u)$ is denoted by
\begin{align*}
\langle I_{\zeta}'(u),\phi\rangle=&\left(1+b\|u\|^2\right)\int_{\Omega}\nabla u\nabla \phi\mathrm{d}x-\lambda\int_{\Omega}u\phi\mathrm{d}x-\mu\int_{\Omega}u\phi\log u^2\mathrm{d}x\nonumber\\
&-\frac{1}{2}\|u\|_4^4\zeta'(\|u\|^2)\int_{\Omega}\nabla u\nabla \phi\mathrm{d}x-\zeta(\|u\|^2)\int_{\Omega} u^3\phi\mathrm{d}x,
\ \ \ \ \ \forall\,u,\phi\in H_0^1(\Omega).
\end{align*}

Now, we show that the truncated energy functional $I_{\zeta}(u)$ is bounded from below in $H_0^1(\Omega)$.

\begin{lemma}\label{4-bounded-from-below}
Assume that $b>0$, $\lambda\in \mathbb{R}$, $\mu<0$ and $bS^2-1<0$. Then $I_{\zeta}(u)$ is
bounded from below in $H_0^1(\Omega)$.
\end{lemma}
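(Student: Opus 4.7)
The plan is to mimic the structure of the proof of Lemma \ref{3-bounded-from-below} but with the new truncation $\zeta$ and the subcritical/critical balance relevant to $N=4$. The starting point is the same elementary logarithmic estimate \eqref{log-3-1}, which, combined with the Sobolev inequality $\|u\|_4^4 \leq S^{-2}\|u\|^4$, gives the pointwise lower bound
\begin{align*}
I_{\zeta}(u) \geq \frac{1}{2}\|u\|^2 + \frac{b}{4}\|u\|^4 - \frac{\zeta(\|u\|^2)}{4S^2}\|u\|^4 + \frac{\mu}{2}e^{-\frac{\lambda}{\mu}}|\Omega| =: \widetilde{M}(\|u\|).
\end{align*}
Thus the question is reduced to showing that the real-valued function $\widetilde{M}(t)$ is bounded from below on $[0,\infty)$, which can be done by splitting into the three regions prescribed by the definition of $\zeta$.

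On $[0,t_0]$, we have $\zeta(t^2)=1$, so $\widetilde{M}(t)=M(t)$, the function introduced just before the definition of $\zeta$. Since $M$ is increasing on $[0,\widetilde{t}\,]$ and $t_0<\widetilde{t}$, it follows that $\widetilde{M}(t)\geq M(0)=\tfrac{\mu}{2}e^{-\lambda/\mu}|\Omega|$ on this interval. On the intermediate interval $[t_0,\widetilde{t}\,]$, we only have $0\leq \zeta(t^2)\leq 1$, so $\widetilde{M}(t)\geq M(t)\geq M(0)$ again.

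The key step is the outer region $t>\widetilde{t}$. Here $\zeta$ was precisely designed so that
\begin{align*}
\frac{\zeta(t^2)}{4S^2}t^4 = \frac{1}{2}t^2 + \frac{b}{4}t^4 + \frac{\mu e^{-\lambda/\mu}|\Omega|-2\widetilde{M}}{2},
\end{align*}
i.e., the quadratic and quartic pieces of $\widetilde{M}(t)$ cancel exactly, leaving
\begin{align*}
\widetilde{M}(t) = \widetilde{M},\qquad t\geq \widetilde{t}.
\end{align*}
Combining the three regions yields $I_{\zeta}(u)\geq \min\{\tfrac{\mu}{2}e^{-\lambda/\mu}|\Omega|,\widetilde{M}\}$ for every $u\in H_0^1(\Omega)$, which is the desired lower bound.

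The only delicate point is the algebra in the outer region: one must verify that the specific form of $\zeta$ on $[\widetilde{t}^{\,2},\infty)$ really produces the cancellation claimed above (this is exactly why the coefficients $\tfrac{2S^2}{x}$, $\tfrac{2\mu S^2 e^{-\lambda/\mu}|\Omega|-4S^2\widetilde{M}}{x^2}$, and $bS^2$ appear in the definition of $\zeta$). Once this routine identity is checked, the conclusion follows without further analysis, and in particular no use of the $(PS)$-type compactness or concentration-compactness machinery is needed at this stage.
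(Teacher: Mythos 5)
Your proof is correct and follows essentially the same route as the paper: both reduce the claim to the lower bound $I_{\zeta}(u)\geq M_{\zeta}(\|u\|)$ with $M_{\zeta}(t)=\frac{1}{2}t^2+\frac{b}{4}t^4-\frac{1}{4S^2}\zeta(t^2)t^4+\frac{\mu}{2}e^{-\lambda/\mu}|\Omega|$ and then check the three regions determined by $\zeta$, with the outer region collapsing to the constant $\widetilde{M}$ by the designed cancellation. Your explicit verification of that cancellation, and the observation $M_\zeta(t)-M(t)=\frac{t^4}{4S^2}(1-\zeta(t^2))\geq 0$ on the middle region, are exactly the (routine) checks the paper leaves implicit.
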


\begin{proof}
For any $u \in H_0^1(\Omega)\backslash\{0\}$, since $b>0$, $\lambda\in \mathbb{R}$, $\mu<0$
and $bS^2-1<0$, according to \eqref{S}, \eqref{log-3-1} and the definition of the truncation
function $\zeta(x)$, one has
\begin{align}\label{4-Iu}
I_{\zeta}(u)=&\frac{1}{2}\|u\|^2+\frac{b}{4}\|u\|^4-\frac{\lambda}{2}\|u\|_2^2
+\frac{\mu}{2}\|u\|_2^2-\frac{\mu}{2}\int_{\Omega}u^2\log{u^2}\mathrm{d}x
-\frac{1}{4}\zeta(\|u\|^2)\|u\|_{4}^{4}\nonumber\\
\geq&\frac{1}{2}\|u\|^2+\frac{b}{4}\|u\|^4-\frac{1}{4S^2}\zeta(\|u\|^2)\|u\|^{4}
+\frac{\mu}{2}e^{-\frac{\lambda}{\mu}}|\Omega|.
\end{align}
Define
\begin{align*}
M_{\zeta}(t)=\frac{1}{2}t^2+\frac{b}{4}t^4-\frac{1}{4S^2}\zeta(t^2)t^{4}
+\frac{\mu}{2}e^{-\frac{\lambda}{\mu}}|\Omega|, \ \ \ t\geq0.
\end{align*}
From the definition of $\zeta(x)$, we can deduce that $M_{\zeta}(t)=M(t)$ for $0\leq t \leq t_0$,
$M_{\zeta}(t)\geq M(t)$ for $t_0\leq t \leq \widetilde{t}$ and
$M_{\zeta}(t)=\widetilde{M}$ for $t\geq \widetilde{t}$.
Thus, $I_{\zeta}(u)\geq M_{\zeta}(\|u\|)\geq \frac{\mu}{2}e^{-\frac{\lambda}{\mu}}|\Omega|$
for all $u\in H_0^1(\Omega)$. This completes the proof.
\end{proof}

Next we shall prove that $I_{\zeta}$ satisfies the $(PS)_c$ condition.

\begin{lemma}\label{4-PS-condition}
Assume that  $bS^2-1<0$ and $(b,\lambda,\mu)\in \mathcal{B}_0$. If
$$c<c^*:=\min\left\{\frac{S^2}{8(1-bS^2)}+\frac{\mu}{4}e^{-\frac{\lambda}{\mu}}|\Omega|,
\frac{S^2}{4(1-bS^2)}+\frac{\mu}{4}e^{1-\frac{\lambda}{\mu}}|\Omega|\right\},$$
then $I_{\zeta}$ satisfies the $(PS)_c$ condition.
\end{lemma}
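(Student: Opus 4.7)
The plan is to follow the same architecture as the $N=3$ argument in Lemma~\ref{3-PS-condition}, but with several modifications tailored to $2^*=4$. First, I would show that the truncation is inactive along any $(PS)_c$ sequence. Starting from $I_\zeta(u_n)\to c<c^*$ and the bound $I_\zeta(u)\geq M_\zeta(\|u\|)$ derived in the proof of Lemma~\ref{4-bounded-from-below}, the first component of $c^*$, namely $M_0=\frac{S^2}{8(1-bS^2)}+\frac{\mu}{4}e^{-\lambda/\mu}|\Omega|$, combined with the shape of $M_\zeta$ (equal to $M$ on $[0,t_0]$, and $\geq M_0$ for $t\geq t_0$), forces $\|u_n\|<t_0$ for $n$ large. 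Consequently $\zeta(\|u_n\|^2)=1$ and $\zeta'(\|u_n\|^2)=0$ eventually, and $\{u_n\}$ is bounded in $H_0^1(\Omega)$, so along a subsequence $u_n\rightharpoonup u$ weakly with the usual strong/a.e.\ compactness as in \eqref{3-2-un}.

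Next I would invoke Lemma~\ref{CCP} to extract atoms $(\mu_k,\nu_k)_{k\in\mathcal{J}}$ at points $\{x_k\}$ satisfying $\mu_k\geq S\nu_k^{1/2}$, reflecting $2/2^*=1/2$ for $N=4$. Assuming $\mathcal{J}\neq\emptyset$ and fixing $k\in\mathcal{J}$, I take the standard cutoff $\phi$ supported in $B(x_k,2\rho)$ with $\phi(x_k)=1$ and test $\langle I_\zeta'(u_n),u_n\phi\rangle\to 0$. Because the truncation is inactive, the auxiliary terms can be discarded through the $N=4$ analogues of \eqref{3-11-equation2}--\eqref{3-11-equation4}, and after passing to the limit in $n$ and then in $\rho$ only
\[
0\geq (1+b\mu_k)\mu_k-\nu_k
\]
survives. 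Combining this with $\nu_k\leq\mu_k^2/S^2$ and using $1-bS^2>0$ yields the sharp concentration bound $\mu_k\geq \frac{S^2}{1-bS^2}$.

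To contradict the presence of an atom I exploit the Pohozaev-type identity
\[
I_\zeta(u_n)-\tfrac{1}{4}\langle I_\zeta'(u_n),u_n\rangle=\tfrac{1}{4}\|u_n\|^2-\tfrac{\lambda}{4}\|u_n\|_2^2+\tfrac{\mu}{2}\|u_n\|_2^2-\tfrac{\mu}{4}\int_\Omega u_n^2\log u_n^2\,\mathrm{d}x,
\]
in which both the Kirchhoff term $\frac{b}{4}\|u\|^4$ and the critical term $\frac{1}{4}\|u\|_4^4$ drop out exactly, by virtue of the arithmetic of $N=4$. Repeating the change of variables of \eqref{log-3-1} with the coefficient $\frac{\mu}{2}$ in front of $\|u_n\|_2^2$ (which shifts the exponential from $e^{-\lambda/\mu}$ to $e^{1-\lambda/\mu}$) gives the lower bound $\frac{\mu}{4}e^{1-\lambda/\mu}|\Omega|$ for the logarithmic block. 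Bounding $\|u_n\|^2\geq\int_\Omega|\nabla u_n|^2\phi\,\mathrm{d}x$, passing to the limits $n\to\infty$ and $\rho\to 0$, and inserting the concentration bound yields
\[
c\geq \tfrac{1}{4}\mu_k+\tfrac{\mu}{4}e^{1-\lambda/\mu}|\Omega|\geq \tfrac{S^2}{4(1-bS^2)}+\tfrac{\mu}{4}e^{1-\lambda/\mu}|\Omega|,
\]
contradicting the second component of $c^*$. Hence $\mathcal{J}=\emptyset$ and $\|u_n\|_4^4\to\|u\|_4^4$.

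Finally, testing $\langle I_\zeta'(u_n),u_n-u\rangle\to 0$, with Lemma~\ref{2-convergence} handling the logarithmic piece and the strong $L^4$ convergence handling the critical piece, reduces everything to $(1+b\|u_n\|^2)\int_\Omega\nabla u_n\nabla(u_n-u)\,\mathrm{d}x=o_n(1)$, whence $\|u_n\|\to\|u\|$ and $u_n\to u$ strongly in $H_0^1(\Omega)$. The main obstacle I anticipate is that the two components of $c^*$ play genuinely different roles and neither can be dropped: $M_0$ is what freezes the truncation and restores the untruncated functional so that the $N=4$ Pohozaev cancellation is available, whereas the second threshold is the sharp ceiling for concentration dictated by $bS^2<1$. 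The careful verification that the assumption $(b,\lambda,\mu)\in\mathcal{B}_0$ keeps both thresholds strictly positive, so that $c^*>0$ and the lemma is nonvacuous, is the subtle balancing act at the heart of the argument.
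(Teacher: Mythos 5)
Your proposal is correct and follows essentially the same route as the paper's proof: freezing the truncation via the first threshold $M_0$, applying the Concentration Compactness principle with the cutoff test function to get $\mu_k\geq S^2/(1-bS^2)$, ruling out atoms through $I_\zeta(u_n)-\frac{1}{4}\langle I_\zeta'(u_n),u_n\rangle$ (where the quartic terms cancel for $N=4$) against the second threshold, and concluding strong convergence from $\langle I_\zeta'(u_n),u_n-u\rangle\to 0$. No substantive differences from the paper's argument.
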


\begin{proof}
Since $(b,\lambda,\mu)\in \mathcal{B}_0$, $c^*>0$.
Assume that $\{u_n\}\subset H_0^1(\Omega)$ is a $(PS)_c$ sequence of $I_{\zeta}$.
Since $c<M_0=\frac{S^2}{8(1-bS^2)}+\frac{\mu}{4}e^{-\frac{\lambda}{\mu}}|\Omega|$,
we have $I_{\zeta}(u_n)< M_0$ for $n$ large enough.
Combining this with \eqref{4-Iu} and the definition of $\zeta(x)$, we can derive that $\{u_n\}$ is bounded in $H_0^1(\Omega)$,
$\zeta(\|u_n\|^2)=1$ and $\zeta'(\|u_n\|^2)=0$ for $n$ large enough.
Therefore there exist a subsequence of $\{u_n\}$ (still denoted by $\{u_n\}$) and a $u\in H_0^1(\Omega)$
such that, as $n\rightarrow\infty$,
\begin{equation}\label{4-un-convergence2}
\begin{cases}
u_{n}\rightharpoonup u \ in \ H_{0}^1(\Omega),\\
u_{n}\rightarrow u \ in \ L^p(\Omega), \ \ 1\leq p<4,\\
u_n^3\rightharpoonup u^3 \ in \ L^{\frac{4}{3}}(\Omega),\\
u_{n}\rightarrow u \ \ a.e.\ in \ \Omega.
\end{cases}
\end{equation}
Moreover, according to Lemma \ref{CCP}, there exists an at most countable index set $\mathcal{J}$ and a collection of points $\{x_k\}_{k\in \mathcal{J}}$ such that
\allowdisplaybreaks \begin{align}\label{4-uv1}
&|\nabla u_n|^2\rightharpoonup\mu\geq |\nabla u|^2+\sum_{k\in \mathcal{J}}\mu_k \delta_{x_k}, \ \ \mu_k>0,\nonumber\\
&|u_n|^{4}\rightharpoonup \nu=|u|^{4}+\sum_{k\in \mathcal{J}}\nu_k \delta_{x_k}, \ \ \nu_k>0,
\end{align}
in the measure sense.
In addition, we also have
\begin{equation}\label{4-uv2}
\mu_k \geq S\nu_k^{\frac{1}{2}}, (k\in \mathcal{J}).
\end{equation}

Now we claim that $\mathcal{J}=\emptyset$. Suppose on the contrary that $\mathcal{J}\neq\emptyset$ and fix $k\in \mathcal{J}$.
Define a function $\phi\in C_0^\infty(\Omega)$ satisfying $\phi(x)=1$ in $B(x_k,\rho)$, $\phi(x)=0$ in
$\Omega\backslash B(x_k,2\rho)$, $0\leq \phi(x)\leq1$ otherwise and  $|\nabla\phi|\leq\frac{2}{\rho}$ in $\Omega$,
where $\rho >0$ is an appropriate constant.
Since $I_{\zeta}'(u_n)\rightarrow0$ in $H^{-1}(\Omega)$ as $n\rightarrow\infty$ and $\{u_n\phi\}$ is bounded in $H_0^1(\Omega)$, recalling the fact that $\zeta(\|u_n\|^2)=1$ and $\zeta'(\|u_n\|^2)=0$ for $n$ large enough, we have
\begin{align}\label{4-equation1}
0=&\lim\limits_{n\rightarrow\infty}\langle I_{\zeta}'(u_n),u_n\phi\rangle\nonumber\\
=&\lim\limits_{n\rightarrow\infty}\Bigg\{\left(1+b\|u_n\|^2\right)
\int_{\Omega}\nabla {u_n}\nabla (u_n \phi)\mathrm{d}x-\lambda\int_{\Omega}u_n^2 \phi\mathrm{d}x-\mu\int_{\Omega}u_n^2 \phi\log u_n^2\mathrm{d}x\nonumber\\
&-\frac{1}{2}\|u_n\|_4^4\zeta'(\|u_n\|^2)\int_{\Omega}\nabla u_n \nabla(u_n \phi)\mathrm{d}x
-\zeta(\|u_n\|^2)\int_{\Omega} u_n^4 \phi\mathrm{d}x\Bigg\}\nonumber\\
=&\lim\limits_{n\rightarrow\infty}\Bigg\{\left(1+b\|u_n\|^2\right)\int_{\Omega}\nabla {u_n}\nabla (u_n \phi)\mathrm{d}x
-\lambda\int_{\Omega}u_n^2 \phi\mathrm{d}x-\mu\int_{\Omega}u_n^2 \phi\log u_n^2\mathrm{d}x
-\int_{\Omega} u_n^4 \phi\mathrm{d}x\Bigg\}\nonumber\\
=&\lim\limits_{n\rightarrow\infty}\Bigg\{\left(1+b\|u_n\|^2\right)\int_{\Omega}|\nabla {u_n}|^2\phi\mathrm{d}x
-\int_{\Omega} u_n^4 \phi\mathrm{d}x\Bigg\}+o(1),
\end{align}
where $o(1)\rightarrow0$ as $\rho\rightarrow0$. The last equality comes from
\begin{align}\label{4-equation2}
\lim\limits_{n\rightarrow\infty}\Bigg\{\left(1+b\|u_n\|^2\right)\int_{\Omega}\nabla {u_n}u_n\nabla \phi\mathrm{d}x
-\lambda\int_{\Omega}u_n^2 \phi\mathrm{d}x-\mu\int_{\Omega}u_n^2 \phi\log u_n^2\mathrm{d}x\Bigg\}=o(1),  \ as \  \rho\rightarrow0,
\end{align}
the proof of which is similar to that of \eqref{3-11-equation1} and hence is omitted.
From \eqref{4-equation1}, we have
\begin{align*}
0\geq\lim\limits_{n\rightarrow\infty}\left\{\left(1+b\int_{\Omega}|\nabla {u_n}|^2\phi\mathrm{d}x\right)\int_{\Omega}|\nabla {u_n}|^2\phi\mathrm{d}x
-\int_{\Omega} u_n^4 \phi\mathrm{d}x\right\}+o(1).
\end{align*}
Letting $\rho\rightarrow 0$, we have
$$0\geq(1+b\mu_k)\mu_k-\nu_k.$$
Combing this with \eqref{4-uv2} and recalling $bS^2-1<0$, one has
\begin{align}\label{4-Ck}
\mu_k\geq\frac{S^2}{1-bS^2}.
\end{align}
Since $b>0$, $\lambda\in \mathbb{R}$ and $\mu<0$, by a similar estimate to \eqref{log-3-1}, we have
\begin{align}\label{4-2-log1}
-\frac{\lambda}{4}\|u_n\|_2^2+\frac{\mu}{2}\|u_n\|_2^2-\frac{\mu}{4}\int_{\Omega}u_n^2\log{u_n^2}\mathrm{d}x
\geq\frac{\mu}{4}e^{1-\frac{\lambda}{\mu}}|\Omega|.
\end{align}
Since $I_{\zeta}(u_n)\rightarrow c$ and $I'_{\zeta}(u_n)\rightarrow0$ in $H^{-1}(\Omega)$ as $n\rightarrow\infty$,  by using the boundedness of $\{u_n\}$,
\eqref{4-2-log1} and the fact that $\zeta(\|u_n\|^2)=1$ and $\zeta'(\|u_n\|^2)=0$ for $n$ large enough, we have,
\begin{align}\label{4-equation3}
c=&\lim\limits_{n\rightarrow\infty}\left\{I_{\zeta}(u_n)-\frac{1}{4}\langle I_{\zeta}'(u_n),u_n\rangle\right\}\nonumber\\
=&\lim\limits_{n\rightarrow\infty}\bigg\{\frac{1}{4}\|u_n\|^2-\frac{\lambda}{4}\|u_n\|_2^2+\frac{\mu}{2}\|u_n\|_2^2
-\frac{\mu}{4}\int_{\Omega}u_n^2\log{u_n^2}\mathrm{d}x\bigg\}\nonumber\\
\geq&\lim\limits_{n\rightarrow\infty}\left\{\frac{1}{4}\int_{\Omega}|\nabla u_n|^2\phi\mathrm{d}x
+\frac{\mu}{4}e^{1-\frac{\lambda}{\mu}}|\Omega|\right\}.
\end{align}
Letting $\rho\rightarrow0$ in \eqref{4-equation3}, it follows from \eqref{4-Ck} that
\begin{align*}
c\geq&\frac{1}{4}\mu_k+\frac{\mu}{4}e^{1-\frac{\lambda}{\mu}}|\Omega|
\geq\frac{S^2}{4(1-bS^2)}+\frac{\mu}{4}e^{1-\frac{\lambda}{\mu}}|\Omega|,
\end{align*}
which contradicts with $c<\frac{S^2}{4(1-bS^2)}+\frac{\mu}{4}e^{1-\frac{\lambda}{\mu}}|\Omega|$.
Consequently, $\mathcal{J}=\emptyset$ and
\begin{equation}\label{4-2-un4}
\int_{\Omega}|u_n|^{4}\mathrm{d}x\rightarrow\int_{\Omega}|u|^{4}\mathrm{d}x, \ \ as \ \ n\rightarrow \infty.
\end{equation}

Using $I_{\zeta}'(u_n)\rightarrow 0$ in $H^{-1}(\Omega)$ as $n\rightarrow \infty$, the boundedness of $\{u_n\}$ and the fact that $\zeta(\|u_n\|^2)=1$ and $\zeta'(\|u_n\|^2)=0$ for $n$ large enough again, together with Lemma \ref{2-convergence}, \eqref{4-un-convergence2} and \eqref{4-2-un4},  we have
\begin{align*}
o_n(1)=&\langle I_{\zeta}'(u_n),u_n-u\rangle\nonumber\\
=&\Big(1+b\|u_n\|^2\Big)\int_{\Omega}\nabla u_n\nabla(u_n-u)\mathrm{d}x-\lambda\int_{\Omega}u_n(u_n-u)\mathrm{d}x\\
&-\mu\int_{\Omega}u_n(u_n-u)\log u_n^2\mathrm{d}x-\frac{1}{2}\|u_n\|_4^4\zeta'(\|u_n\|^2)\int_{\Omega}\nabla u_n\nabla(u_n-u)\mathrm{d}x\nonumber\\
&-\zeta(\|u_n\|^2)\int_{\Omega} u_n^3(u_n-u)\mathrm{d}x\nonumber\\
=&\left(1+b\|u_n\|^2\right)\int_{\Omega}\nabla u_n\nabla(u_n-u)\mathrm{d}x-\lambda\int_{\Omega}u_n(u_n-u)\mathrm{d}x\\
&-\mu\int_{\Omega}u_n(u_n-u)\log u_n^2\mathrm{d}x-\int_{\Omega} u_n^3(u_n-u)\mathrm{d}x\nonumber\\
=&(1+b\|u_n\|^2)\int_{\Omega}\nabla u_n\nabla(u_n-u)\mathrm{d}x+o_n(1),
\end{align*}
which implies that
$$\int_{\Omega}|\nabla u_n|^2\mathrm{d}x-\int_{\Omega}|\nabla u|^2\mathrm{d}x=o(1), \ \ as \ n\rightarrow\infty.$$
Combining this with $u_n\rightharpoonup u \ in \ H_0^1(\Omega)$  as $n\rightarrow\infty$, we have $u_n\rightarrow u \ in \ H_0^1(\Omega)$  as $n\rightarrow\infty$. This completes the proof.
\end{proof}

\begin{lemma}\label{4-2-Ak}
Assume that $b>0$, $\lambda\in \mathbb{R}$ and $\mu<0$. Then for any $k\in \mathbb{N}$, there exists an $A_k\in \mathcal{E}_k$ such that $\sup\limits_{u\in A_k}I_\zeta(u)<0$.
\end{lemma}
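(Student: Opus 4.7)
The plan is to adapt the argument of Lemma \ref{3-2-Ak} essentially verbatim, taking advantage of the crucial fact that the truncation function $\zeta$ equals $1$ in a neighbourhood of the origin, so $I_\zeta(u)=I(u)$ whenever $\|u\|\leq t_0$. The geometric condition $(A_2)$ concerns only the local behaviour of the functional near $0$, and the truncation therefore introduces no extra difficulty.

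For each $k\in\mathbb{N}$, I would fix any $k$-dimensional subspace $H_k\subset H_0^1(\Omega)$. On $H_k$ all norms are equivalent, so there exist constants $C_1,C_2>0$ with $C_1\|u\|^2\leq\|u\|_2^2\leq C_2\|u\|^2$ for every $u\in H_k$. For $u\in H_k\setminus\{0\}$ with $\|u\|<1$, I would set $v:=u/\|u\|$ and split
\begin{equation*}
\int_\Omega u^2\log u^2\,\mathrm{d}x = \|u\|^2\|v\|_2^2\log\|u\|^2+\|u\|^2\int_\Omega v^2\log v^2\,\mathrm{d}x.
\end{equation*}
Inequality \eqref{2-log3} (with a small exponent $\delta$) together with Sobolev's embedding and norm equivalence on $H_k$ provides a uniform control of the second integral as $v$ ranges over the unit sphere of $H_k$.

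Collecting terms and using $\zeta(\|u\|^2)=1$ for $\|u\|$ sufficiently small, the estimate reduces to an inequality of the form
\begin{equation*}
I_\zeta(u) \leq \frac{\|u\|^2}{2}\Bigl(1+\tfrac{b}{2}\|u\|^2+\tfrac{|\lambda|C_2}{2}-\mu K -\mu C_1\log\|u\|^2\Bigr)
\end{equation*}
for some positive constant $K$ depending only on $H_k$. Since $\mu<0$, the coefficient $-\mu C_1$ in front of $\log\|u\|^2$ is strictly positive, so the whole bracket tends to $-\infty$ as $\|u\|\to 0^+$. Hence I can choose $\rho_k>0$ small enough so that $I_\zeta(u)\leq -M_k<0$ uniformly on $A_k:=\{u\in H_k:\|u\|=\rho_k\}$. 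This sphere is homeomorphic to $S^{k-1}$ and so has genus $k$ by Proposition \ref{genus-proposition}(4), which gives $A_k\in\mathcal{E}_k$.

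The only step requiring care is verifying that $\int_\Omega v^2\log v^2\,\mathrm{d}x$ is uniformly bounded above on the unit sphere of $H_k$, which follows at once from the equivalence of norms on the finite-dimensional space $H_k$. Beyond that, the argument is a line-by-line translation of Lemma \ref{3-2-Ak} to the $N=4$ setting; dropping the critical exponent from $6$ to $4$ does not affect the near-origin analysis, and I do not anticipate any genuine obstacle.
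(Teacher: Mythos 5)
Your proposal is correct and follows essentially the same route as the paper, which itself proves this lemma simply by remarking that the argument of Lemma \ref{3-2-Ak} carries over. Your key observations---that the truncated critical term is harmless near the origin, that norm equivalence on the finite-dimensional subspace $H_k$ gives the lower bound $\|v\|_2^2\geq C_1$ making the $-\mu C_1\log\|u\|^2$ term dominate as $\|u\|\to 0^+$, and that the sphere in $H_k$ has genus $k$---are exactly the ingredients of the paper's argument.
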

\begin{proof}
The proof is similar to that of Lemma \ref{3-2-Ak}. Here we omit it.
\end{proof}

{\bf Proof of Theorem \ref{th1.4} with condition $(ii)$.} Assume that $N=4$, $bS^2-1<0$ and
$(b,\lambda,\mu)\in \mathcal{B}_0$. Notice that $I_{\zeta}\in C^1(H_0^1(\Omega),\mathbb{R})$,
$I_{\zeta}(0)=0$ and $I_{\zeta}(u)$ is even in $H_0^1(\Omega)$. Then by means of
Lemmas \ref{4-bounded-from-below}, \ref{4-PS-condition} and \ref{4-2-Ak}, the assumptions
$(A_1)$ and $(A_2)$ of Lemma \ref{symmetric mountain-pass lemma} are satisfied.
From Lemma \ref{symmetric mountain-pass lemma}, $I_\zeta$ has a sequence of critical
points $\{u_k\}$ converging to zero with $I_{\zeta}(u_k)\leq0$ and
$u_k\neq0$. Since $I_{\zeta}(u_k)\leq0$, then $M_{\zeta}(\|u_k\|)\leq0$ and $\|u_k\|<t_0$,
where $M_{\zeta}(t)$ is given in the proof of Lemma \ref{4-bounded-from-below}. According
to the definition of $\zeta$, we know that the sequence $\{u_k\}$ are also the critical
points of $I$. Consequently, the proof of Theorem \ref{th1.4} with condition $(ii)$ is complete.

\par
\section{The case $N\geq5$}
\setcounter{equation}{0}

In this section, we consider problem \eqref{eq1} with $N\geq5$. In this case, the critical
Sobolev exponent is strictly less than $4$, which, together with the fact that the sign of
the logarithmic term is indefinite and that the logarithmic term does not satisfy the
Ambrosetti-Rabinowitz type condition, makes it impossible to prove the compactness
of the $(PS)$ sequences. Moreover, the methods used in the above two cases
are no longer applicable. However, we notice that the energy functional possesses
the local minimum structure around $0$. In view of this, we avoid the lack of
compactness of $I(u)$ by considering the existence of a local minimum solution on an appropriate
ball in $H_0^1(\Omega)$. Furthermore, by imposing additional conditions on the parameters,
we show that the local minimum solution is also a ground state solution.

We first prove the following lemma, which shows that there exists a constant $\widetilde{r}$
such that $I(u)>0$ for all $u\in H_0^1(\Omega)$ with
$\|u\|=\widetilde{r}$.

\begin{lemma}\label{le5.1}
Assume that $(b,\lambda,\mu)\in \mathcal{C}_0$.  Then there exists $\widetilde{r}>0$ such
that $I(u)>0$ for all $u\in H_0^1(\Omega)$ with
$\|u\|=\widetilde{r}$.
\end{lemma}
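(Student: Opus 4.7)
The plan is to estimate $I$ from below by a purely radial function of $\|u\|$ and then select the radius $\widetilde{r}$ tailored precisely so that the resulting lower bound reproduces the quantity appearing in the definition of $\mathcal{C}_0$. First I would apply the logarithmic estimate \eqref{log-3-1}, already used in the $N=3$ case, to obtain
$$-\frac{\lambda}{2}\|u\|_2^2+\frac{\mu}{2}\|u\|_2^2-\frac{\mu}{2}\int_{\Omega}u^2\log u^2\,\mathrm{d}x\ \geq\ \frac{\mu}{2}e^{-\lambda/\mu}|\Omega|,$$
together with the Sobolev embedding $\|u\|_{2^*}^{2^*}\leq S^{-2^*/2}\|u\|^{2^*}$. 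Substituting both into the expression \eqref{energy} for $I$ yields $I(u)\geq g(\|u\|)$, where
$$g(t)=\frac{1}{2}t^2+\frac{b}{4}t^4+\frac{\mu}{2}e^{-\lambda/\mu}|\Omega|-\frac{1}{2^{*}}S^{-2^{*}/2}t^{2^{*}},\qquad t\geq 0.$$

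Next I would set $\widetilde{r}:=\sqrt{C_N/b}$, i.e., $\widetilde{r}^2=C_N/b=\frac{4}{b(N-4)}$. A direct substitution (using $\widetilde{r}^{2^{*}}=(C_N/b)^{2^{*}/2}$) gives
$$g(\widetilde{r})=\frac{1}{2b}C_N+\frac{1}{4b}C_N^2-\frac{1}{2^{*}}\left(\frac{C_N}{bS}\right)^{2^{*}/2}+\frac{\mu}{2}e^{-\lambda/\mu}|\Omega|,$$
which is exactly the quantity required to be strictly positive in the definition of $\mathcal{C}_0$. Since $g$ depends only on $\|u\|$, it follows that $I(u)\geq g(\widetilde{r})>0$ for every $u\in H_0^1(\Omega)$ with $\|u\|=\widetilde{r}$, proving the lemma.

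I do not anticipate any genuine obstacle here, since both ingredients (the logarithmic bound and the Sobolev embedding) are already available in the paper. The only mildly nontrivial point is the \emph{choice} of the radius $\widetilde{r}^2=C_N/b$: this is not the maximizer of $g$ in general, but it is the distinguished value at which $g$ matches the expression encoded in $\mathcal{C}_0$. In other words, the set $\mathcal{C}_0$ has been designed precisely so that this Sobolev--logarithmic lower bound, evaluated at the single canonical radius $\sqrt{C_N/b}$, stays positive; Lemma 5.1 is then essentially a restatement of the hypothesis.
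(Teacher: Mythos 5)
Your proof is correct and follows essentially the same route as the paper: bound $I$ below via the logarithmic estimate \eqref{log-3-1} and the Sobolev embedding \eqref{S}, then evaluate the resulting radial function at $\widetilde{r}=(4/(b(N-4)))^{1/2}$, whose value is precisely the quantity assumed positive in $\mathcal{C}_0$. The explicit verification that $g(\widetilde{r})$ equals the expression defining $\mathcal{C}_0$ is a welcome detail the paper leaves implicit.
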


\begin{proof}
For all $u\in H_0^1(\Omega)\backslash \{0\}$, since $b>0$, $\lambda\in \mathbb{R}$ and $\mu<0$,
by \eqref{S} and \eqref{log-3-1}, we have
\begin{align}\label{5-ineq1}
I(u)&=\frac{1}{2}\|u\|^2+\frac{b}{4}\|u\|^4-\frac{\lambda}{2}\|u\|_2^2+\frac{\mu}{2}\|u\|_2^2
-\frac{\mu}{2}\int_{\Omega}u^2\log{u^2}\mathrm{d}x
-\frac{1}{2^{*}}\|u\|_{2^{*}}^{2^{*}}\nonumber\\
&\geq \frac{1}{2}\|u\|^2+\frac{b}{4}\|u\|^4+\frac{\mu}{2}e^{-\frac{\lambda}{\mu}}|\Omega|
-\frac{1}{2^{*}}\frac{1}{S^{\frac{2^*}{2}}}\|u\|^{2^*}.
\end{align}
Set
\begin{align}\label{5-eq2}
h(t)=\frac{1}{2}t^2+\frac{b}{4}t^4+\frac{\mu}{2}e^{-\frac{\lambda}{\mu}}|\Omega|
-\frac{1}{2^{*}}\frac{1}{S^{\frac{2^*}{2}}}t^{2^*}, \ \ \ t\geq0,
\end{align}
and $\widetilde{r}:=\left(\frac{4}{b(N-4)}\right)^{\frac{1}{2}}$. Since $(b,\lambda,\mu)\in \mathcal{C}_0$,
we have $h(\widetilde{r})>0$.
Therefore, $I(u)\geq h(\widetilde{r})>0$ for all $u\in H_0^1(\Omega)$ with $\|u\|=\widetilde{r}$.
The proof is complete.
\end{proof}

Define
$$c_{r_0}:=\inf_{\|u\|\leq r_0}I(u),$$
where $r_0$ is chosen in such a way that $r_0<\widetilde{r}$ and $h(r_0)>0$.
We shall show that $-\infty <c_{r_0}<0$.

\begin{lemma}\label{le5.2}
Suppose that $b>0$, $\lambda\in \mathbb{R}$ and $\mu<0$. Then we have
$$-\infty <c_{r_0}<0.$$
\end{lemma}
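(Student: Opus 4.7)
The strategy is essentially the same as in Lemma \ref{le3.2}(I): bound $I$ below on the closed ball $\overline{B_{r_0}}$ using the logarithmic estimate \eqref{log-3-1} and the Sobolev inequality, and exhibit an explicit nonzero competitor where $I$ is negative.

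For the lower bound, I would start from the inequality \eqref{5-ineq1} obtained in the proof of Lemma \ref{le5.1}, namely
\begin{align*}
I(u)\geq h(\|u\|)+\frac{\mu}{2}e^{-\frac{\lambda}{\mu}}|\Omega|-\frac{\mu}{2}e^{-\frac{\lambda}{\mu}}|\Omega|=h(\|u\|),
\end{align*}
where $h$ is the continuous function defined in \eqref{5-eq2}. Since $h$ is continuous on $[0,r_0]$, it is bounded below there, which immediately yields $c_{r_0}>-\infty$.

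For the upper bound $c_{r_0}<0$, I would fix any $u\in H_0^1(\Omega)\setminus\{0\}$ and consider $I(tu)$ for $t>0$ small. Expanding $\log(tu)^2=\log t^2+\log u^2$ one finds
\begin{align*}
I(tu)=t^{2}\!\left(\tfrac{1}{2}\|u\|^{2}+\tfrac{b}{4}t^{2}\|u\|^{4}-\tfrac{\lambda}{2}\|u\|_{2}^{2}+\tfrac{\mu}{2}\|u\|_{2}^{2}-\tfrac{\mu}{2}\log t^{2}\,\|u\|_{2}^{2}-\tfrac{\mu}{2}\!\int_{\Omega}u^{2}\log u^{2}\,\mathrm{d}x-\tfrac{t^{2^{*}-2}}{2^{*}}\|u\|_{2^{*}}^{2^{*}}\right).
\end{align*}
Because $\mu<0$, the term $-\tfrac{\mu}{2}(\log t^{2})\|u\|_{2}^{2}=-\mu(\log t)\|u\|_{2}^{2}$ tends to $-\infty$ as $t\to 0^{+}$, while all other terms inside the parenthesis remain bounded. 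Hence the bracket is negative for all sufficiently small $t>0$, so $I(tu)<0$. Choosing $t$ small enough that $\|tu\|\leq r_{0}$ gives a point in $\overline{B_{r_0}}$ with negative energy, and therefore $c_{r_0}<0$.

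There is no serious obstacle here: the lower bound is a direct consequence of Lemma \ref{le5.1}'s estimate, and the upper bound rests only on the singular behavior $\lim_{t\to 0^{+}}(-\mu\log t)=+\infty$ of the logarithmic perturbation, which dominates every other term in $I(tu)$ as $t\to 0^{+}$ precisely because $\mu<0$. This mirrors the analogous step for $N=3$ and requires no new ingredients.
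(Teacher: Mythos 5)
Your proposal is correct and follows essentially the same route as the paper: the lower bound comes from the estimate $I(u)\geq h(\|u\|)$ of Lemma \ref{le5.1}, and the upper bound from the expansion of $I(tu)$ and the fact that $-\mu\log t\to-\infty$ as $t\to0^{+}$ when $\mu<0$. In fact your version of the critical-term factor, $\tfrac{t^{2^{*}-2}}{2^{*}}\|u\|_{2^{*}}^{2^{*}}$ inside the bracket, corrects a harmless typo in the paper's displayed expansion.
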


\begin{proof}
It is obvious that $c_{r_0}>-\infty$ by \eqref{5-ineq1} and the
definition of $c_{r_0}$. To show that $c_{r_0}<0$, for any $u\in H_0^1(\Omega)\setminus\{0\}$
and $t>0$, we have
\begin{align*}
I(tu)=t^2\left(\frac{1}{2}\| u\|^{2}+\frac{b}{4}t^2\|u\|^4-\frac{\lambda}{2}\|u\|_{2}^{2}-\frac{\mu}{2}\log t^2\|u\|_{2}^{2}
+\frac{\mu}{2}\int_{\Omega}u^{2}(1-\log u^{2})\mathrm{d}x-\frac{1}{2^*}t^{2^*}\|u\|_{2^*}^{2^*}\right).
\end{align*}
Since $\mu<0$, there exists a $t_u>0$ suitably small such that $\|t_u u\|\leq r_0$ and $I(t_uu)<0$,
which implies that $c_{r_0}<0$. The proof is complete.
\end{proof}

On the basis of Lemmas \ref{le5.1} and \ref{le5.2}, we will prove the compactness of the minimizing
sequence for $c_{r_0}$, which plays a key role in the proof of Theorem \ref{th1.5}.

\begin{lemma}\label{le5.3}
Assume that $(b,\lambda,\mu)\in \mathcal{C}_0$. Then there exists a minimizing sequence
$\{u_n\}$ for $c_{r_0}$ and a $u\in H_0^1(\Omega)$ such that, up to a subsequence,
$u_n \rightarrow u$ in $H_0^1(\Omega)$ as $n\rightarrow\infty$.
\end{lemma}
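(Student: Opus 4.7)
The plan is to parallel the strategy of Lemma~\ref{le3.3}, with special attention paid to the fact that for $N\geq 5$ the critical exponent $2^*<4$ flips the sign of one coefficient in the Br\'ezis--Lieb energy identity. First, since $I(v)\geq h(\|v\|)$ by \eqref{5-ineq1} and \eqref{5-eq2}, and $h(r_0)>0$ by the choice of $r_0<\widetilde{r}$ while $c_{r_0}<0$ by Lemma~\ref{le5.2}, any minimizing sequence $\{v_n\}$ for $c_{r_0}$ is eventually forced to satisfy $\|v_n\|\leq r_0-\tau$ for some $\tau>0$. Applying Ekeland's variational principle (Lemma~\ref{Ekeland}) on the complete metric space $B_{r_0}$ then produces another minimizing sequence $\{u_n\}\subset B_{r_0}$ with $\|u_n\|\leq r_0-\tau/2$ and $I'(u_n)\to 0$ in $H^{-1}(\Omega)$. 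Boundedness yields, up to a subsequence, $u_n\rightharpoonup u$ in $H_0^1(\Omega)$, $u_n\to u$ in $L^p(\Omega)$ for every $p\in[1,2^*)$, and $u_n\to u$ a.e.\ in $\Omega$.

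Set $w_n:=u_n-u$ and $l:=\lim_{n\to\infty}\|w_n\|^2$; I aim to rule out $l>0$. Testing $I'(u_n)\to 0$ against $u$ and against $u_n$, employing Lemma~\ref{2-convergence} for the logarithmic terms, the compact embedding for the $L^2$ and subcritical terms, and Lemma~\ref{Brezis-Lieb} on the $H_0^1$ and $L^{2^*}$ norms, subtraction of the two resulting identities yields
\begin{equation*}
\lim_{n\to\infty}\|w_n\|_{2^*}^{2^*}=l+bl^2+bl\|u\|^2.
\end{equation*}
Feeding the same Br\'ezis--Lieb splittings into $I(u_n)\to c_{r_0}$ and invoking $\tfrac{1}{2}-\tfrac{1}{2^*}=\tfrac{1}{N}$ and $\tfrac{1}{4}-\tfrac{1}{2^*}=\tfrac{4-N}{4N}$, I arrive at the energy identity
\begin{equation*}
c_{r_0}=I(u)+\frac{l}{4N}\bigl[\,4(1+b\|u\|^2)-(N-4)bl\,\bigr].
\end{equation*}

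The main obstacle relative to Lemma~\ref{le3.3} is that the $l^2$ coefficient $\tfrac{4-N}{4N}$ has become negative, so positivity of the bracket is no longer automatic and has to be extracted from the interior bound enforced by Ekeland. Using $\|u_n\|^2=l+\|u\|^2+o(1)\leq (r_0-\tau/2)^2<r_0^2<\widetilde{r}^{\,2}=\tfrac{4}{b(N-4)}$, I obtain
\begin{equation*}
(N-4)bl<4-(N-4)b\|u\|^2\leq 4\leq 4(1+b\|u\|^2),
\end{equation*}
so the bracket is strictly positive whenever $l>0$ and consequently $c_{r_0}>I(u)$. On the other hand, weak lower semicontinuity of the norm gives $\|u\|\leq r_0$, hence $u\in B_{r_0}$ and $I(u)\geq c_{r_0}$ by definition of the infimum, a contradiction. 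Therefore $l=0$, which together with $u_n\rightharpoonup u$ in the Hilbert space $H_0^1(\Omega)$ upgrades the convergence to $u_n\to u$ strongly.
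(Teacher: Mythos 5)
Your proposal is correct and follows essentially the same route as the paper's proof: Ekeland's variational principle to upgrade the minimizing sequence to a $(PS)$-type sequence, the Br\'ezis--Lieb decomposition leading to $c_{r_0}=I(u)+\frac{1}{N}l-\frac{N-4}{4N}bl^2+\frac{1}{N}bl\|u\|^2$, and the bound $l\leq r_0^2<\widetilde{r}^{\,2}=\frac{4}{b(N-4)}$ to keep the bracket positive despite the negative $l^2$ coefficient, yielding the contradiction $c_{r_0}>I(u)\geq c_{r_0}$. Your explicit isolation of why the sign flip for $2^*<4$ is harmless is exactly the point the paper's ``direct calculation'' in \eqref{5-equation1} relies on.
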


\begin{proof}
Let $\{v_n\}\subset B_{r_0}$ be a minimizing sequence of $c_{r_0}$ such that $c_{r_0}\leq I(v_n)\leq c_{r_0}+\dfrac{1}{n}$ for all $n\in\mathbb{N}$.
Since $c_{r_0}<0$, it follows from \eqref{5-ineq1} that there is a constant $\sigma>0$ suitably
small such that $\|v_n\|\leq r_0-\sigma$ for all $n$ large enough. Similar to the proof of Lemma \ref{le3.3},
we can prove that there exists another minimizing sequence $\{u_n\}\subset B_{r_0}$ for $c_{r_0}$ such that
$\lim\limits_{n\rightarrow\infty}I(u_n)=c_{r_0}$ and $\lim\limits_{n\rightarrow\infty}I'(u_n)=0$.

Since $\{u_n\}$ is bounded in $H_0^1(\Omega)$, up to a subsequence, we may assume that there exists $u\in H_0^1(\Omega)$ such that,
as $n\rightarrow\infty$,
\begin{equation}\label{convergence-5}
\begin{cases}
u_{n}\rightharpoonup u \ in \ H_{0}^1(\Omega),\\
u_{n}\rightarrow u \ in \ L^p(\Omega), \ \ 1\leq p<2^*,\\
|u_n|^{2^*-2}u_n\rightharpoonup |u|^{2^*-2}u \ in \ L^{\frac{2^*}{2^*-1}}(\Omega),\\
u_{n}\rightarrow u \ \ a.e.\ in \ \Omega.
\end{cases}
\end{equation}
We claim that $u_n\rightarrow u$ in $H_0^1(\Omega)$ as $n\rightarrow \infty$. For this purpose, we set $w_n=u_n-u$.  Obviously, $\{w_n\}$ is also a bounded sequence in $H_0^1(\Omega)$. So there exists a subsequence of $\{w_{n}\}$ (still denoted by $\{w_{n}\}$) such that
\begin{align}\label{wn-5-1}
\lim\limits_{n\rightarrow\infty}\|w_n\|^2=l\geq0.
\end{align}
Using the fact $u_n\rightharpoonup u$ in $H_0^1(\Omega)$ as $n\rightarrow\infty$, we have
\begin{align}\label{wn-5-3}
\|u_n\|^{2}=\| w_n\|^2+\| u\|^{2}+o_n(1), \qquad  n\rightarrow\infty.
\end{align}
Since $I'(u_n)\rightarrow0$ as $n\rightarrow\infty$, we can deduce from \eqref{2-log-convergence2}, \eqref{convergence-5} and \eqref{wn-5-3} that, as $n\rightarrow \infty$,
\begin{align}\label{Iun-u}
o_n(1)=&\langle I'(u_n),u\rangle\nonumber\\
=&\left(1+b\|u_n\|^2\right)\int_{\Omega}\nabla u_n\nabla u\mathrm{d}x-\lambda\int_{\Omega}u_n u\mathrm{d}x-\mu\int_{\Omega}u_n u\log u_n^2\mathrm{d}x-
\int_{\Omega}|u_n|^{2^*-2}u_n u\mathrm{d}x\nonumber\\
=&\|u\|^2+b\|u\|^4+b\|w_n\|^2\|u\|^2-\lambda\|u\|_2^2
-\mu\int_{\Omega}u^2\log u^2\mathrm{d}x-\|u\|_{2^*}^{2^*}+o_n(1).
\end{align}
Combining $I'(u_n)\rightarrow0$ with the boundedness of $\{u_n\}$ and recalling \eqref{2-log-convergence1}, \eqref{convergence-5}, \eqref{wn-5-3}, \eqref{Iun-u} and Lemma \ref{Brezis-Lieb},
we have, as $n\rightarrow \infty$,
\begin{align}\label{Iun-un}
o_n(1)=&\langle I'(u_n),u_n\rangle\nonumber\\
=&\left(1+b\|u_n\|^2\right)\|u_n\|^2-\lambda \|u_n\|_2^2
-\mu\int_{\Omega}u_n^2\log u_n^2\mathrm{d}x-\|u_n\|_{2^*}^{2^*}\nonumber\\
=&\|w_n\|^2+\|u\|^2+b\|w_n\|^4+2b\|w_n\|^2\|u\|^2+b\|u\|^4-\lambda\|u\|_2^2
-\mu\int_{\Omega}u^2\log u^2\mathrm{d}x\nonumber\\
&-\|w_n\|_{2^*}^{2^*}-\|u\|_{2^*}^{2^*}+o_n(1)\nonumber\\
=&\|w_n\|^2+b\|w_n\|^4+b\|w_n\|^2\|u\|^2-\|w_n\|_{2^*}^{2^*}+o_n(1).
\end{align}
Suppose that $l>0$. Since $I(u_n)\rightarrow c_{r_0}$ as $n\rightarrow\infty$, it follows from \eqref{2-log-convergence1},
\eqref{convergence-5}, \eqref{wn-5-3},  \eqref{Iun-un} and Lemma \ref{Brezis-Lieb} that, as $n\rightarrow \infty$,
\begin{align}\label{Iun-Iu}
c_{r_0}+ o_n(1)=I(u_n)=&\frac{1}{2}\|u_n\|^2+\frac{b}{4}\|u_n\|^4-\frac{\lambda}{2}\|u_n\|_2^2+\frac{\mu}{2}\|u_n\|_2^2
-\frac{\mu}{2}\int_{\Omega}u_n^2\log{u_n^2}\mathrm{d}x-\frac{1}{2^*}\|u_n\|_{2^*}^{2^*}\nonumber\\
=&\frac{1}{2}\|w_n\|^2+\frac{1}{2}\|u\|^2+\frac{b}{4}\|w_n\|^4+\frac{b}{4}\|u\|^4+\frac{b}{2}\|w_n\|^2\|u\|^2
-\frac{\lambda}{2}\|u\|_2^2\nonumber\\
&+\frac{\mu}{2}\|u\|_2^2-\frac{\mu}{2}\int_{\Omega}u^2\log{u^2}\mathrm{d}x-\frac{1}{2^*}\|w_n\|_{2^*}^{2^*}
-\frac{1}{2^*}\|u\|_{2^*}^{2^*}+o_n(1)\nonumber\\
=&I(u)+\frac{1}{2}\|w_n\|^2+\frac{b}{4}\|w_n\|^4+\frac{b}{2}\|w_n\|^2\|u\|^2-\frac{1}{2^*}\|w_n\|_{2^*}^{2^*}
+o_n(1)\nonumber\\
=&I(u)+\frac{1}{2^*}\langle I'(u_n),u_n\rangle+\left(\frac{1}{2}-\frac{1}{2^*}\right)\|w_n\|^2
-\left(\frac{1}{2^*}-\frac{1}{4}\right)b\|w_n\|^4\nonumber\\
&+\left(\frac{1}{2}-\frac{1}{2^*}\right)b\|w_n\|^2\|u\|^2+o_n(1)\nonumber\\
=&I(u)+\left(\frac{1}{2}-\frac{1}{2^*}\right)\|w_n\|^2
-\left(\frac{1}{2^*}-\frac{1}{4}\right)b\|w_n\|^4\nonumber\\
&+\left(\frac{1}{2}-\frac{1}{2^*}\right)b\|w_n\|^2\|u\|^2+o_n(1).
\end{align}
Letting $n\rightarrow\infty$ in \eqref{Iun-Iu} and applying \eqref{wn-5-1}, one has
\begin{align}\label{equation1}
c_{r_0}=I(u)+\left(\frac{1}{2}-\frac{1}{2^*}\right)l
-\left(\frac{1}{2^*}-\frac{1}{4}\right)bl^2
+\left(\frac{1}{2}-\frac{1}{2^*}\right)bl\|u\|^2.
\end{align}
By the weak lower semi-continuity of the norm, we have $\|u\|\leq r_0$,
which, together with \eqref{wn-5-1} and \eqref{wn-5-3}, guarantees
\begin{equation}\label{l-upper}
0< l\leq r_0^2<\widetilde{r}^2=\frac{4}{b(N-4)}.
\end{equation}
Since $2<2^*<4$, by a direct calculation,
we can deduce from \eqref{l-upper} that
\begin{align}\label{5-equation1}
c_{r_0}=I(u)+\left(\frac{1}{2}-\frac{1}{2^*}\right)l
-\left(\frac{1}{2^*}-\frac{1}{4}\right)bl^2
+\left(\frac{1}{2}-\frac{1}{2^*}\right)bl\|u\|^2>I(u),
\end{align}
a contradiction. Therefore, $l=0$, i.e., $u_n\rightarrow u$ in $H_0^1(\Omega)$ as
$n\rightarrow\infty$. This completes the proof.
\end{proof}

{\bf The proof of Theorem \ref{th1.5}.}
Suppose that $N\geq 5$ and $(b,\lambda,\mu)\in \mathcal{C}_0$.
From Lemma \ref{le5.3} and the fact that $I(u)$ is a $C^1$ functional in $H_0^1(\Omega)$,
one obtains that $u$ is a local minimum solution to problem \eqref{eq1} with $I(u)<0$.

In addition to the assumptions of Theorem \ref{th1.5}, suppose that
$b\geq b^*=\frac{2}{N-2}(\frac{N-4}{N-2})^{\frac{N-4}{2}}S^{-\frac{N}{2}}$.
Direct calculation shows that $h'(t)\geq0$ on $[0,+\infty)$, which, together with $h(r_0)>0$,
implies that $I(u)\geq h(\|u\|)>0$ for $\|u\|\geq r_0$, where $h(t)$ is given by \eqref{5-eq2}.
Consequently, the local minimum solution
$u$ with negative energy must be a ground state solution.
This completes the proof of Theorem \ref{th1.5}.

\vskip5mm

{\bf Declaration}\\
No potential competing interest was reported by the authors.

{\bf Data availability}\\
No data was used for the research described in the article.

{\bf Acknowledgements}\\
The authors would like to express their sincere gratitude to Professor Wenjie Gao in Jilin University for his enthusiastic guidance and constant encouragement.

\end{document}